\def\newaliasedtheorem#1[#2]#3{
	\newaliascnt{#1@alt}{#2}
	\newtheorem{#1}[#1@alt]{#3}
	\expandafter\newcommand\csname #1@altname\endcsname{#3}
}
\numberwithin{equation}{section}
\newtheoremstyle{slanted}{\topsep}{\topsep}{\slshape}{}{\bfseries}{.}{.5em}{}
\theoremstyle{plain}
\newtheorem{theorem}{Theorem}[section]
\theoremstyle{definition}
\theoremstyle{remark}
\newcommand{\N}{\mathbb{N}}
\newcommand{\setR}{\mathbb{R}}
\newcommand{\R}{\mathbb{R}}
\newcommand{\Z}{\mathbb{Z}}
\newcommand{\eps}{\varepsilon}
\renewcommand{\epsilon}{\varepsilon}
\renewcommand{\phi}{\varphi}
\renewcommand{\tilde}{\widetilde}
\newcommand{\di}{\mathop{}\!\mathrm{d}}
\newcommand{\diam}{{\rm diam}}
\DeclareMathOperator{\Ric}{Ric}
\newcommand{\haus}{\mathscr{H}}
\newcommand{\dist}{\mathsf{d}}
\newcommand{\meas}{\mathfrak{m}}
\DeclareMathOperator{\CD}{CD}
\DeclareMathOperator{\RCD}{RCD}
\newfont{\tmpf}{cmsy10 scaled 2500}
\def\XXint#1#2#3{{\setbox0=\hbox{$#1{#2#3}{\int}$ }
		\vcenter{\hbox{$#2#3$ }}\kern-.6\wd0}}
\begin{document}

\title[]{The large scale structure of complete $4$-manifolds with nonnegative Ricci curvature and Euclidean volume growth}

\author[]{Daniele Semola}

\address{\parbox{\linewidth}{University of Vienna, Faculty of Mathematics\\
	 Oskar-Morgenstern-Platz 1,
    1090 Wien -- Austria\\[-4pt]\phantom{a}}}
\email{daniele.semola@univie.ac.at}

\begin{abstract}
We survey the implications of our joint work with E. Bru\`e and A. Pigati \cite{BruePigatiSemola} on the structure of blow-downs for a smooth, complete, Riemannian $4$-manifold with nonnegative Ricci curvature and Euclidean volume growth. Very imprecisely, any such manifold looks like a cone over a spherical space form at infinity. We present some open questions and discuss possible future directions along the way. 
\end{abstract}

\maketitle


\section{Introduction}\label{sec:Intro}

In the recent \cite{BruePigatiSemola}, joint work with E. Bru\`e  and A. Pigati, we obtained some topological regularity and stability results for noncollapsed Ricci limit spaces and $\RCD$ metric measure spaces, i.e., Riemannian metric measure spaces with Ricci curvature bounded from below in the synthetic sense (see \cite{AmbrosioICM,SturmECM,GigliSurvey} for some background). This survey aims to review the implications of \cite{BruePigatiSemola} on the large-scale structure of a smooth, complete Riemannian $4$-manifold $(M^4,g)$ with nonnegative Ricci curvature and Euclidean volume growth. All Riemannian manifolds will be assumed to be smooth and complete in this note unless otherwise stated. 

Some familiarity with the basics of the theory of Ricci limit spaces, as covered in J. Cheeger's notes \cite{CheegerFermi}, might help the reader. Most arguments will be only sketched, as we try to balance precision with informality.
\medskip

We recall that $(M^4,g)$ with $\Ric\ge 0$ is said to have \textbf{Euclidean volume growth} provided that there exists $c>0$ such that for some (and hence for every) $p\in M^4$ it holds
\begin{equation}\label{eq:EVG}
\frac{\mathrm{vol}(B_r(p))}{r^4}\ge c\quad \text{for all $r>0$}\, .
\end{equation}
By the Bishop-Gromov volume monotonicity, \eqref{eq:EVG} is equivalent to
\begin{equation}
\lim_{r\to \infty}\frac{\mathrm{vol}(B_r(p))}{r^4}\in [c,\omega_4]\, ,
\end{equation}
where we denote by $\omega_4$ the volume of the unit ball $B_1(0)\subset \R^4$. 

\medskip
We shall see that for every such $(M^4,g)$, there exists a spherical space form $S^3/\Gamma$ such that every \textbf{blow-down} of $(M^4,g)$ is a cone with \textbf{cross-section} $\approx S^3/\Gamma$. The symbol ``$\approx$'' will stand for ``homeomorphic'' hereafter. 

We find it helpful to mention a result obtained by J. Cheeger and A. Naber for Ricci flat $4$-manifolds satisfying \eqref{eq:EVG} around ten years ago in \cite{CheegerNaber15}. This should provide some intuition before we give a precise statement. 

\begin{theorem}\label{thm:ChNaALE}{\cite[Corollary 8.85]{CheegerNaber15}} Let $(M^4,g)$ be a Ricci-flat $4$-manifold satisfying \eqref{eq:EVG}. 
There exists a finite group $\Gamma<\mathrm{O}(4)$ acting freely on $S^3$ such that $(M^4,r^{-2}g,p)\to (\R^4/\Gamma,g_{\rm{eucl}},o)$ as $r\to \infty$ in the pointed Gromov-Hausdorff sense and in $C^{\infty}_{{\rm loc}}$ away from $p$ and $o$. 
\end{theorem}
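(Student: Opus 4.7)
The plan is to extract a blow-down by Gromov precompactness, identify it as a metric cone via Cheeger-Colding, recognise the cross-section as a smooth spherical space form using codimension-$4$ and $\epsilon$-regularity, and finally invoke a uniqueness result to promote subsequential convergence to full convergence. In detail, Bishop-Gromov together with $\Ric\ge 0$ makes $\{(M^4, r^{-2}g, p)\}_{r\ge 1}$ pointed Gromov-Hausdorff precompact, and \eqref{eq:EVG} guarantees that any subsequential limit $(X_\infty,\dist_\infty,o_\infty)$ along $r_j\to\infty$ is non-collapsed. The monotone quantity $r\mapsto \mathrm{vol}(B_r(p))/r^4$ converges to some $V_\infty\in[c,\omega_4]$, which transfers to $\haus^4(B_r(o_\infty))/r^4 \equiv V_\infty$ on $X_\infty$. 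The non-collapsed volume-cone-implies-metric-cone theorem of Cheeger-Colding then yields $X_\infty=C(Z)$ for some compact length space $Z$.

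Next, the codimension-$4$ theorem of Cheeger-Naber, applied to the (trivially) Einstein rescalings $(M^4,r_j^{-2}g)$, passes to the non-collapsed limit and forces the singular set of $C(Z)$ to have Hausdorff dimension $0$. Cone invariance concentrates this at the vertex, so $Z$ is a smooth closed Riemannian $3$-manifold. Ricci-flatness of $C(Z)$ away from the vertex is equivalent to $Z$ being Einstein with $\Ric_Z = 2g_Z$; in dimension three this is equivalent to constant sectional curvature $+1$, and Killing-Hopf identifies $Z\approx S^3/\Gamma$ for some finite $\Gamma<\mathrm{O}(4)$ acting freely on $S^3$. Hence the blow-down is metrically $\R^4/\Gamma$. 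Anderson's $\epsilon$-regularity for Einstein manifolds, combined with elliptic bootstrapping in harmonic coordinates, upgrades the pGH convergence to $C^\infty_{\loc}$ away from $p$ and $o$.

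The argument so far yields convergence only along subsequences, so a priori $\Gamma$ -- or, for fixed $\Gamma$, even the isometry class of $Z$ when lens-space moduli are present -- could depend on the chosen sequence. Removing this ambiguity is the main obstacle, and it is handled by Cheeger-Tian's uniqueness theorem for the tangent cone at infinity of Ricci-flat $4$-manifolds with Euclidean volume growth whose tangent cone has an isolated singularity. Their proof rests on an Adams-Simon/\L{}ojasiewicz inequality for the Einstein equation linearised about a Ricci-flat cone with smooth link, combined with Bishop-Gromov monotonicity, to exclude drift of the cross-section along the logarithmic time $\tau=\log r$. The smoothness of the link $S^3/\Gamma$ established in the previous step is precisely the hypothesis that makes this applicable, and uniqueness of the limit delivers the full $r\to\infty$ convergence in both the pGH and $C^\infty_{\loc}$ senses.
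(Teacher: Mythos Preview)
Your argument is correct and follows the paper's route closely through the identification of each blow-down as a flat cone $\R^4/\Gamma$: Gromov precompactness, Cheeger--Colding metric cone structure, the codimension-$4$ singularity estimate to see that the cross-section is smooth, the Einstein condition forcing constant curvature $+1$, and Anderson's $\varepsilon$-regularity for $C^\infty_{\loc}$ convergence away from the vertex.

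The genuine divergence is in the uniqueness step. You invoke the Cheeger--Tian \L{}ojasiewicz--Simon machinery for Einstein metrics near a Ricci-flat cone with smooth link. This works, but the paper (following Cheeger--Naber) uses a far lighter observation. The set $\mathcal{C}_\infty$ of cross-sections of blow-downs is \emph{connected} in the Gromov--Hausdorff topology (a general fact for asymptotic cones, requiring only Gromov compactness), and by volume convergence every element of $\mathcal{C}_\infty$ has the same $\haus^3$-measure. You have already shown that every element is a round spherical space form of curvature $\equiv 1$; these form a \emph{discrete} set in the Gromov--Hausdorff topology (indeed finitely many isometry classes at each fixed volume). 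A connected subset of a discrete set is a point, so the blow-down is unique. This bypasses any analysis of the Einstein linearisation entirely. Your worry about ``lens-space moduli'' is also slightly misplaced: for a fixed conjugacy class $\Gamma<\mathrm{O}(4)$ the round metric on $S^3/\Gamma$ is unique, so the only ambiguity is in $\Gamma$ itself, and that is what connectedness plus discreteness resolves.
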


For later purposes, it is convenient to view $(\R^4/\Gamma,g_{\rm{eucl}},o)$ as the cone over $S^3/\Gamma$, where $S^3/\Gamma$ is endowed with the round metric with constant curvature $\equiv 1$. Our goal below is to illustrate which aspects of \autoref{thm:ChNaALE} continue to hold and which ones fail when the assumption $\Ric\equiv 0$ is weakened to $\Ric\ge 0$.
\medskip

Let $(M^4,g)$ have $\Ric\ge 0$ and satisfy \eqref{eq:EVG}. By M. Gromov's precompactness theorem, for any sequence $r_i\to \infty$, up to the extraction of a subsequence that we do not relabel, $(M^4,r_i^{-2}g,p)\to (Y,\dist_Y,q)$ in the pointed Gromov-Hausdorff sense (from now on abbreviated as ${\rm pGH}$), where $(Y,\dist_Y,q)$ is a complete and pointed metric space. Any such metric space is called a blow-down of $(M^4,g)$. Note that neither the dimension nor the Euclidean volume growth \eqref{eq:EVG} play any role for the moment.\\ 
Without further assumptions, the metric structure of blow-downs is not fully understood. On the other hand, if the manifold has Euclidean volume growth, Cheeger and T.-H. Colding proved in \cite[Theorem 7.6]{CheegerColding96} that every blow-down is a \textbf{metric cone}. More precisely, there exists a compact metric space $(Z,\dist_Z)$ (the cross-section of the cone) with $\rm{diam}(Z)\le \pi$ such that 
\begin{equation}
Y=[0,+\infty)\times Z/_{\{0\}\times Z}\, ,
\end{equation} 
and for every $(r_1,z_1),(r_2,z_2)\in Y$ it holds
\begin{equation}\label{eq:dC}
\dist_Y^2\left((r_1,z_1),(r_2,z_2)\right)=r_1^2+r_2^2-2r_1r_2\cos(\dist_Z(z_1,z_2))\, .
\end{equation}

\begin{remark}
An alternative proof can be obtained by exploiting the volume convergence \cite[Theorem 5.4]{CheegerColding97I}, the stability of the $\RCD(0,n)$ condition under pointed measured Gromov-Hausdorff convergence \cite{GigliMondinoSavare}, and the volume-cone implies metric cone for $\RCD$ spaces proved by G. De Philippis and N. Gigli in \cite[Theorem 1.1]{DePhilippisGigli}.
\end{remark}

\begin{remark}
Under the simplifying assumption that the cross-section $(Z,\dist_Z)$ is a smooth Riemannian manifold with metric $g_Z$, the cone distance in \eqref{eq:dC} is the distance on the completion of the smooth Riemannian metric $dr^2+r^2g_Z$ on $(0,+\infty)\times Z$.
\end{remark}

In \cite{Perelmannonunique}, G. Perelman constructed a manifold $(M^4,g)$ with $\Ric\ge0$ and Euclidean volume growth \eqref{eq:EVG} whose blow-down is not unique. This is the first fundamental difference with the Ricci flat case. The cross-sections of all blow-downs in Perelman's example are Berger spheres, i.e., smooth $S^3$'s endowed with a left-invariant Riemannian metric where one of the three directions in the Lie algebra is shrunk with respect to the others. \\ 
Let 
\begin{equation*}
\mathcal{C}_{\infty}:=\{(Z,\dist_Z)\, :\, (Z,\dist_Z) \, \text{is the cross-section of a blow-down of $(M^4,g)$} \}\, .
\end{equation*} 
Then $\mathcal{C}_{\infty}$ is compact and connected with respect to the Gromov-Hausdorff topology. This is a corollary of Gromov's compactness theorem, see for instance \cite[Theorem 4.2]{CheegerJiangNaber} for the details. Moreover, by volume convergence \cite[Theorem 5.4]{CheegerColding97I}, the $3$-dimensional Hausdorff measure $\mathcal{H}^3$ is constant (and finite) on $\mathcal{C}_{\infty}$. 
\medskip

As will be clear later, the smoothness of all the elements of $\mathcal{C}_{\infty}$ is generally not guaranteed.\footnote{If $(M^4,g)$ has quadratic Riemann curvature decay, as in Perelman's example, any $(Z,\dist_Z)$ is a $C^{1,\alpha}$ Riemannian manifold, for every $\alpha<1$.} However, under this extra condition, each blow-down $C(Z)$ of $(M^4,g)$ has $\Ric\ge 0$ in the smooth part. This is quite reasonable to expect but completely non-trivial to check.
An elementary computation shows this holds if and only if $\Ric_Z\ge 2$. By R. Hamilton's \cite[1.1 Main Theorem]{Hamilton}, any cross-section must be diffeomorphic to a spherical space form. Moreover, by Cheeger and Colding's stability \cite[Theorem A.1.3]{CheegerColding97I}, the diffeomorphism type is constant on $\mathcal{C}_{\infty}$. This is a completely non-trivial statement, as $\mathcal{C}_{\infty}$ is only known to be connected with respect to the Gromov-Hausdorff topology. 

\medskip

With the help of the $\RCD$ theory, we can justify the first assertions above, althoug only in a weak sense, without any smoothness assumption. Indeed, any smooth $n$-dimensional Riemannian manifold with $\Ric\ge 0$ is an $\RCD(0,n)$ metric measure space when endowed with its volume measure. Along any ${\rm pGH}$ converging sequence $(M^4,r_i^{-2}g,p)\to (C(Z),\dist_{C(Z)},o)$, the volume measures converge to the Hausdorff measure of the limit cone $\mathcal{H}^4$ by \cite[Theorem 5.4]{CheegerColding97I}. Moreover, it is not difficult to check that $\mathcal{H}^4=r^3dr\mathcal{H}^3_Z$ on $C(Z)$. By the stability of the $\RCD(0,n)$ condition under pointed measured Gromov-Hausdorff convergence, $(C(Z),\dist_{C(Z)},\mathcal{H}^4)$ is an $\RCD(0,4)$ space. By C. Ketterer's \cite[Theorem 1.1]{Ketterer15}, $(Z,\dist_Z,\mathcal{H}^3)$ is an $\RCD(2,3)$ space. 

\begin{remark}
Strictly speaking, Ketterer's theorem implies that $(Z,\dist_Z,\mathcal{H}^3)$ is an $\RCD^*(2,3)$ space. To infer that it is an $\RCD(2,3)$ space, one must rely on F. Cavalletti and E. Milman's \cite{CavallettiMilman}. The distinction between $\RCD$ and $\RCD^*$ spaces, which was present in most of the early papers on the theory, does not play any role for the purpose of this note, and the non-expert reader is encouraged to ignore it.
\end{remark}

As we shall clarify below, a straightforward combination of the main results obtained jointly with Bru\`e and Pigati in \cite{BruePigatiSemola} (see in particular the comment right after \cite[Theorem 1.1]{BruePigatiSemola}) yields:

\begin{theorem}\label{thm:mainIntro}
Let $(M^4,g)$ be smooth, complete, with $\Ric\ge 0$ and satisfying \eqref{eq:EVG}. There exists a finite group $\Gamma<\mathrm{O}(4)$ acting freely on $S^3$ such that for every cross-section of some blow-down $(Z,\dist_Z)\in \mathcal{C}_{\infty}$, $(Z,\dist_Z,\mathcal{H}^3)$ is an $\RCD(2,3)$ space with $Z\approx S^3/\Gamma$. 
\end{theorem}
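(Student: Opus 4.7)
The plan is to combine the $\RCD(2,3)$ structure of each cross-section, which is already justified in the paragraph preceding the statement, with the topological regularity and topological stability results for noncollapsed Ricci limits in dimension $4$ proved in \cite{BruePigatiSemola}. Concretely, I would split the argument into three ingredients: first, that each $(Z,\dist_Z,\mathcal{H}^3)$ is a compact $\RCD(2,3)$ space (already established above via volume convergence, stability of $\RCD(0,n)$ under pmGH convergence, Ketterer's \cite[Theorem 1.1]{Ketterer15}, and Cavalletti--Milman \cite{CavallettiMilman}); second, that each such $Z$ is homeomorphic to a spherical space form $S^3/\Gamma_Z$ with $\Gamma_Z<\mathrm{O}(4)$ acting freely on $S^3$; and third, that $\Gamma_Z$ is independent of $Z\in \mathcal{C}_{\infty}$.

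For the second ingredient, I would apply the topological regularity for noncollapsed $4$-dimensional Ricci limits from \cite{BruePigatiSemola}. Each blow-down $(C(Z),\dist_{C(Z)},\mathcal{H}^4)$ is a noncollapsed $\RCD(0,4)$ limit of the smooth manifolds $(M^4,r_i^{-2}g,p)$, so the topological structure theorems apply: the cone is a topological $4$-manifold at its tip, and topological $4$-manifold cones of this type are precisely $\R^4/\Gamma$ with $\Gamma<\mathrm{O}(4)$ acting freely on $S^3$, forcing $Z\approx S^3/\Gamma_Z$. The third ingredient is then a consequence of the topological stability statement from \cite{BruePigatiSemola}, which asserts that homeomorphism type is preserved along pGH converging sequences of noncollapsed cross-sections of this form. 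Hence $Z\mapsto [\Gamma_Z]$ is locally constant on $\mathcal{C}_{\infty}$, and since $\mathcal{C}_{\infty}$ is compact and connected in the GH topology by \cite[Theorem 4.2]{CheegerJiangNaber}, it is globally constant.

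The main obstacle is the second ingredient. A general compact $\RCD(2,3)$ space need not be a topological manifold, so the conclusion genuinely relies on $C(Z)$ arising as the blow-down of a smooth $4$-manifold with $\Ric\ge 0$: this is what allows \cite{BruePigatiSemola} to promote the synthetic curvature bound to a manifold structure and to identify the link as a spherical space form. One can view this as a synthetic, topological counterpart of Hamilton's classification of smooth closed $3$-manifolds with positive Ricci curvature, coupled with the topological stability needed to propagate the local conclusion across the connected moduli space $\mathcal{C}_{\infty}$.
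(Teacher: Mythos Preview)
Your three-step outline matches the paper's, and your first and third ingredients are essentially what the paper does. The genuine gap is in your second ingredient.

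You assert that $C(Z)$ is ``a topological $4$-manifold at its tip'' and then classify such cones as $\R^4/\Gamma$. This is wrong on two counts. First, $\R^4/\Gamma$ with $\Gamma$ acting freely on $S^3$ is \emph{not} a topological $4$-manifold at the origin unless $\Gamma$ is trivial: the link of the cone point is $S^3/\Gamma$, not $S^3$. So if $C(Z)$ were a manifold at the tip you would be forced to conclude $Z\approx S^3$, contradicting the examples in Section~\ref{subsec:alltop}. Second, and more to the point, \cite{BruePigatiSemola} proves no $4$-dimensional manifold recognition theorem; Remark~\ref{rm:nomanrec4} explains that any such statement based on blow-ups fails for $n\ge 4$.

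The paper's route for the second ingredient works one dimension down, on $Z$ rather than on $C(Z)$. One shows that $Z$ is a topological $3$-manifold by invoking the manifold recognition \autoref{thm:manrecRCD}, whose hypothesis asks that every blow-up of $(Z,\dist_Z)$ have cross-section homeomorphic to $S^2$. This is checked as follows: if $C(W)$ is a blow-up of $Z$ at $z$, then $\R\times C(W)$ is a blow-up of the noncollapsed Ricci limit $C(Z)$ at $(1,z)$, and \autoref{thm:noRP2BPS} (the ``no $C(\mathbb{RP}^2)$'' theorem) forces $W\approx S^2$. Once $Z$ is a closed topological $3$-manifold, the identification $Z\approx S^3/\Gamma$ comes from the generalized Bonnet--Myers theorem for $\CD$ spaces (compactness), Mondino--Wei \cite{MondinoWei} (finite $\pi_1$), and Perelman's resolution of the elliptization conjecture---not from any $4$-dimensional cone classification.
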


There are three upshots for \autoref{thm:mainIntro}:
\begin{itemize}
\item[i)] The cross-section of every blow-down is a topological manifold;
\item[ii)] The possible topologies of the cross-sections are restricted;
\item[iii)] For a fixed $(M^4,g)$ the homeomorphism type is unique on $\mathcal{C}_{\infty}$. 
\end{itemize}
We will discuss in Section \ref{sec:sharp} in which sense \autoref{thm:mainIntro} is sharp. The focus of Section \ref{sec:previous} will be on some related previous developments. In Section \ref{sec:proof}, we will explain how to obtain \autoref{thm:mainIntro} from the main results in \cite{BruePigatiSemola} and outline the main ideas that enter into the proofs. In Section \ref{sec:OpenQuest} we will discuss some related open questions and possible future directions.
\medskip

\textbf{Acknowledgements.} I am grateful to Elia Bru\`e and Alessandro Pigati for the fruitful collaboration resulting in \cite{BruePigatiSemola}. I wish to thank Shouhei Honda, Christian Lange, Lucas Lavoyer, Chao Li, Alexander Lytchak, Tristan Ozuch, Marco Pozzetta, Dušan Repovš, Stephan Stadler, Burkhard Wilking, and Shengxuan Zhou for some interesting conversations on the topics of \cite{BruePigatiSemola}, for pointing out some useful references, and for helpful feedback on a preliminary version of this note.\\
This survey is an extended version of my two lectures on the occasion of the \emph{School and Conference on Metric Measure Spaces, Ricci Curvature, and Optimal Transport}, at Villa Monastero, Lake Como, in September 2024. I am grateful to the organizers for putting together such a nice event and the invitation to speak. 

\section{Sharpness of \autoref{thm:mainIntro}}\label{sec:sharp}
The goal of this section is to discuss in which sense \autoref{thm:mainIntro} is sharp and in which sense it might not.

\subsection{Realizing all possible topologies}\label{subsec:alltop}
In the recent \cite{Zhoucones}, S. Zhou constructed examples of $(M^4,g)$ with $\Ric\ge 0$ and Euclidean volume growth asymptotic to $C(S^3_{\delta}/\Gamma)$ for every finite $\Gamma<\mathrm{O}(4)$ acting freely on $S^3$. Here, $0<\delta=\delta(\Gamma)\le 1$ denotes the radius of $S^3$, which we assume to be endowed with a round metric with constant curvature. This result shows that every admissible topology for the cross-section of some blow-down according to \autoref{thm:mainIntro} can indeed arise. 

Several of the spherical space forms are known to appear as cross-sections of blow-downs in the Ricci flat context considered in \autoref{thm:ChNaALE}. For $\Gamma<\mathrm{SU}(2)$, this is due to P. Kronheimer in \cite{Kronheimer}, after some earlier important contributions by N. Hitchin, and G. Gibbons and S. Hawking. More recently, I. Suvaina \cite{Suvaina} and E. P. Wright \cite{Wright} added some cyclic subgroups of $\mathrm{O}(4)$ to the list by considering some quotients of Kronheimer's examples. In the Ricci flat case, it is an open question whether all the spherical space forms can arise as cross-sections of blow-downs. On the other hand, M. de Borbon and C. Spotti have shown in \cite{deBorbonSpotti} that all spherical space forms can arise in the context of Kähler Ricci-flat metrics with cone singularities. It seems conceivable to the author that the examples constructed in \cite{deBorbonSpotti} are $\RCD(0,4)$ spaces.
\medskip

The starting point in \cite{Zhoucones} is the observation that any finite subgroup of $\mathrm{O}(4)$ acting freely on $S^3$ is conjugate to a subgroup of $\mathrm{U}(2)$ in $\mathrm{O}(4)$, see e.g. \cite[Chapter 4.4]{Thurstonman}. It is then possible to construct examples asymptotic to $C(S^3_{\delta}/\Gamma)$ so that the underlying manifold $M^4$ is diffeomorphic to the minimal resolution of $\mathbb{C}^2/\Gamma$, see \cite[Section 2]{Zhoucones} for the relevant background. Moreover, for any given $\Gamma$, there exist infinitely many non-homotopically equivalent $(M^4,g)$ such that the cross-section of the blow-down is homeomorphic to $S^3/\Gamma$, see the remark after the proof of \cite[Theorem 1.1]{Zhoucones}.

\subsection{Realizing all possible metrics}\label{sec:Realization4d}
Recall that the space of Riemannian metrics $g$ with $\Ric\ge 2$ modulo isometries on a given $3$-manifold is path-connected with respect to the topology induced by Cheeger-Gromov convergence, thanks to \cite{Hamilton}. One can combine this statement with the methods developed by Colding and Naber in \cite{ColdingNabercones}, see for instance \cite[Section 4]{Zhoucones} for a similar argument, to prove the following: if $g$ is a smooth Riemannian metric with $\Ric\ge 2$ on a spherical space form $S^3/\Gamma$, then $(S^3/\Gamma,g)$ can be the cross-section of a blow-down of $(M^4,g)$ as in the assumptions of \autoref{thm:mainIntro}, up to possibly shrinking the metric. By pushing this observation further and relying on M. Simon's smoothing results from \cite{Simon14}, one gets:

\begin{proposition}\label{prop:Riclim}
Let $(S^3/\Gamma)$ be a spherical space form and let $g_i$ be smooth Riemannian metrics on $(S^3/\Gamma)$ such that $\Ric_i\ge 2$ and $\mathrm{vol}_i(S^3/\Gamma)>v>0$  for each $i\in \mathbb{N}$. Assume that 
\begin{equation}
\left(S^3/\Gamma,\dist_{g_i}\right)\xrightarrow{\mathrm{GH}} (S^3/\Gamma,\dist)\, ,\quad \text{as $i\to \infty$} \, .
\end{equation}
Then there exist $\delta\in (0,1]$ and a smooth $(M^4,g)$ with $\Ric\ge 0$ and Euclidean volume growth with a blow-down isometric to $C((S^3/\Gamma,\delta \dist))$. 
\end{proposition}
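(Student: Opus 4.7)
The plan is to combine Simon's three-dimensional Ricci flow smoothing of the Gromov--Hausdorff limit with the Colding--Naber warped-product cone construction (in the form used by Zhou in \cite{Zhoucones}). The role of $\delta\in(0,1]$ is to create the strict Ricci slack $\Ric>2$ on the cross-section that the Colding--Naber method requires when the cross-section is allowed to vary with $r$.

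I would first invoke Simon's smoothing \cite{Simon14}: since the $g_i$ are smooth with $\Ric(g_i)\geq 2$ and $\mathrm{vol}_{g_i}(S^3/\Gamma)>v$, the Ricci flows $g_i(\cdot)$ with initial data $g_i$ exist on a uniform time interval $(0,T]$ with interior $C^k_{\mathrm{loc}}$ estimates independent of $i$. Passing to a subsequential limit yields a smooth Ricci flow $\bar g(\cdot)$ on $(0,T]$ with $(S^3/\Gamma,\dist_{\bar g(t)})\xrightarrow{\mathrm{GH}}(S^3/\Gamma,\dist)$ as $t\to 0^+$ and $\Ric(\bar g(t))\geq 2-\eta(t)$ for some $\eta(t)\to 0^+$. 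Fix $\delta\in(0,1)$ and set $h(t):=\delta^2\bar g(t)$; on a sufficiently small subinterval $(0,T']$ one has $\Ric(h(t))\geq (2-\eta(t))/\delta^2\geq 2+\epsilon$ for a uniform $\epsilon>0$, while $\dist_{h(t)}=\delta\,\dist_{\bar g(t)}\to\delta\,\dist$ in GH as $t\to 0^+$.

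Next, using Hamilton's path-connectedness theorem \cite{Hamilton}, I would extend $\{h(t)\}_{t\in(0,T']}$ to a smooth one-parameter family $\{h(s)\}_{s\in[0,\infty)}$ of metrics on $S^3/\Gamma$ with $\Ric(h(s))\geq 2+\epsilon$ throughout, by prepending a smooth Cheeger--Gromov-continuous path from a reference metric $h_\star$ to $h(T')$. The metric $h_\star$ is chosen so that an explicit smooth initial cap is available at its cone tip -- for instance, as in Zhou's constructions \cite{Zhoucones} on the minimal resolution of $\mathbb{C}^2/\Gamma$, legitimate since every admissible $\Gamma$ is conjugate into $\mathrm{U}(2)$ -- after a harmless rescaling to preserve the Ricci slack. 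Up to reparametrization, $h(0)=h_\star$ and $(S^3/\Gamma,\dist_{h(s)})\to(S^3/\Gamma,\delta\,\dist)$ in GH as $s\to\infty$.

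Finally, on $(r_\star,\infty)\times(S^3/\Gamma)$ I would build the asymptotic region via the Colding--Naber warped-product ansatz \cite{ColdingNabercones,Zhoucones}
\begin{equation*}
g_{\mathrm{wp}}=dr^2+\phi(r)^2\,h(s(r)),
\end{equation*}
with $\phi(r)/r\to 1$ and $s(r)\to\infty$ slowly as $r\to\infty$. A direct computation decomposes $\Ric(g_{\mathrm{wp}})$ into the Ricci tensor of the frozen cone (nonnegative, since $\Ric(h(s))\geq 2$) plus error terms controlled by $|s'(r)|$, $|s''(r)|$, $|\phi'(r)-1|$, $|\phi''(r)|$, and the cross-sectional $C^2$-norms of $\partial_s h(s)$. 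The uniform slack $\epsilon$ absorbs these errors for a suitable choice of $s(r)$ and $\phi(r)$, so $\Ric(g_{\mathrm{wp}})\geq 0$; gluing $g_{\mathrm{wp}}$ to the initial cap along $\{r=r_\star\}$ then yields a smooth complete $(M^4,g)$ with $\Ric\geq 0$ and Euclidean volume growth whose blow-downs along any $r_k\to\infty$ have cross-section $(S^3/\Gamma,\delta\,\dist)$. The main obstacle is precisely this last Ricci computation: it demands uniform $C^2$ control of $\partial_s h(s)$ -- provided by smoothness of the Hamilton isotopy on the bounded piece and by Simon's interior estimates for the Ricci flow tail -- together with a careful synchronization between $s(r)$, $\phi(r)$, the slack $\epsilon$, and the $C^2$-matching needed to glue to the initial cap smoothly while preserving $\Ric\geq 0$.
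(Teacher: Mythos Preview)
Your proposal is correct and follows essentially the same route the paper indicates: the paper does not give a detailed proof but points to exactly the three ingredients you use---Simon's Ricci-flow smoothing \cite{Simon14} for passing from the GH limit to nearby smooth metrics, Hamilton's path-connectedness of $\{\Ric\ge 2\}$ on a fixed $3$-manifold, and the Colding--Naber warped-product construction as implemented in \cite[Section~4]{Zhoucones}---with the shrinking factor $\delta$ playing the role of producing the strict Ricci slack. Your write-up simply fleshes out the mechanics (the order in which the pieces are assembled, the $C^2$ control needed for the warped-product Ricci computation, and the choice of initial cap via Zhou's resolution) that the paper leaves implicit.
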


We can rephrase \autoref{prop:Riclim}, by saying that, up to possibly scaling the distances, it holds
\begin{equation}\label{eq:Riccross}
\overline{\left\{(N^3,g)\,:\, \Ric_g\ge 2\right\}}\subseteq \bigcup\mathcal{C}_{\infty}(M^4,g)\, .
\end{equation}
Above, the union ranges among all $(M^4,g)$ with $\Ric\ge 0$ and Euclidean volume growth and the closure at the left-hand side is taken with respect to the Gromov-Hausdorff topology. On the other hand, by \autoref{thm:mainIntro},
\begin{equation}\label{eq:crossRCD}
\bigcup\mathcal{C}_{\infty}(M^4,g)\subseteq \left\{(Z,\dist_Z)\, :\, (Z,\dist_Z,\mathcal{H}^3) \, \text{is an $\RCD(2,3)$ $3$-manifold}\right\}\, .
\end{equation}
In principle, both the inclusions in \eqref{eq:Riccross} and \eqref{eq:crossRCD} might be strict. We will come back to this point in Section \ref{sec:OpenQuest}. We also note that in general it is not clear whether the choice $\delta=1$ is allowed in \autoref{prop:Riclim}.

\subsection{Failure of topological regularity and uniqueness for $n>4$}
Both the topological regularity (i) and the topological uniqueness (iii) for sections of blow-downs might fail in dimensions $n>4$. 
\medskip

To illustrate the failure of topological regularity, we consider the Eguchi-Hanson metric $g_{\mathrm{EH}}$ on $T^*S^2$, i.e., the cotangent bundle of $S^2$. This is a Ricci flat metric with Euclidean volume growth, see \cite{EguchiHanson}. Its blow-down is $C(\mathbb{RP}^3)$, where $\mathbb{RP}^3$ is endowed with the round metric with constant curvature $1$. We let $g:=dr^2+g_{\mathrm{EH}}$ be the product metric on $\R\times T^*S^2$. It is straightforward to check that $g$ is Ricci flat and has Euclidean volume growth. The blow-down of $(\R\times T^*S^2,g)$ is (unique and) isometric to $\R\times C(\mathbb{RP}^3)=C(S^4/(\mathbb{Z}/2\mathbb{Z}))$. Here $\mathbb{Z}/2\mathbb{Z}$ acts as an involution of $S^4$ with two fixed points. The cross-section $S^4/(\mathbb{Z}/2\mathbb{Z})$ is an orbifold, but it is not a topological $4$-manifold. 
\medskip

The first counterexamples to the topological uniqueness of cross-section for $n>4$ were constructed by Colding and Naber in \cite[Theorem 1.3]{ColdingNabercones}. The starting point is the following:
\begin{lemma}{\cite[Section 4]{ColdingNabercones}}\label{lemma:instab4d}
There exists a smooth family $(g_t)_{t\in(0,1]}$ of smooth Riemannian metrics on $\mathbb{CP}^2\#\overline{\mathbb{CP}^2}$ such that 
\begin{itemize}
\item[i)] $\Ric_t\ge 3$ for each $t\in (0,1]$;
\item[ii)] $\mathrm{vol}_t(\mathbb{CP}^2\#\overline{\mathbb{CP}^2})>v>0$ for each $t\in (0,1]$;
\item[iii)] $(\mathbb{CP}^2\#\overline{\mathbb{CP}^2},\dist_{g_t})\xrightarrow{\mathrm{GH}}(S^4,\dist)$, as $t\to 0$.
\end{itemize}
\end{lemma}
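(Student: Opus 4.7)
The plan is to construct $g_t$ as a $U(2)$-invariant cohomogeneity-one metric, exploiting the fact that $\mathbb{CP}^2\#\overline{\mathbb{CP}^2}$ (the blow-up of $\mathbb{CP}^2$ at a point, i.e.\ the first Hirzebruch surface) admits a cohomogeneity-one $U(2)$-action whose orbit space is an interval, with principal orbit $SU(2)\simeq S^3$ and two singular orbits $SU(2)/U(1)\simeq S^2$ (the two exceptional $\mathbb{CP}^1$'s). Writing the manifold as the compactification of $(0,L)\times SU(2)$ in which the Hopf $S^1$ collapses at each endpoint, and letting $(\alpha,\beta,\gamma)$ be the standard left-invariant coframe dual to a basis $(\sigma_1,\sigma_2,\sigma_3)$ of left-invariant vector fields on $SU(2)$ with $\sigma_1$ the Hopf direction, I would consider metrics of the form
\begin{equation*}
g_t=d\theta^2+f_t(\theta)^2\alpha^2+h_t(\theta)^2(\beta^2+\gamma^2),\qquad \theta\in[0,L_t],
\end{equation*}
with boundary conditions $f_t(0)=f_t(L_t)=0$, $|f_t'(0)|=|f_t'(L_t)|=1$, $h_t(0),h_t(L_t)>0$, and $h_t'(0)=h_t'(L_t)=0$, which are exactly what is needed for the doubly warped metric to descend smoothly across the two singular orbits.

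Next I would set up the interpolation. The round $S^4$ of radius $1$ corresponds formally to the degenerate profile $L=\pi$, $f=h=\sin\theta$, which violates $h(0),h(\pi)>0$ (the transverse $S^2$ direction collapses at the poles). The idea is to keep $f_t(\theta)$ close to $\sin\theta$ (say, identically $\sin\theta$ with $L_t=\pi$, which already satisfies the required boundary conditions) and to thicken $\sin\theta$ to a profile $h_t$ with $h_t(0)=h_t(\pi)=t$, $h_t'(0)=h_t'(\pi)=0$, and $h_t(\theta)=\sin\theta$ outside transition intervals of length $O(t)$. As $t\to 0$ the areas $4\pi t^2$ of the two $\mathbb{CP}^1$'s shrink to zero, the metrics converge smoothly on compact subsets of the complement of the two singular orbits, and the transition intervals have $g_t$-diameter $O(t)$; combining this with a standard Gromov--Hausdorff estimate yields (iii). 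Property (ii) follows from
\begin{equation*}
\mathrm{vol}_t = c\int_0^\pi f_t(\theta)h_t(\theta)^2\,d\theta \longrightarrow c\int_0^\pi \sin^3\theta\,d\theta = \mathrm{vol}(S^4) > 0
\end{equation*}
by dominated convergence, for a universal constant $c$ fixed by the normalization of $(\alpha,\beta,\gamma)$.

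The hard part is establishing (i). A direct computation using the O'Neill formulas for the Riemannian submersion $S^3\to S^2$ associated with the Hopf fibration shows that $\mathrm{Ric}(g_t)$ is diagonal in the orthonormal frame $\{\partial_\theta,\,f_t^{-1}\sigma_1,\,h_t^{-1}\sigma_2,\,h_t^{-1}\sigma_3\}$, with diagonal entries given by rational expressions in $f_t,h_t$ and their first two derivatives, including a strictly positive O'Neill contribution of order $f_t^{2}/h_t^{4}$ in the last three entries. On the round profile $f=h=\sin\theta$ all four entries equal $3$, so it suffices to arrange that the smoothing does not push any entry below $3$ in the transition intervals. The subtle point I expect to be the main obstacle is that imposing $h_t'(0)=0$ together with $h_t(0)$ small forces $h_t$ to be convex on part of the transition, which makes $-h_t''/h_t$ negative there and would, if unchecked, push $\mathrm{Ric}_t(\partial_\theta,\partial_\theta)=-f_t''/f_t-2h_t''/h_t$ below $3$. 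The crux is therefore to jointly perturb $f_t$ and $h_t$ in the transition so that any loss in $-f_t''/f_t-2h_t''/h_t$ is compensated by a strict gain in the $f_t^{2}/h_t^{4}$ O'Neill term (via a controlled contraction of the Hopf circle), and to verify that this can be done uniformly in $t$. I would attempt this by an ODE comparison with the round sphere profile, treating the transition region as a boundary layer of width $O(t)$ and bootstrapping from explicit model profiles (e.g.\ $h_t(\theta)=\sqrt{t^2+\sin^2\theta}$ with $f_t$ correspondingly adjusted) until the four pointwise inequalities $\mathrm{Ric}_t\ge 3\,\mathrm{Id}$ hold throughout.
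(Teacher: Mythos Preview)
The paper does not give its own proof of this lemma; it is cited from \cite[Section 4]{ColdingNabercones}, with the remark that the construction ``shares some similarities with some earlier examples due to Perelman \cite{Perelmanbetti} and X.~Menguy \cite{Menguy inftop}.'' Your cohomogeneity-one doubly-warped ansatz on $(0,L)\times SU(2)$ is indeed the right framework, and matches the general shape of those constructions.

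However, your specific plan has a genuine gap. You aim for the limit to be the \emph{round} $S^4$, taking $f_t=\sin\theta$ and $h_t\to\sin\theta$. This cannot work: the paper explicitly observes, right after stating the lemma, that the limit distance $\dist$ on $S^4$ \emph{cannot} be induced by a smooth Riemannian metric, by Cheeger--Colding diffeomorphic stability (since $\mathbb{CP}^2\#\overline{\mathbb{CP}^2}\not\approx S^4$); rather, $\dist$ is smooth away from two singular points. So any family with $\Ric_t\ge 3$ converging to round $S^4$ would contradict stability, and your target is wrong from the outset.

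This is not a cosmetic issue; it is exactly what makes your step (i) unfixable as written. Forcing $h_t(0)=t$, $h_t'(0)=0$, and $h_t\to\sin\theta$ on a transition of width $O(t)$ gives $h_t''/h_t$ of order $t^{-2}$ there, so $\Ric_t(\partial_\theta,\partial_\theta)=-f_t''/f_t-2h_t''/h_t\to-\infty$. Your proposed remedy, compensating via the O'Neill term $f_t^2/h_t^4$, fails because that term simply does not appear in the $\partial_\theta$--$\partial_\theta$ component of Ricci; it only enters the fibre directions. No contraction of the Hopf circle rescues the radial direction. In the actual Colding--Naber/Perelman-type construction the warping profiles are chosen so that the limit retains two genuine conical singularities (the two $\mathbb{CP}^1$'s collapse but the cross-sectional $S^3$ does not become round at the poles), and it is precisely this that keeps $-f''/f-2h''/h$ uniformly bounded below. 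You should revisit \cite[Section 4]{ColdingNabercones} or \cite{Perelmanbetti} for the correct profiles.
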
 
The construction of the family in \autoref{lemma:instab4d} shares some similarities with some earlier examples due to Perelman \cite{Perelmanbetti} and X. Menguy \cite{Menguy inftop}.

In \autoref{lemma:instab4d} (iii), the distance $\dist$ in the limit cannot be induced by a smooth Riemannian metric by the diffeomorphic stability \cite[Theorem A.1.3]{CheegerColding97I}. However, the underlying topological space of the limit is a topological manifold, and the distance is induced by a smooth Riemannian metric away from two points. We note that $\mathbb{CP}^2\#\overline{\mathbb{CP}^2}$ can be viewed as the nontrivial $S^2$ bundle over $S^2$. Hence $\mathbb{CP}^2\#\overline{\mathbb{CP}^2}$ bounds a smooth $5$-manifold $\overline{M}^5$, namely, the nontrivial $\overline{D}^3$ bundle over $S^2$. \\
Starting from \autoref{lemma:instab4d},  Colding and Naber construct a smooth metric $g$ with $\Ric\ge 0$ and Euclidean volume growth on $M^5$, i.e., the nontrivial $D^3$ bundle over $S^2$, such that the family of cross-sections of blow-downs is 
\begin{equation}
\mathcal{C}_{\infty}(M^5,g)=\left\{(\mathbb{CP}^2\#\overline{\mathbb{CP}^2},\dist_{g_t})\, : \, t\in (0,1]\right\} \bigcup \left\{ (S^4,\dist)\right\}\, .
\end{equation}
In particular, there are two blow-downs with non-homeomorphic cross-sections. Recently, P. Reiser has extended Colding and Naber's construction in \cite{Reiser} and obtained a large family of interesting examples of cross-sections of blow-downs in dimensions $n >4$.

\section{Some previous developments}\label{sec:previous}

In this section, we briefly review some of the numerous previous developments related to \autoref{thm:mainIntro}. We do not claim any completeness, and the choice is only motivated by the author's taste. 

\subsection{Dimensions $n\le 3$}\label{sec:prelle3}

For a complete surface $(M^2,g)$ with $\Ric\ge 0$ (equivalently, with nonnegative Gaussian curvature) and quadratic volume growth, the blow-down is unique and isometric to $C(S^1_{\theta})$, where $S^1_{\theta}$ is a circle of length $0<\theta\le 2\pi$. The length of the circle is determined by the asymptotic volume ratio of $(M^2,g)$. More precisely, it holds
\begin{equation}
\theta=2\lim_{r\to \infty}\frac{\mathrm{vol}(B_r(p))}{r^2}\, .
\end{equation}
\medskip

For $n=3$, the asymptotic can be richer, although it is still quite rigid with respect to higher dimensions and completely understood. Note that blow-downs of smooth, complete $(M^3,g)$ with $\Ric\ge 0$ and Euclidean volume growth need not be unique; see, for instance, \cite[Theorem 1.1]{ColdingNabercones}. 

\begin{theorem}\label{thm:3d}
Let $(M^3,g)$ be smooth, complete, with $\Ric\ge 0$ and Euclidean volume growth. Then every blow-down of $(M^3,g)$ is a metric cone $C(Z)$ over a cross-section $(Z,\dist_Z)$ which is an Alexandrov space with curvature $\ge 1$ with $Z\approx S^2$. Conversely, for every Alexandrov space $(Z,\dist_Z)$ with curvature $\ge 1$ and $Z\approx S^2$ there exist $\delta\le 1$ and $(M^3,g)$ with $\Ric\ge 0$ and Euclidean volume growth with a blow-down isometric to $(Z,\delta\dist_Z)$. 
\end{theorem}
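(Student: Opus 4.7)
The plan is to handle the two directions separately, adapting the four-dimensional strategy outlined in the excerpt to the more rigid two-dimensional geometry of cross-sections. For the forward direction, I would first appeal to \cite[Theorem 7.6]{CheegerColding96} to reduce any blow-down of $(M^3,g)$ to a metric cone $C(Z)$ with $\diam(Z)\le\pi$. Exactly as in the discussion preceding \autoref{thm:mainIntro}, the volume convergence of \cite[Theorem 5.4]{CheegerColding97I}, the stability of the $\RCD(0,3)$ condition under pointed measured Gromov--Hausdorff convergence, and Ketterer's cone theorem combine to show that $(Z,\dist_Z,\mathcal{H}^2)$ is an $\RCD(1,2)$ space. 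The decisive input specific to dimension $2$ is the equivalence, due to Lytchak and Stadler, between two-dimensional $\RCD(K,2)$ spaces and Alexandrov spaces of curvature $\ge K$; this identifies $(Z,\dist_Z)$ with a closed two-dimensional Alexandrov space of curvature $\ge 1$.

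To pin down the topology, I would invoke the classification of closed two-dimensional Alexandrov spaces, which leaves only $S^2$ and $\mathbb{RP}^2$ as possible homeomorphism types. The second must be excluded because $C(\mathbb{RP}^2)\cong\R^3/\{\pm\Id\}$ has a non-manifold orbifold point at its apex, while any noncollapsed Gromov--Hausdorff limit of smooth $3$-manifolds with $\Ric\ge 0$ and Euclidean volume growth is a topological manifold away from the apex. A convenient shortcut is to appeal to the topological classification of complete smooth $3$-manifolds with $\Ric\ge 0$ and Euclidean volume growth, which forces $M^3$ itself to be diffeomorphic to $\R^3$; hence $\partial B_R(p)$ is topologically $S^2$ for every $R>0$, and a Gromov--Hausdorff approximation argument at infinity transports this information to $Z$.

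For the converse, the plan is to combine two-dimensional smoothing with a Colding--Naber type construction of warped-product cones. Given $(Z,\dist_Z)$ of curvature $\ge 1$ with $Z\approx S^2$, one produces smooth Riemannian metrics $h_i$ on $S^2$ with Gauss curvature $\ge 1-\epsilon_i$ and $(S^2,\dist_{h_i})\xrightarrow{\mathrm{GH}}(Z,\dist_Z)$, for instance via Ricci-flow smoothing \cite{Simon14} or by direct convex-surface smoothing available in dimension two. For each such $(S^2,h_i)$, I would realise the rescaled cone $C((S^2,\delta_i h_i))$ for some $\delta_i\le 1$ as a blow-down of a smooth $(M_i^3,g_i)$ with $\Ric_{g_i}\ge 0$ and Euclidean volume growth, by choosing $g_i$ as a smooth cap near the origin glued to the warped product $dr^2+f_i(r)^2 h_i$ on the exterior, with $f_i$ concave and asymptotically linear with slope $\delta_i$. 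The standard warped-product Ricci formulas show that $\Ric_{g_i}\ge 0$ follows from $\Ric_{h_i}\ge g_{h_i}$ together with the scalar inequality $(f_i f_i')'\le 1$, which is compatible with $f_i(r)/r\to \delta_i$ whenever $\delta_i\le 1$. A diagonal extraction in $i$, combined with Gromov precompactness and volume convergence, then produces a single $(M^3,g)$ whose blow-down is isometric to $C((Z,\delta\dist_Z))$ for some $\delta\le 1$.

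The main obstacle I anticipate is the topological identification $Z\approx S^2$: while the metric-measure-theoretic structure of $Z$ is accessible through the by-now standard machinery, excluding $\mathbb{RP}^2$ genuinely leaves the abstract $\RCD$ framework and relies either on three-dimensional Ricci-flow smoothing or on finer topological input specific to smooth $3$-manifolds with $\Ric\ge 0$ and Euclidean volume growth. The remaining ingredients --- Ketterer's cone reduction, the two-dimensional $\RCD$-versus-Alexandrov identification, and the Colding--Naber-style warped-product realization --- can be considered essentially routine at this point.
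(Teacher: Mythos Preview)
Your forward direction is largely aligned with the paper (Ketterer plus Lytchak--Stadler), but the topology step has gaps. First, you jump to ``closed two-dimensional Alexandrov spaces'' without explaining why $Z$ has empty boundary; the paper lists $\overline{D}^2$ among the a~priori possibilities and rules it out via the Cheeger--Colding (no-)boundary stability. Second, your exclusion of $\mathbb{RP}^2$ does not work as written: saying the limit is a topological manifold \emph{away from the apex} is vacuous, since $C(\mathbb{RP}^2)$ is also a manifold away from the apex; what you need is manifold regularity \emph{at} the apex, which is exactly the nontrivial input from Simon--Topping (the paper's route~(ii)). Your alternative via $M^3\approx\R^3$ and ``Gromov--Hausdorff approximation transports $\partial B_R(p)\approx S^2$ to $Z$'' is morally close to the paper's route~(iii), but Gromov--Hausdorff closeness alone does not transfer homeomorphism type; the paper makes this precise by producing \emph{Green's function} level sets that are genuinely homeomorphic to $Z$ (via the slicing/uniform-contractibility machinery of \cite{BruePigatiSemola}) and then observing that such a level set bounds a compact $3$-manifold, forcing even Euler characteristic.

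The converse has a genuine gap. Your plan builds smooth $(M_i^3,g_i)$ with blow-down $C((S^2,\delta_i h_i))$ and then invokes a ``diagonal extraction'' to produce a single smooth $(M^3,g)$ with blow-down $C((Z,\delta\dist_Z))$. A Gromov--Hausdorff limit of the $M_i$ is only an $\RCD(0,3)$ space, not a smooth manifold, so this step fails. The paper's route~(i) fixes this by building \emph{one} smooth manifold: use path-connectedness of the space of metrics with curvature $\ge c>0$ on $S^2$ (Weyl/uniformization) to thread the smooth approximants $h_i$ into a single smooth family $h(t)$, and then apply the Colding--Naber construction \cite{ColdingNabercones} to produce a single $(M^3,g)$ whose set of blow-down cross-sections $\mathcal{C}_\infty$ contains the whole family $\{h(t)\}$; since $\mathcal{C}_\infty$ is GH-compact, the limit $(Z,\delta\dist_Z)$ lies in it as well. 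The paper's route~(ii) is even more direct and avoids approximation entirely: by \cite{Lebedevaetal}, $C(Z)$ embeds isometrically as the boundary of a convex cone in $\R^4$, i.e.\ as the graph of a $1$-homogeneous convex function $f:\R^3\to\R$; mollifying $f$ yields a smooth complete metric on $\R^3$ with $\mathrm{Sect}\ge 0$ and blow-down exactly $C(Z)$.
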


That any cross-section $(Z,\dist_Z)$ of some blow-down of $(M^3,g)$ must be an Alexandrov space with curvature $\ge 1$ follows from the combination of two results. Ketterer's \cite[Theorem 1.1]{Ketterer15}, implies that $(Z,\dist_Z,\mathcal{H}^2)$ is an $\RCD(1,2)$ space. In dimension two, the smooth intuition suggests that Ricci curvature and sectional curvature should be the same. This is confirmed by \cite[Theorem 1.1]{LytchakStadler}, due to A. Lythchak and S. Stadler, showing that $(Z,\dist_Z)$ is a $2$-dimensional Alexandrov space with curvature $\ge 1$.\footnote{More precisely, here we are exploiting the implication from $\RCD$ to Alexandrov, which is only valid in dimension $2$. The converse implication is valid in any dimension, as suggested again by the smooth intuition and proved by A. Petrunin in \cite{PetruninLSV}.}

Note that any $2$-dimensional Alexandrov space is homeomorphic to a surface (possibly with boundary), see \cite[Corollary 10.10.3]{BuragoBuragoIvanov}. If the curvature is $\ge 1$, then relying on a generalized Bonnet-Myers theorem \cite[Theorem 10.4.1]{BuragoBuragoIvanov}, it is possible to check that $Z$ must be homeomorphic to either $\overline{D}^2$, $\mathbb{RP}^2$, or $S^2$.\\
To prove that $Z\approx S^2$, there are at least three morally independent strategies:
\begin{itemize}
\item[i)] We can rely on \cite[Corollar 3.1]{Zhu93}, due to S.-H. Zhu. Since any blow-down $C(Z)$ of $(M^3,g)$ is, in particular, a $3$-dimensional noncollapsed Ricci limit space, it is a homology manifold of dimension $3$. Hence, if $o$ denotes the vertex of $C(Z)$, we have $H_{*}(C(Z),C(Z)\setminus \{o\};\Z)=H_{*}(\R^3,\R^3\setminus\{0\};\Z)$, by definition of homology manifold. On the other hand, for any cone it holds $H_{*}(C(Z),C(Z)\setminus \{o\};\Z)=H_{*}(Z,\Z)$, up to shifting the indexes. In particular, $H_{*}(\R^3,\R^3\setminus\{0\};\Z)=H_{*}(S^2,\Z)$. Therefore, $Z$ must be a homology $2$-sphere. It follows that $Z\approx S^2$. We stress that \cite[Corollary 3.1]{Zhu93} is stated only for limits of sequences with uniformly bounded diameters. The statement should generalize to the noncompact setting without too much effort.\footnote{We also warn the reader that the proof of \cite[Corollary 3.1]{Zhu93} relies on \cite[Theorem, pg. 393]{Petersen90}, which is not correct as stated, see \cite{Moore} and \cite{Ferry94}. However, the statement of \cite[Corollary 3.1]{Zhu93} is correct, and its proof can be fixed by exploiting the fact that noncollapsed Ricci limits of dimension $n$ have Hausdorff dimension $n$.}
\item[ii)] We can rely on the work of Simon \cite{Simon14} and Simon and P. Topping \cite{SimonTopping22}, who used the Ricci flow to prove that any noncollapsed $3$-dimensional Ricci limit space is a topological $3$-manifold (without boundary). Note that a topological $3$-manifold is, in particular, a homology $3$-manifold. One can conclude by arguing as before.
\item[iii)] We can rely on the tools developed in \cite{BruePigatiSemola}. Note that $Z$ cannot have boundary, by Cheeger and Colding's (no) boundary stability \cite[Theorem 6.2]{CheegerColding97I}. Let $p\in M^3$ denote an arbitrary point and $G_p:M\setminus\{p\}\to (0,\infty)$ denote the unique positive Green's function of the Laplacian with pole at $p$ and vanishing at infinity. In \cite{BruePigatiSemola}, we prove that there exists a sequence $t_i\to 0$ such that $t_i$ is noncritical for $G_p$ and the level set $\{G_p=t_i\}$ is homeomorphic to $Z$. Since the surface $\{G_p=t_i\}$ bounds the compact $3$-manifold $\{G_p>t_i\}$, $Z$ cannot be homeomorphic to $\mathbb{RP}^2$. Indeed the boundary of any $3$-manifold has even Euler characteristics.
\end{itemize}
For the proof of the converse implication of \autoref{thm:3d} there are (at least) two different approaches:
\begin{itemize}
\item[i)] Argue as we did in Section \ref{sec:Realization4d} for the $4$-dimensional case. Namely, one can combine: (a) the methods in \cite{ColdingNabercones}; (b) the fact that any sphere $(S^2,\dist)$ with curvature $\ge 1$ in the Alexandrov sense is the Gromov-Hausdorff limit of a sequence of smooth Riemannian metrics on $S^2$ with curvature $\ge c>0$, see \cite[Chapter 7, Section 6]{Alexandrov}; (c) the path connectedness of the space of metrics with curvature $\ge c>0$ on $S^2$ with respect to the smooth topology, established by H. Weyl in 1916 using the uniformization theorem.   
\item[ii)] Start from the observation, due to N. Lebedeva, V Matveev, Petrunin, and V. Shevchishin in \cite[Corollary 3.1]{Lebedevaetal}, that if $(Z,\dist_Z)$ is an Alexandrov space with curvature $\ge 1$ with $Z\approx S^2$ then $C(Z)$ is isometric to the surface of a convex cone in $\R^4$. This relies on the classical embedding theorem of A. D. Alexandrov. Such surface can be written as the graph of a homogeneous convex function $f:\R^3\to \R$, up to rotating the coordinates. Regularizing $f$ by convolution, one obtains a smooth, complete metric $g$ on $\R^3$ (i.e., the induced metric on the graph) with nonnegative sectional curvature and blow-down $C(Z)$. This option has the clear advantage with respect to (i) of producing a metric with nonnegative sectional curvature.
\end{itemize}

\subsection{The Ricci-flat case}
In the Ricci-flat $4$-dimensional case, the large-scale structure is much more rigid. Many people contributed to the problem before \cite{CheegerNaber15}. M. Anderson in \cite[Theorem 3.5]{Anderson89}, and S. Bando, A. Kasue, and H. Nakajima in \cite[Theorem, pg. 314]{BandoKasueNakajima}, obtained the same conclusions as in \autoref{thm:ChNaALE} under the additional assumption that
\begin{equation}
\int_{M^4}|\mathrm{Riem}|^2\di\mathrm{vol}<\infty\, .
\end{equation}
G. Tian obtained similar conclusions in \cite{Tian90}, where the focus was on the Kähler-Einstein case.
\medskip

We review some of the key ideas of the proof of \autoref{thm:ChNaALE}, as they play an important role in the proof of \autoref{thm:mainIntro} as well.
As explained in the introduction of \cite{CheegerNaber15}, it has been understood since \cite{CheegerColding96} that the key step towards proving \autoref{thm:ChNaALE} would be establishing the following:

\begin{theorem}{\cite[Theorem 5.2]{CheegerNaber15}}\label{thm:codim3}
Let $(M^n_i,g_i,p_i)$ be smooth Riemannian manifolds with $|\Ric_i|\to 0$, such that
\begin{equation}
(M^n_i,\dist_{g_i},p_i)\xrightarrow{\mathrm{pGH}}\R^{n-2}\times C(S^1_{\theta})\,, \quad 0<\theta\le 2\pi\, .
\end{equation}
Then $\theta=2\pi$, i.e., $\R^{2}\times C(S^1_{\theta})=\R^n$.
\end{theorem}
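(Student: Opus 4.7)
The plan is to argue by contradiction: suppose $\theta<2\pi$. Then $Y:=\R^{n-2}\times C(S^1_\theta)$ carries a codimension-$2$ singular stratum $S:=\R^{n-2}\times\{o\}$, where $o$ denotes the cone tip. The strategy is to promote this limit singularity to a persistent conical singularity along (the preimage of) $S$ at every small scale in the approximating $M_i^n$, and then to contradict the Cheeger--Naber $L^2$ curvature estimate for almost-Einstein manifolds. As a first step, the Cheeger--Colding almost splitting theorem applied on $B_2(p_i)$ yields harmonic $\varepsilon_i$-splitting maps $\mathbf{b}^i=(b^i_1,\ldots,b^i_{n-2}):B_2(p_i)\to\R^{n-2}$ satisfying
\[
\dashint_{B_1(p_i)}\Bigl(\sum_j|\Hess b^i_j|^2+\sum_{j,k}\bigl|\langle\nabla b^i_j,\nabla b^i_k\rangle-\delta_{jk}\bigr|^2\Bigr)\,d\mathrm{vol}_{g_i}\le\varepsilon_i\to 0.
\]

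Second, the central ingredient is the Cheeger--Naber transformation theorem for almost-Einstein manifolds: an $\varepsilon$-splitting map at unit scale can be upgraded, after composition with an appropriate lower-triangular matrix $A_{q,r}\in\mathrm{GL}(n-2,\R)$, to a $\delta(\varepsilon)$-splitting on $B_r(q)$ for every $q$ in a definite-measure subset of $B_{1/2}(p_i)$ and every $r\in(0,1/2)$, with $\delta(\varepsilon)\to 0$ as $\varepsilon\to 0$. Applying this to our sequence supplies points $q_i\to q_\infty\in S$ at which the rescaled balls $(M_i,r^{-2}g_i,q_i)$ remain pGH close, at every scale $r$, to spaces of the form $\R^{n-2}\times X^r_{q_\infty}$. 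Since the overall pGH limit is $Y$ and $q_\infty$ lies on the singular stratum, each $X^r_{q_\infty}$ is itself a $2$-dimensional metric cone $C(S^1_{\theta_r})$ with $\theta_r\le\theta<2\pi$.

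To close the argument I would invoke the Cheeger--Naber $L^2$ curvature bound for noncollapsed almost-Einstein manifolds,
\[
\int_{B_1(p_i)}|\mathrm{Riem}|^2\,d\mathrm{vol}_{g_i}\le C(n).
\]
On each $C(S^1_{\theta'})$ with $\theta'<2\pi$, the curvature concentrates distributionally at the vertex with total mass proportional to $2\pi-\theta'>0$; combined with Step~2, this forces a uniformly positive amount of $L^2$-curvature in each dyadic annulus around every point of a large $(n-2)$-dimensional subset of $S\cap B_{1/2}(q_\infty)$ in the approximating $g_i$. Summing the contributions over dyadic scales and stratum points yields $\int_{B_1(p_i)}|\mathrm{Riem}|^2\to\infty$, contradicting the uniform bound; hence $\theta=2\pi$. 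The most delicate step is the transformation theorem in Step~2, which relies crucially on the almost-Einstein hypothesis $|\Ric|\to 0$ through elliptic estimates for harmonic approximations, and whose failure under the weaker assumption $\Ric\ge 0$ is precisely what forces the subtler arguments behind \autoref{thm:mainIntro}.
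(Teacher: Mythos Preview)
Your proposal has a genuine circularity problem. The uniform $L^2$ curvature bound
\[
\int_{B_1(p_i)}|\mathrm{Riem}|^2\,d\mathrm{vol}_{g_i}\le C(n)
\]
that you invoke in the final step is one of the main \emph{outputs} of the Cheeger--Naber theory: it is deduced from the codimension~$4$ structure theorem, which in turn rests on the very statement you are trying to prove (no codimension~$2$ singularities). So as written the argument assumes its own conclusion. Moreover, the heuristic that ``curvature concentrates distributionally at the vertex with mass $2\pi-\theta'$'' is a two-dimensional phenomenon; the paper explicitly warns that no such elementary curvature-counting argument is available for $n\ge 3$. Even granting your Step~2, it is not clear how to turn persistent GH-closeness to $\R^{n-2}\times C(S^1_{\theta_r})$ into a definite lower bound on $\int_{B_r\setminus B_{r/2}}|\mathrm{Riem}|^2$ for the approximating smooth metrics without already knowing some $\varepsilon$-regularity or curvature estimate.

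The paper's route is different and avoids both issues. Your Step~2 (the transformation/slicing mechanism producing points $x_i$ at which the $(n-2)$-splitting persists at all scales) is indeed the right starting point. But instead of counting curvature, one looks at the \emph{harmonic radius} $r_h$ along the good slice: if $\theta<2\pi$ then $r_i:=r_h(x_i)\to 0$ at the minimum point. Rescaling by $r_i^{-1}$ and using the scale-invariant almost-splitting, the blow-up limit is $\R^{n-2}\times\tilde\Sigma$, Ricci-flat with Euclidean volume growth, hence $\R^n$; Anderson's $\varepsilon$-regularity upgrades the convergence to $C^{1,\alpha}$. Continuity of the harmonic radius under $C^{1,\alpha}$ convergence then gives $r_h=1$ at the rescaled basepoint while $r_h\equiv+\infty$ on $\R^n$, a contradiction. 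The key input beyond slicing is Anderson's $\varepsilon$-regularity from \cite{Anderson90}, which is logically prior to (and independent of) the $L^2$ bound.
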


We can rephrase \autoref{thm:codim3} by saying that codimension $2$ singularities do not appear for noncollapsed limits with bounded Ricci curvature.
If \autoref{thm:codim3} holds, then one can rule out codimension $3$ singularities as well, exploiting Anderson's $\varepsilon$-regularity theorem from \cite{Anderson90}. Namely, one can prove:

\begin{theorem}{\cite[Theorem 5.12]{CheegerNaber15}}\label{thm:codim4}
Let $(M^n_i,g_i,p_i)$ be smooth Riemannian manifolds with $|\Ric_i|\to 0$, $\mathrm{vol}(B_1(p_i))>v>0$ and 
\begin{equation}\label{eq:pGH3sim}
(M^n_i,\dist_{g_i},p_i)\xrightarrow{\mathrm{pGH}}\R^{n-3}\times C(Y)\, ,\quad i\to \infty\, ,
\end{equation}
for some metric space $(Y,\dist_Y)$. Then $(Y,\dist_Y)$ is the unit $2$-sphere, hence $\R^{n-3}\times C(Y)=\R^n$.
\end{theorem}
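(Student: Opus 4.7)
The plan is to bootstrap from \autoref{thm:codim3} and Anderson's $\varepsilon$-regularity to rule out codimension-$3$ singularities. First I would promote the limit to be smooth off the singular stratum $\R^{n-3}\times\{o\}$. Fix $(r,y)\in C(Y)$ with $r>0$ and let $q=(z,(r,y))\in \R^{n-3}\times C(Y)$ for some $z\in \R^{n-3}$. Because the $\R^{n-3}$ factor splits off by construction and the radial direction of the cone splits off at $q$ via the Cheeger--Colding splitting theorem (since $r>0$), every tangent cone at $q$ has the form $\R^{n-2}\times C(\Sigma_y)$, with $\Sigma_y$ a $1$-dimensional noncollapsed cross-section, hence a circle $S^1_{\theta_y}$ with $\theta_y\in(0,2\pi]$. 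Realising such a tangent cone as a pGH limit of a suitable diagonal rescaling of the original sequence $(M^n_i,g_i,p_i)$ and applying \autoref{thm:codim3} forces $\theta_y=2\pi$. Thus every non-vertex point of $C(Y)$ has tangent cone equal to $\R^n$, and Anderson's $\varepsilon$-regularity \cite{Anderson90} upgrades this pointwise to $C^{1,\alpha}$ convergence on a neighborhood, so $Y$ is a smooth $2$-manifold and $C(Y)\setminus\{o\}$ is a smooth $3$-manifold.

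Second, I would identify $Y$ metrically. Smooth convergence on the regular set combined with $|\Ric_i|\to 0$ implies that $C(Y)\setminus \{o\}$ is Ricci flat. The standard warped-product identity for the cone $dr^2+r^2 g_Y$ then forces $\Ric_Y\equiv 1$; in dimension $2$ this is the same as the Gauss curvature of $Y$ being identically $1$. The classification of compact surfaces of constant positive curvature yields $Y\approx S^2$ or $Y\approx \mathbb{RP}^2$, both endowed with the round metric of curvature $1$.

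Finally, to rule out $Y=\mathbb{RP}^2$ I would argue by lifting to double covers. The cone $C(\mathbb{RP}^2)=\R^3/\{\pm \mathrm{Id}\}$ has orientable double cover $\R^3$, branched only at the origin, so the limit $\R^{n-3}\times C(\mathbb{RP}^2)$ admits a topological double cover over its regular part which pGH converges to $\R^{n-3}\times \R^3=\R^n$. For $q_i\in M^n_i$ tending to a point of the singular stratum, I would construct corresponding $2$-sheeted covers of small annular neighborhoods in $M^n_i$; these inherit uniform noncollapsing and $|\Ric|\to 0$, and pGH converge to a Euclidean annulus. Anderson's $\varepsilon$-regularity applied on the covers would then force each cover to be diffeomorphic to a Euclidean annulus, hence simply connected, contradicting the nontriviality of the deck action.

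The main obstacle is the third step: making the lifting argument precise along the sequence. One must ensure that the fundamental groups of suitable annular shells around $q_i$ stabilise (which requires some quantitative Reifenberg-type transverse regularity to the singular stratum), and that the resulting $2$-sheeted covers inherit both the uniform almost-Einstein bound and the noncollapsing required by Anderson's $\varepsilon$-regularity. The quantitative neck-decomposition machinery developed in \cite{CheegerNaber15} is designed precisely to handle this step, and it is the crux of the codim-$3$-to-codim-$4$ implication.
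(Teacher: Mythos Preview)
Your first two steps match the paper's argument: using \autoref{thm:codim3} to show all blow-ups of $Y$ are $\R^2$, then Anderson's $\varepsilon$-regularity to deduce $Y$ is a smooth surface with $\Ric_Y\equiv 1$, hence round $S^2$ or $\mathbb{RP}^2$.

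The third step, however, diverges sharply from the paper, and your version has a genuine gap. The paper does \emph{not} use a covering argument at all. Instead, it exploits the $C^{1,\alpha}$ convergence away from the singular stratum $\mathcal{S}=\R^{n-3}\times\{o\}$ to build smooth approximations $u_i:M_i\to\R^{n-3}$ (harmonic) and $h_i:M_i\to[0,\infty)$ (with $\Delta h_i=6$) of the projection $u_\infty$ and the squared distance $\dist_{\mathcal{S}}^2$. For large $i$ the slice $\{u_i=0\}\cap\{h_i=1\}$ is diffeomorphic to $Y$ and bounds the compact $3$-manifold $\{u_i=0\}\cap\{h_i\le 1\}$. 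Since the boundary of any compact $3$-manifold has even Euler characteristic and $\chi(\mathbb{RP}^2)=1$, the case $Y\approx\mathbb{RP}^2$ is excluded. This is the argument originating in \cite{CheegerColdingTian}.

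Your lifting approach runs into the very difficulty you flag: to build the $2$-sheeted covers along the sequence you need to know that $\pi_1$ of suitable annular regions in $M_i$ surjects onto $\mathbb{Z}/2$, which amounts to topological control \emph{across} the singular stratum---precisely what is not yet available at this stage. The neck-decomposition machinery in \cite{CheegerNaber15} is not designed for this; it is used for quantitative $L^2$ curvature estimates, not to stabilise $\pi_1$ of transverse shells. The slicing/Euler-characteristic route bypasses this entirely by working only on the regular part, where smooth convergence is already in hand.
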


\begin{proof}[Sketch of the proof]
Thanks to \autoref{thm:codim3}, all blow-ups of $(Y,\dist_Y)$ are isometric to $\R^2$. By Anderson's $\epsilon$-regularity theorem from \cite{Anderson90}, it follows that $(Y,\dist_Y)$ is a smooth surface endowed with a smooth metric $g$ such that $\Ric_g=g$. Indeed, a cone over a smooth section $(N^{n-1},g_N)$ is Ricci-flat away from the vertex if and only if the cross-section satisfies $\Ric_N=(n-2)g_N$. This means that $Y$ is isometric to either $S^2$ or $\mathbb{RP}^2$, endowed with the round metric with constant curvature $\equiv 1$. 

We need to rule out the second option. The origins of the argument below date back to Cheeger, Colding, and Tian's \cite{CheegerColdingTian}. 

Let $o\in C(Y)$ denote the vertex of the cone. Note that $\R^{n-3}\times C(Y)$ is a smooth Riemannian manifold away from $\mathcal{S}:=\R^{n-3}\times\{o\}$. By \cite{Anderson90} the $\mathrm{pGH}$ convergence in \eqref{eq:pGH3sim} can be upgraded to local $C^{1,\alpha}$ convergence away from $\mathcal{S}$. We consider the coordinate map onto the Euclidean factor $u_{\infty}:\R^{n-3}\times C(Y)\to \R^{n-3}$ and the squared distance from the singular set, $\dist^2_{\mathcal{S}}:\R^{n-3}\times C(Y)\to [0,\infty)$. Note that they satisfy $\Delta u_{\infty}=0$ and $\Delta \dist^2_{\mathcal{S}}=6$. We can approximate them locally uniformly and smoothly away from $\mathcal{S}$ with smooth functions $u_i:M_i\to \R^{n-3}$ and $h_i:M_i\to [0,\infty)$ such that $u_i(p_i)=0$, $\Delta u_i=0$ and $\Delta h_i=6$. By smooth convergence away from $\mathcal{S}$, the level set $\{u_i=0\}\cap \{h_i=1\}$ is diffeomorphic to $Y$ for $i$ large enough. Moreover, it bounds the compact $3$-manifold $\{u_i=0\}\cap \{h_i\le 1\}$. Since the boundary of every $3$-manifold has even Euler characteristic, $Y\approx S^2$. 
\end{proof}

\begin{proof}[Sketch of the proof of \autoref{thm:ChNaALE}]
We can exploit \autoref{thm:codim4} to show that the cross-section of each blow-down must be a smooth Riemannian $3$-manifold. Arguing as we did above, we deduce that any such cross-section is Einstein with Einstein constant $2$. Hence, it is a spherical space form $S^3/\Gamma$ endowed with the round metric with constant curvature $\equiv 1$. The uniqueness of the blow-down easily follows from the connectedness of the space of cross-sections. 
\end{proof}

While \autoref{thm:codim3} is intuitively clear for $n=2$, since the distributional curvature of $C(S^1_{\theta})$ is a Dirac delta at the vertex unless $\theta=2\pi$, no such elementary heuristics can be made to work for $n\ge 3$.
To prove \autoref{thm:codim3} for $n\ge 3$, Cheeger and Naber introduced a powerful \textbf{slicing} theorem, that we formulate below in a simplified way.

\begin{theorem}{\cite[Theorem 1.23]{CheegerNaber15}}\label{thm:slicing}
Let $(M^n_i,g_i,p_i)$ be smooth Riemannian manifolds with $\Ric_i\ge -\varepsilon_i\to 0$, such that
\begin{equation}\label{eq:convn-2}
(M^n_i,\dist_{g_i},p_i)\xrightarrow{\mathrm{pGH}}\R^{n-2}\times \Sigma\, ,
\end{equation}
for some metric space $(\Sigma,\dist_{\Sigma})$. Then there exist harmonic ``almost-splitting''\footnote{See \cite[Section 9]{CheegerFermi} for the relevant background.} maps $u_i:B_2(p_i)\to \R^{n-2}$ and points $q_i\in u_i(B_1(p_i))$ such that for every $x\in u_i^{-1}(q_i)$ and every $0<r<1$ it holds
\begin{equation}
\mathrm{d}_{\mathrm{GH}}\left(B_r(x),B_r(0^{n-2},s)\right)<\delta_ir\, ,
\end{equation}
where $B_r(0^{n-2},s)\subset \R^{n-2}\times \Sigma_{x,r}$, $(\Sigma_{x,r},\dist_{\Sigma_{x,r}})$ is a metric space, and $\delta_i\to 0$ as $i\to\infty$.
\end{theorem}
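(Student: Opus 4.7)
The plan is to construct harmonic almost-splitting maps via Cheeger--Colding and then select a good fiber by a coarea/pigeonhole argument driven by the quantitative stratification, with cone splitting used to propagate the splitting property along the fiber.

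First I would produce the maps. Since $\R^{n-2}\times\Sigma$ splits an $\R^{n-2}$ factor and $\Ric_i\ge -\varepsilon_i\to 0$, the Cheeger--Colding almost splitting theorem provides harmonic $u_i:B_2(p_i)\to \R^{n-2}$ converging to the coordinate projection onto the Euclidean factor, with the $L^2$ smallness
\[
\dashint_{B_1(p_i)}\Bigl(\bigl|\langle\nabla u_i^a,\nabla u_i^b\rangle-\delta^{ab}\bigr|^2+|\Hess u_i|^2\Bigr)\di\mathrm{vol}\longrightarrow 0.
\]
These $u_i$ supply the candidate $n-2$ splitting directions at every scale; the task becomes finding a level along which they genuinely detect the product structure at every point and every scale simultaneously.

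Next I would introduce the \emph{bad set} $\mathcal{B}_i\subset B_1(p_i)$ of points $x$ for which some scale $r\in(0,1)$ fails to present $r^{-1}B_r(x)$ as a $\delta_i$-Gromov--Hausdorff approximate product with an $\R^{n-2}$ factor. Since every tangent of $\R^{n-2}\times \Sigma$ splits $\R^{n-2}$, iterating the almost-volume-cone-implies-almost-metric-cone theorem and cone splitting bounds, for $x$ in a large subset, the number of bad scales uniformly, so $\mathrm{vol}(\mathcal{B}_i)\to 0$. The $L^2$ closeness of the Gram matrix of $\nabla u_i$ to the identity makes $u_i$ into an almost measure-preserving submersion, and the coarea formula yields
\[
\int_{\R^{n-2}}\haus^2\bigl(u_i^{-1}(q)\cap \mathcal{B}_i\bigr)\di q=(1+o(1))\,\mathrm{vol}(\mathcal{B}_i)\longrightarrow 0,
\]
so averaging over $\R^{n-2}$ selects $q_i\in u_i(B_1(p_i))$ whose fiber avoids a small enlargement of $\mathcal{B}_i$.

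Finally I would upgrade ``fiber disjoint from $\mathcal{B}_i$'' to the pointwise conclusion. Cone splitting implies that if a nearby $x'\in u_i^{-1}(q_i)\setminus\mathcal{B}_i$ has $r^{-1}B_r(x')$ close to $\R^{n-2}\times Y_{x',r}$ and $x\in u_i^{-1}(q_i)$ lies within $\delta_i r$ of $x'$, then $r^{-1}B_r(x)$ is $\delta_i^{1/2}$-close to $\R^{n-2}\times \Sigma_{x,r}$ after rebasing; the $L^2$-Hessian bound keeps the splitting directions produced by $u_i$ aligned along the fiber, so the symmetries at neighboring points and neighboring scales combine rather than merely coexist. \textbf{The main obstacle} is precisely this passage from the statistical statement (most points are good at most scales) to the pointwise ``every point, every scale'' conclusion on the chosen fiber: it forces every symmetry to be recorded by the \emph{same} global map $u_i$ so that cone splitting can be iterated coherently, and it is where essentially all the technical weight of Cheeger and Naber's argument lies.
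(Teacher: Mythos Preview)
The paper does not contain a proof of this statement; it is quoted from \cite{CheegerNaber15} and used as a black box. What follows compares your outline to the Cheeger--Naber argument itself.

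Your architecture is right in spirit, but the implementation has a real gap. You define $\mathcal{B}_i$ \emph{geometrically}---points where some ball fails to be GH-close to a product---and then apply coarea. Coarea yields fibers whose intersection with $\mathcal{B}_i$ has small $\haus^2$-measure, not fibers disjoint from $\mathcal{B}_i$; a subset of small area on a surface can be dense, so your cone-splitting upgrade from ``nearby good points'' to ``every point'' on the fiber has no traction. Nor can you thicken $\mathcal{B}_i$ to an open neighborhood and still control its volume, since badness can occur at arbitrarily small scales.

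The actual mechanism is different in a way that matters. The bad set is defined \emph{analytically}: $x$ is bad if the scale-invariant Hessian energy $r^{2-n}\int_{B_r(x)}|\Hess u_i|^2$ exceeds a threshold at some dyadic scale. Because this is the integral of a fixed nonnegative density, a covering/maximal-function argument bounds the measure of bad \emph{values} in $\R^{n-2}$ (not merely bad points in $M_i$), so one can choose $q_i$ whose fiber contains \emph{no} bad point. The passage from ``small Hessian energy at every scale along the fiber'' to ``GH-close to a product at every scale'' is then the \emph{transformation theorem}: for each such $x$ and each $r$ there is a lower-triangular matrix $A_{x,r}$ with $A_{x,r}\circ u_i$ a $\delta$-splitting map on $B_r(x)$, proved by induction on dyadic scales via a telescoping Hessian estimate. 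This step is not a consequence of cone splitting. Your closing sentence correctly identifies that the hard work is forcing the \emph{single} map $u_i$ to record the splitting at every scale; the tool that does this is the transformation theorem, not an iteration of cone splitting among neighboring fiber points.
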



The slicing \autoref{thm:slicing} provides a good slice, i.e., a level set, such that the almost product structure at the top scale coming from \eqref{eq:convn-2} is kept at all locations and scales on the slice. 


\begin{proof}[Sketch of the proof of \autoref{thm:codim3}]
The proof relies on a blow-up argument in the spirit of \cite{Anderson90}. If the cone angle $\theta$ is $<2\pi$, then the minimum of the harmonic radius $r_h$\footnote{See \cite[Chapter 10]{Petersenbook} for the relevant background.} on the slice $u^{-1}_i(q_i)$ is attained at some $x_i\in u_i^{-1}(q_i)$ and goes to $0$ as $i\to\infty$. We set $r_i:=r_h(x_i)$, rescale and obtain a sequence
\begin{equation}
(M^n_i,r_i^{-1}\dist_{g_i},x_i)\xrightarrow{\mathrm{pGH}}(\R^{n-2}\times \tilde{\Sigma},\dist,x_{\infty})\, ,
\end{equation}
where $\R^{n-2}\times \tilde{\Sigma}$ is a Ricci-flat manifold with Euclidean volume growth. It follows that $\R^{n-2}\times \tilde{\Sigma}=\R^n$ and the convergence is in $C^{1,\alpha}$, by \cite{Anderson90}. Note that the harmonic radius of $(M^n_i,r_i^{-1}\dist_{g_i},x_i)$ at $x_i$ equals $1$ by construction. The harmonic radius behaves continuously in the limit. This results into a contradiction, since $r_h\equiv+\infty$ on $\R^n$. 
\end{proof}

For the above argument, it was clearly crucial that the almost product structure persists at all locations and scales along the good slice.

\begin{remark}\label{rm:slicingfails}
The slicing \cite[Theorem 1.23]{CheegerNaber15}, stated here as \autoref{thm:slicing}, has no exact counterpart in higher codimensions, i.e., when we replace $\R^{n-2}$ at the left-hand side of \eqref{eq:convn-2} with $\R^{n-k}$ for $3\le k\le n-1$. This can be verified by considering the examples constructed by A. Kasue and T. Washio in \cite[pg. 913-914]{KasueWashio}. Indeed, they construct metrics $g$ on $\R^n$ for $n\ge 4$ of the form
\begin{equation}
g:=f^2(r)\di t^2+\di r^2+\eta^2(r)g_{S^{n-2}}\, ,
\end{equation}
with $\Ric\ge 0$, Euclidean volume growth, and such that the following hold: 
\begin{itemize}
\item[i)] the projection onto the $t$-variable is a harmonic function with linear growth; 
\item[ii)] translations in the $t$-direction form a $1$-parameter family of isometries;
\item[iii)] $(\R^4,g)$ does not split any line isometrically.
\end{itemize} 
By (i) and \cite{CheegerColdingMinicozzi}, the projection onto the $t$-variable converges to a splitting function when we blow-down $(\R^4,g)$, i.e., when we restrict it to balls $B_{R_i}(p)$ with $R_i\to\infty$ it induces $\varepsilon_i$-splittings with $\varepsilon_i\to 0$ as $i\to\infty$. However, it cannot have level sets $\{t=c_i\}$ such that $B_r(q)$ becomes closer and closer as $i\to\infty$ to a product at all points in $q\in \{t=c_i\}$ and at every scale $0<r<R_i$, otherwise $(\R^4,g)$ would split, by (ii).
\end{remark}

\begin{remark}
By \cite[Theorem 1.2]{ColdingNabercones}, the conclusion of \autoref{thm:slicing} cannot hold \emph{for every} level set in general for $n\ge 3$. However, it holds for a large measure set of level sets.

For $n\ge 4$, it is possible to produce examples meeting the assumptions of \autoref{thm:slicing} where the functions $u_i$ do have critical points in $B_1(p_i)$ for every $i\in\N$. A possible construction combines the techniques in \cite{ColdingNabercones} with the gluing construction developed by Menguy in \cite{Menguy inftop}, exploiting the earlier work of Perelman \cite{Perelmanbetti}. One should be able to produce such examples so that the balls $B_1(p_i)$ are not contractible inside the balls $B_2(p_i)$.
\end{remark}

\subsection{K\"ahler surfaces}

When $(M^4,g)$ is a K\"ahler surface, \autoref{thm:mainIntro} follows from the works of G. Liu and G. Székelyhidi \cite{LiuSzekeI,LiuSzekeII}, as detailed in the Appendix of \cite{Zhou23}. Many of the techniques employed in these works heavily exploit the K\"ahler structure and, as such, have no counterpart in the setting of \autoref{thm:mainIntro}. 

To give readers an insight as to how the K\"ahler assumption can restrict the geometry of noncollapsed Ricci limits, it is worth mentioning that splittings of Euclidean factors always come into pairs in this setting. This is due to Cheeger, Colding, and Tian in \cite{CheegerColdingTian}. In particular, \autoref{thm:noRP2BPS} below, which is a key step towards the proof of \autoref{thm:mainIntro}, is a straightforward corollary of \cite[Theorem 9.1]{CheegerColdingTian} in the K\"ahler case.

\subsection{Nonnegative sectional curvature}\label{sec:curvge0}
When we strengthen the assumption $\Ric\ge 0$ to $\mathrm{Sect}\ge 0$, much stronger results hold, independently of the dimension. 

The first point worth stressing is that any smooth, complete $(M^n,g)$ with $\mathrm{Sect}\ge 0$ and Euclidean volume growth is diffeomorphic to $\R^n$. This follows from the soul theorem of Cheeger and D. Gromoll, together with the subsequent refinement due to W.-A. Poor, and the fact that the soul of any such manifold is a point. Otherwise, the volume growth could not be Euclidean.
Moreover, the blow-down is always unique in this context, as pointed out by Gromov in \cite[pg. 58-59]{BallmannGromovSchroeder}. See also the work of Kasue \cite{Kasue88} for a detailed proof. The cross-section $(Z,\dist_Z)$ of the blow-down is an Alexandrov space with curvature $\ge 1$, thanks to the work of Y. Burago, Gromov, and Perelman \cite[Corollary 7.10]{BuragoGromovPerelman92}. 

There are two other important statements worth recording:
\begin{itemize}
\item[i)] $Z\approx S^{n-1}$;
\item[ii)] there is a sequence of smooth Riemannian metrics $g_i$ on $S^{n-1}$ with sectional curvature $\ge -1$ such that 
\begin{equation}
(S^{n-1},\dist_{g_i})\xrightarrow{\mathrm{GH}}(Z,\dist_Z)\, , \quad  \text{as $i\to\infty$}\, .
\end{equation}
\end{itemize}
Both i) and ii) follow from the (much more general) results obtained by V. Kapovitch in \cite{Kapovitchcross}. The proofs are based on a slicing method. The slicing uses convex functions rather than harmonic functions in this case. It is crucial that lower bounds on the sectional curvature are inherited by level sets of convex functions.

\subsection{$\rm{CAT}(0)$ four-manifolds}

Although the curvature is assumed to have the opposite sign, we find it worth concluding this overview with the following result due to Lytchak, K. Nagano, and Stadler in \cite{LytchakNaganoStadler}.

\begin{theorem}\label{thm:CAT0}
Let $(X,\dist_X)$ be a $\mathrm{CAT}(0)$ four-manifold, i.e., a $\mathrm{CAT}(0)$ metric space whose underlying topological space is a topological four-manifold. Then $X\approx \R^4$. Moreover, the ideal boundary $\partial_{\infty}X$ is homeomorphic to $S^3$ and the canonical compactification $\overline{X}:=X\cup\partial_{\infty}X$ is homeomorphic to the closed ball in $\R^4$.
\end{theorem}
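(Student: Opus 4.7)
The plan is to fix a basepoint $p \in X$ and exploit the $\mathrm{CAT}(0)$ geodesic structure to identify $X$ with the tangent cone $T_p X$. As a complete $\mathrm{CAT}(0)$ space, $X$ is uniquely geodesic and contractible via the geodesic contraction to $p$; because $X$ is a topological manifold without boundary, it is also geodesically complete, so geodesics from $p$ extend to rays indefinitely. The strategy is: first identify the tangent cone $T_p X$ topologically with $\R^4$ by analyzing the space of directions $\Sigma_p X$; next show that the exponential map $\exp_p : T_p X \to X$ is a homeomorphism; finally extend this identification to the canonical compactifications to recover the statements about $\partial_{\infty} X$ and $\overline{X}$.

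The heart of the argument, and what I expect to be the main obstacle, is the local statement that $\Sigma_p X \approx S^3$ for every $p \in X$. By the $\mathrm{CAT}(0)$ condition on $X$, the space $\Sigma_p X$ inherits a $\mathrm{CAT}(1)$ metric, and since $X$ has Hausdorff dimension $4$, $\Sigma_p X$ has dimension $3$. Moreover, the geodesic scaling provides a deformation retraction of $B_r(p) \setminus \{p\}$ onto $\Sigma_p X$, so the local Euclidean structure of $X$ near $p$ forces $\Sigma_p X$ to have the homotopy type of $S^3$. The crucial remaining step is to promote these constraints, via the structure theory of low-dimensional $\mathrm{CAT}(1)$ spaces, to the conclusion that $\Sigma_p X$ is actually a topological $3$-manifold; once that is established, $\Sigma_p X$ is a simply connected homotopy $3$-sphere and hence homeomorphic to $S^3$ by Perelman's resolution of the Poincar\'e conjecture. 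Consequently $T_p X = C(\Sigma_p X) \approx C(S^3) \approx \R^4$.

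Next I would pass from the tangent cone to the global statement. The logarithm map $\log_p : X \to T_p X$ assigns to each $q \in X$ the initial velocity of the unique geodesic from $p$ to $q$, rescaled by $\dist_X(p, q)$. Uniqueness of $\mathrm{CAT}(0)$ geodesics gives injectivity, geodesic completeness of $X$ yields surjectivity, and the convexity of the distance function in $\mathrm{CAT}(0)$ spaces makes both $\log_p$ and $\exp_p$ continuous. Hence $\log_p$ is a homeomorphism, and $X \approx T_p X \approx \R^4$.

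Finally, for the ideal boundary and compactification: since $X$ is a proper $\mathrm{CAT}(0)$ space, every asymptote class of rays has a unique representative emanating from $p$, so the map $v \mapsto [t \mapsto \exp_p(tv)]$ induces a natural bijection $\Sigma_p X \to \partial_{\infty} X$. The visual topology on $\partial_{\infty} X$ corresponds under this identification to the angular topology on $\Sigma_p X$, so the bijection is a homeomorphism and $\partial_{\infty} X \approx S^3$. The same map extends to a homeomorphism between $\overline{T_p X} = T_p X \cup \Sigma_p X$ and $\overline{X} = X \cup \partial_{\infty} X$; since the former is homeomorphic to the closed $4$-ball, so is the latter.
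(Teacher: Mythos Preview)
The paper does not itself prove this theorem; it is quoted from Lytchak--Nagano--Stadler \cite{LytchakNaganoStadler}, with only a brief description of the method: a slicing technique built on the Lytchak--Nagano structure theory for geodesically complete spaces with upper curvature bounds \cite{LytchakNagano19,LytchakNagano22}, P.~Thurston's theorem \cite{Thurston96} that a $\mathrm{CAT}(0)$ $4$-manifold possessing a single tame point is Euclidean, and substantial input from geometric topology.

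Your proposal has two genuine gaps. First, the step you yourself flag as ``crucial'' --- upgrading $\Sigma_pX$ from a $3$-dimensional $\mathrm{CAT}(1)$ homotopy $3$-sphere to a topological $3$-manifold --- is not available as an off-the-shelf piece of $\mathrm{CAT}(1)$ structure theory. Establishing it (at least at one point) is essentially the content of \cite{LytchakNaganoStadler}; invoking it as a black box is circular.

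Second, the passage from $T_pX\approx\R^4$ to $X\approx\R^4$ via the exponential map is not justified. Your claim that $\log_p$ is injective ``by uniqueness of $\mathrm{CAT}(0)$ geodesics'' conflates uniqueness of geodesic \emph{segments} with uniqueness of geodesic \emph{extensions}: $\mathrm{CAT}(0)$ spaces may branch. A simplicial tree is $\mathrm{CAT}(0)$ and geodesically complete, yet two distinct points $q_1,q_2$ equidistant from a vertex $p$, whose geodesics to $p$ share an initial edge, satisfy $\log_p(q_1)=\log_p(q_2)$; so $\log_p$ is far from injective and $\exp_p$ is not even single-valued. Excluding branching in a $\mathrm{CAT}(0)$ topological $4$-manifold again requires control of $\Sigma_qX$ at \emph{every} $q$, which loops back to the first gap. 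This is presumably why the actual argument proceeds via \cite{Thurston96} rather than through a global exponential map.
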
 

The proof of \autoref{thm:CAT0} is also based on a slicing method, partly building on the structure theory for (geodesically complete) $\rm{CAT}(0)$ spaces developed earlier by Lytchak and Nagano in \cite{LytchakNagano19,LytchakNagano22}, as well as on a previous contribution due to P. Thurston \cite{Thurston96}. A second similarity with the proof of \autoref{thm:mainIntro} is the use of a series of deep results in geometric topology. 

Statements analogous to \autoref{thm:CAT0} hold in any dimension less than four. On the other hand, they fail in higher dimensions, as originally shown by M.-W. Davis and T. Januszkiewicz in \cite{DavisJanuskiewicz}. 

\section{On the proof of \autoref{thm:mainIntro}}\label{sec:proof}

The proof of \autoref{thm:mainIntro} depends on three independent tools, all developed in \cite{BruePigatiSemola}. We introduce them below and explain how they lead to \autoref{thm:mainIntro}. In the forthcoming sections we discuss the ideas that enter into their proofs.
\medskip

 The first tool is a variant of \autoref{thm:codim4} tailored for lower Ricci bounds:
 
 \begin{theorem}{\cite[Theorem 1.6]{BruePigatiSemola}}\label{thm:noRP2BPS}
 Fix $n\ge 3$. Let $(M^n_i,g_i,p_i)$ be smooth Riemannian manifolds with $\Ric_i\ge -\delta_i$, $\delta_i\to 0$, $\mathrm{vol}(B_1(p_i))>v>0$ and 
\begin{equation}\label{eq:pGH3sim}
(M^n_i,\dist_{g_i},p_i)\xrightarrow{\mathrm{pGH}}\R^{n-3}\times C(Y)\, ,\quad i\to \infty\, ,
\end{equation}
for some metric space $(Y,\dist_Y)$. Then $Y\approx S^2$.
 \end{theorem}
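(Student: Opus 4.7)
The plan is to combine a metric identification of $Y$ coming from $\RCD$ theory with a topological argument carried out at the smooth level on the $M_i^n$'s, in the spirit of method (iii) in Section~\ref{sec:prelle3}.

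First, I would pin down the metric structure of $Y$. By volume convergence and the stability of the $\RCD(0,n)$ condition under pointed measured Gromov--Hausdorff convergence, $(\R^{n-3}\times C(Y),\dist,\mathcal{H}^n)$ is an $\RCD(0,n)$ space; hence $(C(Y),\dist_{C(Y)},\mathcal{H}^3)$ is $\RCD(0,3)$ and, by Ketterer's theorem, $(Y,\dist_Y,\mathcal{H}^2)$ is $\RCD(1,2)$. The Lytchak--Stadler result recalled in Section~\ref{sec:prelle3} then upgrades this to: $(Y,\dist_Y)$ is a two-dimensional Alexandrov space with curvature $\ge 1$. In particular $\diam(Y)\le\pi$ by the generalized Bonnet--Myers theorem, and the topological classification of two-dimensional Alexandrov spaces forces $Y\approx S^2$, $\mathbb{RP}^2$, or $\overline{D}^2$.

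The case $Y\approx\overline{D}^2$ is excluded by no-boundary stability: a boundary in $Y$ would produce boundary in the noncollapsed limit $\R^{n-3}\times C(Y)$, contradicting the fact that a noncollapsed pGH limit of closed smooth Riemannian manifolds with $\Ric_i\ge -\delta_i\to 0$ cannot have boundary (originally \cite[Theorem 6.2]{CheegerColding97I}, later extended to the $\RCD$ framework).

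The bulk of the work is to rule out $Y\approx\mathbb{RP}^2$. The obstruction I would exploit is the elementary fact that the boundary of every compact three-manifold has even Euler characteristic, while $\chi(\mathbb{RP}^2)=1$. The scheme parallels method (iii) of Section~\ref{sec:prelle3}, bridged to higher codimension via the slicing \autoref{thm:slicing}. Concretely: (a) apply \autoref{thm:slicing} to produce harmonic almost-splitting maps $u_i:B_2(p_i)\to\R^{n-3}$ and regular values $q_i\in u_i(B_1(p_i))$ such that the fibers $N_i:=u_i^{-1}(q_i)\cap B_1(p_i)$ are, at every $x\in N_i$ and every scale $0<r<1$, Gromov--Hausdorff close to a cone over a compact surface; (b) on $N_i$ run a Green's-function argument in the spirit of the three-dimensional discussion recalled in Section~\ref{sec:prelle3}: select a noncritical level $t_i\to 0$ for which the surface $\{G_{x_i}=t_i\}\cap N_i$ is homeomorphic to $Y$ for $i$ large and bounds a compact three-dimensional subregion of $N_i$; (c) deduce that $\chi(Y)$ is even, and hence that $Y\not\approx\mathbb{RP}^2$.

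The main obstacle is step (b). In Section~\ref{sec:prelle3} the Green's-function argument was performed on an intrinsic smooth three-manifold with $\Ric\ge 0$ and Euclidean volume growth, whereas here $N_i$ is an extrinsic slice of $M_i^n$ and carries only quantitative, rather than pointwise, Ricci information inherited through the slicing. One must show that $N_i$ is a topological three-manifold for $i$ large, and that the existence, noncriticality, and topological identification of the Green's-function level set pass to the slice. The persistence of the almost-product structure at all locations and scales provided by \autoref{thm:slicing} is what makes this plausible; together with integral Bochner estimates and a coarea-based selection of a noncritical level along the lines of \cite{BruePigatiSemola}, it should deliver the required topological conclusion. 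This passage from ambient $M_i^n$ to the slice is the technical heart of the argument.
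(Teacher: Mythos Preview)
Your overall strategy is right and matches the paper: identify $Y$ as an Alexandrov surface with curvature $\ge 1$, rule out $\overline{D}^2$ by no-boundary stability, and rule out $\mathbb{RP}^2$ by realizing $Y$ as the boundary of a compact $3$-manifold inside $M_i$. The issue is in how you implement the last step.

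Your step (a) invokes \autoref{thm:slicing} to produce almost-splitting maps $u_i:B_2(p_i)\to\R^{n-3}$ whose good fibers $N_i$ retain an almost-product structure at all points and scales. But \autoref{thm:slicing} is a statement about maps to $\R^{n-2}$ when the limit splits an $\R^{n-2}$ factor; here the limit only splits $\R^{n-3}$, and \autoref{rm:slicingfails} explains precisely that the slicing mechanism has no direct analogue for maps to $\R^{n-k}$ with $k\ge 3$. So you cannot, with the tools at hand, obtain a $3$-dimensional slice $N_i$ with the scale-invariant control you need, and step (b) --- running an intrinsic Green's-function argument on $N_i$ --- never gets off the ground. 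You flag the difficulty of passing Ricci information to the slice, but the deeper problem is earlier: the slice with the required multi-scale control does not come for free.

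The paper's fix is not to separate the two steps but to merge them. One builds the Green-type function on the ambient $M_i$ (approximating the distance to the cone axis $\R^{n-3}\times\{o\}$) and pairs it with the $\R^{n-3}$ almost-splitting to get a single map $(v_i,u_i):B_{10}(p_i)\to\R^{n-3}\times[0,\infty)\subset\R^{n-2}$; this is \autoref{thm:slicingBPS}. Now the codimension is $2$ again, the slicing machinery applies, and the good level sets are \emph{surfaces} $\{(v_i,u_i)=(x_i,y_i)\}$. A separate argument (\autoref{prop:unifcontrslices}, via \autoref{lemma:Sigmaplane} and Petersen's homotopy stability) shows these surfaces are locally uniformly contractible and hence homotopy equivalent to $Y$ --- this identification step is also missing from your sketch. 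Finally, choosing $x_i$ a regular value of $v_i$, the surface bounds the $3$-manifold $\{v_i=x_i\}\cap\{u_i\le y_i\}$, and one concludes. Note that this bounding $3$-manifold needs no geometric control whatsoever; it only has to exist, which is why the paper can avoid the intrinsic analysis on a $3$-dimensional slice that your approach would require.
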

\begin{remark}
It is worth pointing out that \autoref{thm:noRP2BPS} is original only for $n\ge 4$. For $n=3$, it can be obtained by arguing as in the proofs of \autoref{thm:3d} that we outlined above. However, both the approach based on \cite{Zhu93}, as well as the one based on Ricci flow from \cite{SimonTopping22a}, fail to extend to $n\ge 4$, as of today.\\
If the $M_i$'s are assumed to be all orientable and the distance on $Y$ is induced by a smooth Riemannian metric, then the statement follows also from S. Honda's \cite{Hondaor}, in any dimension. Thanks to the recent work of C. Brena, Bru\`e, and Pigati \cite{BrenaBruePigati}, the smoothness assumption on the distance on $Y$ can be dropped.
\end{remark}

The ``no $C(\mathbb{RP}^2)$'' \autoref{thm:noRP2BPS} prevents the appearance in $\mathcal{C}_{\infty}$ of $\RCD(2,3)$ spaces which are clearly not topological manifolds, such as the spherical suspension over a round $\mathbb{RP}^2$. 
It turns out that understanding blow-ups is sufficient to establish topological regularity due to the following \textbf{manifold recognition} theorem for $3$-dimensional $\RCD$ spaces:

\begin{theorem}{\cite[Theorem 1.8]{BruePigatiSemola}}\label{thm:manrecRCD}
Let $(X,\dist,\haus^3)$ be an $\RCD(-2,3)$ space. Then $X$ is a topological $3$-manifold if and only if the cross-section of every blow-up of $(X,\dist)$ at every point is homeomorphic to $S^2$.
\end{theorem}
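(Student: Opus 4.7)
\emph{Forward direction.} Suppose $X$ is a topological $3$-manifold. Fix $p \in X$ and a blow-up $(C(Z), o)$ of $X$ at $p$. By Ketterer's cone theorem \cite{Ketterer15}, $(Z, \dist_Z, \haus^2)$ is an $\RCD(1,2)$ space; by Lytchak--Stadler \cite{LytchakStadler} it is a two-dimensional Alexandrov space with curvature $\ge 1$, hence topologically one of $S^2$, $\mathbb{RP}^2$, or $\overline{D}^2$. I would rule out $\overline{D}^2$ using Cheeger--Colding boundary stability, since a manifold without boundary near $p$ cannot produce a cone whose vertex is a boundary point. I would rule out $\mathbb{RP}^2$ by a local homology argument: $X$ being a $3$-manifold at $p$ gives $H_3(X, X \setminus \{p\}; \Z) = \Z$, while $H_3(C(\mathbb{RP}^2), C(\mathbb{RP}^2) \setminus \{o\}; \Z) = H_2(\mathbb{RP}^2; \Z) = 0$. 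Continuity of local homology under noncollapsed pointed measured Gromov--Hausdorff convergence, applied through the rescalings defining the blow-up, forces the two to agree, and therefore $Z \approx S^2$.

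\emph{Backward direction.} The content is to show that each $p \in X$ has a neighborhood homeomorphic to $\R^3$. At regular points this is automatic from the $\RCD$ structure theory (the regular set is an open subset biLipschitz to $\R^3$), so the argument really concerns singular points. I would follow the Green's function strategy alluded to in Section \ref{sec:prelle3}(iii): construct the positive Green's function $G_p$ of the Laplacian with pole at $p$ on a relatively compact subdomain of $X$, as available in the $\RCD$ framework. The plan is then (a) to produce a sequence of ``noncritical'' values $t_i \to 0^+$ such that each rescaled level set $(\Sigma_{t_i}, r_i^{-1} \dist)$, with $r_i$ the natural scale attached to $t_i$, is pointed-Gromov--Hausdorff close to the cross-section of some blow-up of $X$ at $p$; (b) to upgrade this metric proximity to a genuine homeomorphism $\Sigma_{t_i} \approx S^2$, using the hypothesis on cross-sections; (c) to use the regular Lagrangian flow of $\nabla G_p / |\nabla G_p|^2$ to identify the bicollar $\{0 < G_p < \epsilon\}$ with $S^2 \times (0, \epsilon)$; and (d) to extend across $p$ to obtain the desired homeomorphism between the neighborhood of $p$ and the open cone $C(S^2) \approx \R^3$.

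\emph{Main obstacle.} The crux is step (b), which amounts to an $\varepsilon$-regularity statement for level sets of Green's functions in the $\RCD$ setting. It requires a Sard/coarea-type result providing enough noncritical values $t_i$; sharp convergence of the rescaled $G_p$'s to the canonical Green's function on the limiting tangent cone $C(Z)$; and a topological stability argument transferring the $S^2$ topology of the cross-section down to $\Sigma_{t_i}$. The last input is by far the most delicate, and is precisely the kind of nonsmooth $\varepsilon$-regularity that the \cite{BruePigatiSemola} framework is engineered to supply. Once step (b) is in place, steps (c) and (d) become comparatively routine, modulo a careful treatment of the regular Lagrangian flow in the nonsmooth $\RCD$ environment.
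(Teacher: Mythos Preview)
Your backward direction contains a genuine gap in steps (c)--(d), which you call ``comparatively routine'' but which are in fact the central obstacle. Even in the \emph{smooth} Riemannian setting it is not known whether the Green distance $b_p = G_p^{-1}$ induces a trivial fibration on a punctured neighbourhood of $p$; the paper explicitly flags this (see the remark following \autoref{prop:Greensphere}) and notes that the whole proof would vastly simplify if it were true. In the $\RCD$ setting the situation is worse: regular Lagrangian flows are defined only $\haus^3$-a.e.\ and do not produce homeomorphisms, and the vector field $\nabla G_p/|\nabla G_p|^2$ is not even well-defined where $|\nabla G_p|$ vanishes or is merely an $L^2$ object. So knowing that many level sets $\mathbb{S}_r(p)$ are topological $2$-spheres does \emph{not} let you glue the annuli between them into a product $S^2\times(0,\eps)$. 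The paper's route is far more indirect: it establishes (i) local uniform simple-connectedness of good Green-balls via a covering-space argument, (ii) local uniform contractibility via an inductive Mayer--Vietoris / Hurewicz scheme, (iii) the generalized $3$-manifold property via a deformation retraction of punctured Green-balls onto Green-spheres, and finally (iv) genuine manifold regularity by showing the non-manifold set has general-position dimension one and invoking Thickstun's resolution theorem together with the Daverman--Repov\v{s} recognition criterion. None of this is captured by a gradient-flow argument.

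Your forward direction also has a gap: the ``continuity of local homology under noncollapsed pGH convergence'' that you invoke is not a general fact---it requires local \emph{uniform} contractibility of the rescaled spaces $(X,r_i^{-1}\dist,p)$, which is precisely \autoref{thm:locunicontr} and is proved using the same Green-ball machinery as the backward direction. The paper instead argues directly: good Green-spheres $\mathbb{S}_r(p)$ are homeomorphic to the blow-up cross-section $Z$ (\autoref{prop:loccont3d}), are $1$-LCC and hence tamely embedded by Bing's theorem, and therefore bound a compact $3$-manifold $\overline{\mathbb{B}}_r(p)$ when $X$ is a manifold (\autoref{cor:manifoldwithboundary}); since $\mathbb{RP}^2$ does not bound, $Z\approx S^2$.
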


We can rephrase \autoref{thm:manrecRCD} by saying that for an $\RCD(-2,3)$ space $(X,\dist,\haus^3)$ the following are equivalent:
\begin{itemize}
\item[i)] every $x\in X$ has a neighbourhood homeomorphic to $\R^3$;
\item[ii)] all blow-ups of $(X,\dist)$ at all points are homeomorphic to $\R^3$.  
\end{itemize}
For every $x\in X$, all the blow-ups of $(X,\dist)$ at $x$ have cross-sections homeomorphic to each other, thanks to \cite{LytchakStadler}. In particular, they are all homeomorphic to each other. Hence, ii) is equivalent to asking that one blow-up of $(X,\dist)$ at each point is homeomorphic to $\R^3$. Moreover, it is clear from the proof that \autoref{thm:manrecRCD} can be localised to open sets.

\begin{remark}\label{rm:nomanrec4}
A characterisation of the local topology in terms of blow-ups is not possible for $\RCD(-(n-1),n)$ spaces $(X,\dist,\haus^n)$ when $n\ge 4$. Indeed, Menguy constructed in \cite[Theorem 0.6]{Menguy inftop} a noncollapsed $4$-dimensional Ricci limit space $(Z,\dist_Z)$ which is a smooth Riemannian manifold away from a single point $z\in Z$ and such that no neighbourhood of $z$ has finite topological type. All the blow-ups of $(Z,\dist_Z)$ at $z$ (and hence at every point in $Z$) are homeomorphic to $\R^4$. 

On the other hand, it seems conceivable that if $(X,\dist,\haus^4)$ is an $\RCD(-3,4)$ space and $X$ is a topological $4$-manifold then all the cross-sections of all blow-ups of $(X,\dist)$ should be homeomorphic to $S^3$.
\end{remark}


The last tool for the proof of \autoref{thm:mainIntro} is a topological stability theorem.

\begin{theorem}{\cite[Theorem 1.11]{BruePigatiSemola}}\label{thm:stability3d}
Let $(X_i,\dist_i,\haus^3)$ be $\RCD(-2,3)$ spaces such that $X_i$ is a topological $3$-manifold for every $i\in \N$. Assume that 
\begin{equation}\label{eq:limstab}
(X_i,\dist_i)\xrightarrow{\mathrm{GH}}(X,\dist)\, \quad \text{as $i\to\infty$}\, ,
\end{equation}
without collapse, for some compact $\RCD(-2,3)$ space $(X,\dist,\haus^3)$. Then there exists $i_0\in\N$ such that $X_i$ is homeomorphic to $X$ for every $i\ge i_0$.
\end{theorem}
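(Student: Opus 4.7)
The plan is to split the argument into two logically distinct tasks: first, show that the limit space $X$ is itself a topological $3$-manifold; and then upgrade the noncollapsed Gromov-Hausdorff convergence $X_i\to X$ to a genuine homeomorphism for $i$ large.

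For the first task, I would apply the manifold recognition theorem, \autoref{thm:manrecRCD}, to the $\RCD(-2,3)$ space $X$. It suffices to show that at every $x\in X$ the cross-section $(Z,\dist_Z)$ of every metric tangent cone $C(Z)$ at $x$ is homeomorphic to $S^2$. By Ketterer's theorem $Z$ is an $\RCD(1,2)$ space, hence a $2$-dimensional Alexandrov space by Lytchak-Stadler; Cheeger-Colding boundary stability rules out $\overline{D}^2$, so $Z$ is homeomorphic to either $S^2$ or $\mathbb{RP}^2$. To exclude $Z\approx \mathbb{RP}^2$, I would realise $C(Z)$ via a diagonal argument as a pointed GH limit of rescalings $(X_i,r_i^{-1}\dist_i,x_i)$ with $r_i\to 0$. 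Since each $X_i$ is a topological $3$-manifold, sufficiently small metric balls about $x_i$ sit inside charts homeomorphic to $\R^3$, so their concentric topological annuli are simply connected; this is incompatible with the $\pi_1=\Z/2$ signature of annular regions in $C(\mathbb{RP}^2)$. Passing the incompatibility to the GH limit, combined with Simon-Topping Ricci-flow smoothing (which allows us to also view each $X_i$ as a noncollapsed Ricci limit of smooth $3$-manifolds with Ricci bounded below, to which \autoref{thm:noRP2BPS} applies through a second diagonal), forces $Z\approx S^2$.

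Once $X$ is known to be a topological $3$-manifold, the second task is to produce a genuine homeomorphism $X_i\to X$ for $i$ large. The strategy I would adopt is analytic-then-topological: build a fine GH-approximation $f_i:X_i\to X$ whose topological fibres are well-controlled (ideally cell-like), and then invoke a controlled approximation theorem from geometric topology. In dimension three, a cell-like surjection between closed topological $3$-manifolds is a near-homeomorphism (Armentrout), so it will suffice to ensure cell-likeness of the fibres. To construct $f_i$, I would cover $X$ by a finite collection of small metric balls which, by the first task and local manifold recognition, are homeomorphic to Euclidean balls; on each piece I would use harmonic $\delta$-splitting maps or regular level sets of Green's functions on $X_i$---tools available from \cite{BruePigatiSemola}---as topological coordinates compatible with the GH convergence, and patch the local pieces via a partition of unity.

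The main obstacle is the second task. The first step largely leverages machinery that \cite{BruePigatiSemola} already provides (\autoref{thm:manrecRCD} and the reasoning behind \autoref{thm:noRP2BPS}). By contrast, producing a \emph{global} homeomorphism requires controlling the topology of the fibres of the approximation maps at all scales simultaneously: GH-closeness is only a metric statement and does not, a priori, guarantee cell-like fibres. Ensuring cell-likeness demands careful use of analytic inputs (harmonic splitting, Reifenberg parametrisations) and exploits the full strength of the $3$-manifold assumption on the $X_i$, paralleling the role that deep geometric-topology results play in the proof of \autoref{thm:CAT0}.
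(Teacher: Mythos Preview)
Your first task---showing that the limit $X$ is a topological $3$-manifold---is essentially what the paper does, though your route is slightly roundabout. The paper invokes a ``mild generalisation'' of \autoref{thm:noRP2BPS} directly to the sequence $(X_i,\dist_i)$; you instead pass through Simon--Topping to replace each $X_i$ by smooth approximants. That works, but is an unnecessary detour, and your side remark about annular regions being simply connected is heuristic at best (it is not clear how to pass that $\pi_1$ information through GH convergence without already having the slicing and contractibility machinery).

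The genuine gap is in your second task. You propose to construct GH-approximations $f_i:X_i\to X$ with cell-like fibres and invoke Armentrout. As you yourself flag, GH-closeness gives no fibre control, and patching harmonic $\delta$-splitting maps or Reifenberg charts across a partition of unity does not obviously yield a continuous map with cell-like point-preimages; the metric Reifenberg theorem only applies on $\mathcal{R}_\eps$, and the effective singular set may be $1$-dimensional. The paper bypasses this entirely. The key missing ingredient is \emph{local uniform contractibility} (\autoref{thm:locunicontr}): there exist $C,\rho>0$ depending only on the noncollapsing constant such that every $B_r(p)$ with $r\le\rho$ is contractible in $B_{Cr}(p)$, uniformly across all the $X_i$ and $X$. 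Once you have this, Petersen's theorem \cite{Petersen90} produces $\epsilon_i$-homotopy equivalences $f_i:X_i\to X$ with $\epsilon_i\to 0$ (\autoref{prop:epsequiv}); and since both $X_i$ and $X$ are closed $3$-manifolds, Jakobsche's $\alpha$-approximation theorem \cite{Jakobsche} (the $3$-dimensional analogue of Chapman--Ferry, using the Poincar\'e conjecture) perturbs these $\epsilon_i$-equivalences to homeomorphisms. So the decisive step you are missing is not a direct construction of cell-like maps, but rather the uniform contractibility estimate that feeds into the Petersen--Jakobsche machine.
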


\begin{remark}
When the $(X_i,\dist_i)$ are all smooth Riemannian $3$-manifolds \autoref{thm:stability3d} was obtained in \cite{Simon14}, based on Ricci flow. 

\end{remark}

We can view \autoref{thm:stability3d} as a counterpart of Perelman's stability theorem, see \cite{Perelman99} and the exposition in \cite{KapovitchPer}, in the context of synthetic lower Ricci curvature bounds. There are two key differences with the case of Alexandrov spaces with curvature bounded from below:
\begin{itemize}
\item[i)] the restriction on the dimension being (less or equal than) $3$; 
\item[ii)] the restriction on the spaces being topological manifolds.
\end{itemize} 
The restriction on the dimension is unavoidable in this context. Indeed, Anderson constructed in \cite{Andersonlarge} a sequence of $(M^4_i,g_i)$ with $\Ric_i\ge 3$, $\mathrm{vol}(M_i)\ge v>0$, and $\diam(M_i)\le D<\infty$ for each $i\in\N$ and such that 
\begin{equation}
(M^4_i,g_i)\xrightarrow{\mathrm{GH}}(X,\dist)\, \quad \text{as $i\to \infty$}\, ,
\end{equation}
but for no $i\in \N$ the manifold $M^4_i$ is homeomorphic to $X$. See also the work of Y. Otsu \cite{Otsu} for some similar constructions and note that the failure of this topological stability is at the heart of \autoref{lemma:instab4d} as well. 

\begin{remark}
The topological stability does not hold even for noncollapsing sequences of Einstein manifolds when $n\ge 4$. For instance, C. LeBrun and M. Singer constructed in \cite{LeBrunSinger} sequences of Ricci flat metrics $g_i$ on a K3 surface Gromov-Hausdorff converging without collapse to the flat orbifold $T^4/\iota$, where $\iota$ is an involution of $T^4$ acting by reflection on each side. The existence of such sequences had been suggested earlier by G.-W. Gibbons and C.-N. Pope, and by D.-N. Page, in the late Seventies. 
\end{remark}
Concerning ii), it seems conceivable that \autoref{thm:stability3d} generalises to any noncollapsing sequence of $\RCD(-2,3)$ spaces $(X_i,\dist_i,\haus^3)$, although this generalisation requires some new ideas; see Section \ref{sec:OpenQuest} for a thorough discussion.

\begin{proof}[Sketch of the proof of \autoref{thm:mainIntro}]
We follow the proof of \cite[Theorem 12.14]{BruePigatiSemola}. Let $(Z,\dist_Z)\in \mathcal{C}_{\infty}$ be the cross-section of a blow-down of $(M^4,g)$. We noted in Section \ref{sec:Intro} that $(Z,\dist_Z,\haus^3)$ is an $\RCD(2,3)$ space. 
\medskip

\textbf{Claim 1}: $Z$ is a topological $3$-manifold.\\
Thanks to \autoref{thm:manrecRCD} it suffices to prove that if $z\in Z$ and $C(W)$ is a blow-up of $(Z,\dist_Z)$ at $z$, then $W\approx S^2$.
Note that, if $C(W)$ is a blow-up of $(Z,\dist_Z)$ at $z$, then $\mathbb{R}\times C(W)$ is a blow-up of $C(Z)$ at $(1,z)$. Since $C(Z)$ is a noncollapsed Ricci limit space (it is a blow-down of $(M^4,g)$), $\mathbb{R}\times C(W)$ is a noncollapsed Ricci limit space as well by a standard diagonal argument. By \autoref{thm:noRP2BPS} $W\approx S^2$. 
\medskip

\textbf{Claim 2}: $Z$ is homeomorphic to a spherical space form.\\
It suffices to show that $Z$ is compact with finite fundamental group. The resolution of the elliptization conjecture due to Perelman then yields the homeomorphism with a spherical space form.
The compactness of $(Z,\dist_Z)$ follows from the generalized Bonnet-Myers theorem for $\CD(n-1,n)$ spaces, due to K.-T. Sturm \cite{Sturm07I} and independently J. Lott and C. Villani \cite{LottVillani}. The finiteness of the fundamental group follows from the generalized Bonnet-Myers as well. Indeed, A. Mondino and G. Wei proved in \cite[Theorem 1.1]{MondinoWei} that the universal cover of an $\RCD(n-1,n)$ space is an $\RCD(n-1,n)$ space.
\medskip

\textbf{Claim 3:} The homeomorphism type is fixed on $\mathcal{C}_{\infty}$.  \\
The claim follows from the connectedness of $\mathcal{C}_{\infty}$ with respect to the Gromov-Hausdorff topology, \textbf{Claim 1}, and the stability \autoref{thm:stability3d}. 
\end{proof}

\subsection{The ``no $C(\mathbb{RP}^2)$'' \autoref{thm:noRP2BPS}}
We can view \autoref{thm:noRP2BPS} as a counterpart for \autoref{thm:codim4}, with the characterization of the section $(Y,\dist_Y)$ up to isometry being replaced by a characterization up to homeomorphism. Again, \autoref{thm:3d} shows that \autoref{thm:noRP2BPS} is optimal in this setting. 

\medskip

The proof starts from the same slicing idea of the proof of \autoref{thm:codim4}, and it hinges on a variant of \autoref{thm:slicing}. The key difference with the bounded Ricci case dealt with in \cite{CheegerNaber15} is the need to rule out topological codimension $3$ singularities without being able to rule out metric codimension $2$ ones. Indeed, \autoref{thm:codim3} fails when we drop the upper Ricci curvature bound from the assumptions. 
\medskip

To begin, it is worth introducing a variant of \autoref{thm:slicing}, which is better suited for our purposes.

\begin{theorem}{\cite[Theorem 5.2]{BruePigatiSemola}}\label{thm:slicingBPS}
 Fix $n\ge 3$. Let $(M^n_i,g_i,p_i)$ be smooth Riemannian manifolds with $\Ric_i\ge -\delta_i$, $\delta_i\to 0$, $\mathrm{vol}(B_1(p_i))>v>0$ and 
\begin{equation}\label{eq:pGH3simb}
(M^n_i,\dist_{g_i},p_i)\xrightarrow{\mathrm{pGH}}\R^{n-3}\times C(Y)\, ,\quad i\to \infty\, ,
\end{equation}
for some metric space $(Y,\dist_Y)$. Then, we can approximate the function 
\begin{equation}\label{eq:slicinglimit}
(\pi_{\R^{n-3}},\dist_{\R^{n-3}\times \{o\}}(\cdot)): \R^{n-3}\times C(Y)\to \R^{n-3}\times[0,\infty)
\end{equation}
in the uniform and $W^{1,2}$ sense with a sequence of (smooth) functions
\begin{equation}\label{eq:slicingfunctions}
(v_i,u_i): B_{10}(p_i)\to \R^{n-3}\times[0,\infty)\, ,
\end{equation}
such that the following holds. There exist $\epsilon_i>0$ with $\epsilon_i\to 0$ as $i\to\infty$ and Borel sets $\mathcal{B}_i\, \subset B_1(0^{n-3})\times [0,10]$
	such that
	\begin{itemize}
		\item[(i)] 	$\mathcal{L}^{n-2}(\mathcal{B}_i)\le\epsilon_i $;

		\item[(ii)] for every $(x,y) \in  B_1(0^{n-3}) \times [0,10]$ with
		\begin{equation}
			8 \le \sqrt{|x|^2 + y^2}\le 9 \, , \quad
			(x,y)\notin \mathcal{B}_i \, ,
		\end{equation}
		the corresponding level set $\{(v_i,u_i)=(x,y)\}$ is not empty;

		\item[(iii)] for every $s\in(0,c(\epsilon_i,n,v))$ and every $q\in (B_{9}(p_i)\setminus \overline{B}_{8}(p_i))\cap\{|v_i|< 1\}$ with $(v_i,u_i)(q)\notin\mathcal{B}_i$,
		there exists an $(n-2)\times(n-2)$ matrix $L_{q,s}$ such that
		\begin{equation}
			L_{q,s}\circ (v_i,u_i):B_s(q)\to\R^{n-2} 
		\end{equation}
		is an $\epsilon_i$-splitting map.\footnote{Note that we drop the harmonicity from the usual conditions in the definition of $\epsilon$-splitting maps, see \cite[Definition 1.20]{CheegerNaber15}).}
		\end{itemize}
\end{theorem}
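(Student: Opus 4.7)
The plan is to follow the Cheeger-Naber slicing scheme \cite{CheegerNaber15}, adapted to a merely lower Ricci bound and to the fact that the limit only splits the $\R^{n-3}$ factor, while the cone over $Y$ contributes an additional ``radial'' direction that cannot be expected to split. Both target coordinates admit natural manifold approximations: for the Euclidean factor, a harmonic almost-splitting map of Cheeger-Colding type; for the radial distance from the singular line, an approximation built from a ``cone potential'' solving a Poisson equation.

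First I would build $v_i:B_{10}(p_i)\to \R^{n-3}$ as a harmonic $\eps_i$-splitting map approximating $\pi_{\R^{n-3}}$ uniformly and in $W^{1,2}$; this is standard since $\R^{n-3}$ embeds isometrically into the limit. Next, noting that the limit function $r:=\dist_{\R^{n-3}\times\{o\}}$ satisfies $|\nabla r|=1$ and $\Delta(r^2/2)=\dim C(Y)=3$ away from the singular line, I would let $h_i$ solve $\Delta h_i=3$ on $B_{10}(p_i)$ with boundary values approximating $r^2/2$, and set $u_i:=\sqrt{2\max\{h_i,0\}}$ (mollified near $h_i=0$). Combining the Bochner identity with $\Ric_i\ge -\delta_i$ and cutoff integration by parts à la Cheeger-Colding yields $h_i\to r^2/2$ and $u_i\to r$ uniformly and in $W^{1,2}$, together with an integrated Hessian estimate
\[
\dashint_{B_9(p_i)} \bigl|\Hess(h_i) - (g_i - \di u_i\otimes \di u_i)\bigr|^2 \le \eps_i\, .
\]
At the top scale this, together with the convergences, makes $(v_i,u_i)$ an $\eps_i$-splitting: the almost-orthogonality $\nabla v_i\cdot \nabla u_i\approx 0$ comes from the limit splitting, while $|\nabla u_i|\approx 1$ comes from the Hessian estimate and $|\nabla h_i|\approx r u_i$.

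The heart of the argument is propagating this top-scale almost-splitting to every slice and every scale, absorbing scale-dependent distortions into a linear transformation $L_{q,s}$. The transformation theorem for $\eps$-splittings controls the failure of $L_{q,s}\circ(v_i,u_i)$ to be an $\eps$-splitting on $B_s(q)$ by a scale-$s$ Hessian energy; applying the coarea formula along the fibres of $(v_i,u_i)$ to the integrated estimates above then produces an exceptional Borel set $\mathcal{B}_i\subset B_1(0^{n-3})\times[0,10]$ of measure $O(\eps_i)$ outside of which every point on the corresponding fibre has good behaviour at every scale $s\in(0,c(\eps_i,n,v))$. Non-emptiness of the good level sets in the annulus $\{8\le\sqrt{|x|^2+y^2}\le 9\}$ in (ii) follows from the uniform closeness of $(v_i,u_i)$ to the limit function, which surjects onto that annular region.

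The principal obstacle is this slicing estimate: unlike the Cheeger-Naber setting where one works with harmonic almost-splitting maps enjoying the sharp Bochner identity $\Delta|\nabla u|^2/2=|\Hess u|^2+\Ric(\nabla u,\nabla u)$, here $u_i$ is not harmonic, so the transformation and propagation across scales must be extracted from the weaker integrated Hessian bound on the cone potential $h_i$. The matrices $L_{q,s}$ must simultaneously rescale $u_i$ and correct the orthogonality with $v_i$ that degrades as one passes to smaller scales near the singular line $\R^{n-3}\times\{o\}$, which requires careful bookkeeping of the mean values of $\nabla u_i$ and $\nabla v_i$ along the fibres and a nontrivial iteration of the top-scale estimates down the dyadic scales.
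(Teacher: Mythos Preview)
Your strategy matches the paper's: adapt the Cheeger--Naber slicing scheme with $v_i$ a harmonic almost-splitting map and $u_i$ a PDE-based radial approximant, then propagate across scales via a transformation theorem and coarea; the paper confirms the proof follows \cite[Theorem 1.23]{CheegerNaber15} ``although there are some additional error terms that are more delicate to control,'' and you correctly locate these in the non-harmonicity of $u_i$. The one substantive difference is the construction of $u_i$: you solve $\Delta h_i=3$ and set $u_i=\sqrt{2h_i}$, whereas the paper (explicitly for $n=3$, and in the spirit of the later Green-distance machinery of \autoref{prop:Greensphere}) builds $u_i$ from the local Green's function $G_{p_i}$ with pole at $p_i$ and constant Dirichlet data on $B_{10}(p_i)$, exploiting that on $C(Y)$ the Green's function at the tip is a power of the distance. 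The advantage of the Green's function is that it is genuinely harmonic away from the pole, so Bochner and the scale-transformation estimates apply to it directly before one transfers control to the derived Green-distance $b_{p_i}$. Your Poisson route should also work, since $\Delta h_i=\mathrm{const}$ still yields $\Delta|\nabla h_i|^2 = 2|\Hess h_i|^2 + 2\Ric(\nabla h_i,\nabla h_i)$, and the paper itself remarks that ``there is some freedom with the choice of the approximating functions $u_i$ and $v_i$.'' One minor correction to your displayed estimate: on $\R^{n-3}\times C(Y)$ one has $\Hess(r^2/2)=g-\sum_j \di v^j\otimes \di v^j$ (the projection onto the cone factor), not $g-\di u\otimes \di u$.
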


The proof of \autoref{thm:slicingBPS} follows the same strategy as the proof of \cite[Theorem 1.23]{CheegerNaber15}, stated here as \autoref{thm:slicing}, although there are some additional error terms that are more delicate to control.

For $n=3$, the functions $u_i$ in \eqref{eq:slicingfunctions} are obtained starting from Green's functions of the Laplacian with constant Dirichlet boundary conditions on $B_{10}(p_i)$ and poles at $p_i$. Note that on the limit cone $C(Y)$ the Green's function of the Laplacian with pole at the vertex $o$ is a power of the distance function $\dist_o(\cdot)$. See \cite[Section 4]{BruePigatiSemola} for the details. \\
The Green's function of the Laplacian has played an important role in several previous works on manifolds and spaces with Ricci bounded from below; see, for instance, \cite{ColdingGreen,JiangNaber}.

\begin{remark}
There is some freedom with the choice of the approximating functions $u_i$ and $v_i$ in \autoref{thm:slicingBPS}. This is important for the applications. 
\end{remark}

\begin{remark}\label{rm:noncrit}
Item (iii) implies that every ``good slice'' contains only noncritical points, see \cite[Section 7.5]{CheegerJiangNaber} for the key idea of the proof. In particular, any such slice is an embedded surface inside the ambient $M^n_i$. Of course, the existence of many noncritical level sets could be achieved with a much more standard argument based on Sard's lemma. The crucial aspect of \autoref{thm:slicingBPS} is that a suitable nondegeneracy of the maps $(v_i,u_i)$ can be obtained at all scales along the good slices.
\end{remark}

The level sets of $(\pi_{\R^{n-3}},\dist_{\R^{n-3}\times \{o\}}(\cdot)): \R^{n-3}\times C(Y)\to \R^{n-3}\times[0,\infty)$, endowed with the induced metric, are (almost) all isometric to $(Y,\dist_Y)$, up to scaling. Hence, the level sets of the approximating functions in \eqref{eq:slicingfunctions} should approximate well $(Y,\dist_Y)$. A potential issue related to the very nature of Gromov-Hausdorff convergence is that these approximations might be very good at scales comparable to $1$ without having any control when we zoom up at much smaller scales. 
The slicing \autoref{thm:slicingBPS} fixes this issue. Namely, we can find many values (cf. with (i) above) such that the corresponding level sets of the functions $(v_i,u_i)$ converge to the section $(Y,\dist_Y)$ in the Gromov-Hausdorff sense, and the ambient manifolds have $(n-2)$ almost splitting directions at all locations and scales on the level (cf. with (iii) above).
\medskip

If $n=3$, then \autoref{thm:slicingBPS} holds also in the non-smooth case where the manifolds $(M^3,g_i)$ in \eqref{eq:pGH3sim}
 are replaced by $\RCD(-\delta_i,3)$ spaces $(X_i,\dist_i,\haus^3)$. This is key to the proof of the manifold recognition \autoref{thm:manrecRCD}. A precise and more effective statement follows.

\begin{proposition}{\cite[Proposition 9.4]{BruePigatiSemola}}
\label{prop:Greensphere}		
        For every $\eta>0$, if $\eps \le \eps_0(\eta,v)$ the following holds.
        Let $(X,\dist,\haus^3)$ be an $\RCD(-\eps,3)$ space with empty boundary (see \cite{DePhilippisGigli2,KapovitchMondino,BrueNaberSemolabdry} for some background about the notion of boundary in this setting) and $p\in X$ be such that $\haus^3(B_1(p))\ge v$.
        Assume that
		\begin{equation}
		\dist_{\rm{GH}}\left(B_{20}(p),B_{20}(o)\right)\le \eps  \, ,
        \quad o \in C(Z)\, ,
		\end{equation}
		where $(Z,\dist_Z)$ is a $2$-dimensional Alexandrov surface with curvature $\ge 1$. Then there exist a good ``Green distance'' $b_p: B_2(p)\to \R$ and
		a Borel set of radii $r\in(1/4,1)$ of measure at least $(1-\eta)\frac34$ such that
		the topological boundary $\mathbb{S}_r(p)$ of the sub-level set $B_{(1-\eta)r}(p)\subset \mathbb{B}_r(p):=\{b_p<r\} \subset B_{(1+\eta)r}(p)$ satisfies:
		\begin{itemize}
			\item[(i)] $\mathbb{S}_r(p) = \{b_p=r\}$;
			\item[(ii)] up to rescaling by $r$, $\mathbb{S}_r(p)$ (endowed with the restriction of $\dist$) is $\eta$-GH-close to $(Z,\tilde{\dist}_Z)$, where $\tilde{\dist}_Z:=2\sin(\dist_Z/2)$;
			\item[(iii)] $\mathbb{S}_r(p)$ is a closed topological surface;
\item[(iv)] for every $q\in \mathbb{S}_r(p)$ and every $0<s<c(\eta,\nu)$,
\begin{equation}
\frac{b_p-r}{\fint_{B_s(q)}|\nabla b_p|}:B_s(q)\to \R\, 
\end{equation}
is an $\eta$-splitting function.
		\end{itemize}
\end{proposition}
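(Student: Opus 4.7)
The plan is to define $b_p$ as a renormalized inverse power of a local Green's function whose level sets model distance spheres on the comparison cone $C(Z)$. Explicitly, I would solve the Dirichlet problem $\Delta G_p=-\delta_p$ on $B_{20}(p)$ with vanishing boundary data and set $b_p := c\, G_p^{-1}$, with a dimensional constant $c$ chosen so that the same recipe carried out on $C(Z)$ returns the distance $\dist_o$ from the vertex. The identity $\Delta_{C(Z)}\dist_o^{-1}\equiv 0$ on the punctured cone is precisely what makes this the right ``Green distance,'' and it is consistent with the choices made in \cite[Section 4]{BruePigatiSemola} for the slicing theorem.

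The first block of estimates quantifies closeness to the model. Stability of Green's functions under noncollapsed $\RCD$-convergence yields that $b_p$ is $\Psi(\eps\mid v)$-close to $\dist_o$ in $C^0\cap W^{1,2}$ on the annulus $B_5(p)\setminus B_{1/8}(p)$, with $\Psi\to 0$ as $\eps\to 0$. Coupled with integral Hessian estimates along the lines of Cheeger--Colding and their $\RCD$ counterparts, this gives
\begin{equation*}
\dashint_{B_2(p)}|\Hess b_p|^2\di\haus^3\le \Psi(\eps\mid v),
\end{equation*}
which is the main analytic input coming from the near-cone structure. From here the slicing strategy of \autoref{thm:slicingBPS}, adapted to the $\RCD$ setting as in \cite[Section 5]{BruePigatiSemola}, extracts a set of good radii $\mathcal{G}\subset(1/4,1)$ with $|\mathcal{G}|\ge (1-\eta)\tfrac34$ along which, at every $q\in\{b_p=r\}$ and every $s<c(\eta,v)$, the renormalization $(b_p-r)/\dashint_{B_s(q)}|\nabla b_p|$ is an $\eta$-splitting on $B_s(q)$; the extraction is a coarea-plus-Chebyshev argument removing radii with anomalously large integrated Hessian oscillation along the slice. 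This yields (iv), while the uniform $C^0$-closeness of $b_p$ to $\dist_o$ gives (i), and the observation that the intrinsic distance induced on $\{\dist_o=r\}\subset C(Z)$ is the chord distance $r\tilde\dist_Z=2r\sin(\dist_Z/2)$ gives (ii) via the $W^{1,2}$-control and (iv).

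The main obstacle will be (iii), the topological surface conclusion. The plan is to leverage (iv) pointwise: the almost-splitting of $b_p$ at every location and scale on $\{b_p=r\}$ ensures that every blow-up of the ambient space at a slice point splits off one Euclidean factor transverse to the slice, and the orthogonal factor is a $2$-dimensional noncollapsed $\RCD$-limit, hence a $2$-dimensional Alexandrov space of curvature $\ge 0$ by \cite{LytchakStadler}, in particular a topological surface. Crucially, no such blow-up can have the form $\R\times C(\mathbb{RP}^2)$, by the $3$-dimensional base case of \autoref{thm:3d} applied to the ambient $\RCD(-\eps,3)$ structure. A transversal application of the manifold recognition \autoref{thm:manrecRCD}, localised to the slice, then promotes this pointwise Euclidean structure to a genuine $2$-manifold atlas on $\mathbb{S}_r(p)$; combined with topological stability for Alexandrov surfaces of curvature $\ge 1$ (Perelman's stability in dimension $2$) and the GH-closeness from (ii), this identifies $\mathbb{S}_r(p)$ with $Z$ up to homeomorphism. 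The identity $\mathbb{S}_r(p)=\{b_p=r\}$ of item (i) then follows from (iv) together with \autoref{rm:noncrit}, which ensures noncriticality at every slice point.
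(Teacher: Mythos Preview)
Your construction of $b_p$ from a local Dirichlet Green's function and the use of the slicing machinery to obtain (iv), together with the derivation of (i) and (ii) from $C^0/W^{1,2}$-closeness to the model cone, are in line with the paper's approach. One quantitative slip: the displayed Hessian bound $\fint_{B_2(p)}|\Hess b_p|^2\le \Psi(\eps\mid v)$ is not correct as stated, since on the model cone $b_p=\dist_o$ has $\Hess\dist_o=\dist_o^{-1}(g-\di r^2)$, which is bounded but certainly not small on the annulus. The small quantity is the \emph{deviation} from this model (equivalently, the Hessian of $b_p^2$ minus $2g$), and it is this deviation that feeds into the coarea/Chebyshev selection of good radii.

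The real gap is in your argument for (iii). You invoke ``a transversal application of the manifold recognition \autoref{thm:manrecRCD}, localised to the slice'' to conclude that $\mathbb{S}_r(p)$ is a topological surface. This is circular: in the architecture of \cite{BruePigatiSemola} and of this survey, \autoref{prop:Greensphere} is one of the basic inputs for the proof of \autoref{thm:manrecRCD} (see the roadmap in Section~\ref{sec:proof} and the role of good Green-balls and Green-spheres in Sections~\ref{subsec:loc1c}--\ref{sec:genman}). You cannot assume the $3$-dimensional recognition theorem here. Moreover, even setting circularity aside, \autoref{thm:manrecRCD} applies to $\RCD(-2,3)$ spaces, whereas the slice $\mathbb{S}_r(p)$ endowed with the \emph{restriction} of the ambient distance is not known to carry any $\RCD$ structure, so the theorem is not even applicable to it. (The aside about ruling out $\R\times C(\mathbb{RP}^2)$ is also dimensionally off: the orthogonal factor at a slice point is $2$-dimensional, and $C(\mathbb{RP}^2)$ is $3$-dimensional.)

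The paper's route to (iii) avoids all of this. One first runs the blow-up argument you describe to land in the rigid situation of \autoref{lemma:Sigmaplane}: at every $q\in\mathbb{S}_r(p)$ and every small scale, the ambient ball is $\eta$-close to $\R\times\Sigma$ with $\Sigma$ a $2$-dimensional Alexandrov surface homeomorphic to $\R^2$. From this one deduces \emph{local uniform contractibility of the slice} (cf.\ \autoref{prop:unifcontrslices} and \autoref{prop:loccont3d}(i)), and then the surface structure and the identification $\mathbb{S}_r(p)\approx Z$ are obtained by combining this uniform contractibility with Petersen's homotopy stability \cite{Petersen90} and elementary $2$-dimensional topology, not by appealing to the $3$-dimensional recognition theorem.
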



\begin{remark}
Even in the case where $(X,\dist)$ is smooth Riemannian, it is not clear whether the Green distance $b_p$ induces a (trivial) fibration on an annulus. This is a major difference with respect to the context of lower sectional curvature bounds. The proof of \autoref{thm:manrecRCD} would vastly simplify in the presence of such a fibration result. It would be interesting to see whether any counterexample exists.  
\end{remark}

The next step is to turn the metric control along the good slices coming from \autoref{thm:slicingBPS} (iii) (or \autoref{prop:Greensphere} (iv)) into topological information.
The starting point is to deal with the rigid case, which arises through a blow-up argument in the spirit of the proof of \autoref{thm:codim4}. 

\begin{lemma}\label{lemma:Sigmaplane}
Fix $n\ge 2$. Let $(M^n_i,g_i,p_i)$ be smooth Riemannian manifolds with $\Ric_i\ge -\delta_i\to 0$, such that
\begin{equation}\label{eq:convn-2EVG}
(M^n_i,\dist_{g_i},p_i)\xrightarrow{\mathrm{pGH}}\R^{n-2}\times \Sigma\, ,
\end{equation}
for some metric space $(\Sigma,\dist_{\Sigma})$. Assume that $\Sigma$ has quadratic volume growth when endowed with the measure $\mathcal{H}^2$. Then $\Sigma$ is an Alexandrov space with $\mathrm{Sect}\ge 0$ homeomorphic to $\R^2$.
\end{lemma}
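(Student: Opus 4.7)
The strategy is to first identify $(\Sigma,\dist_\Sigma,\mathcal{H}^2)$ as an $\RCD(0,2)$ space, then promote this to an Alexandrov curvature bound via Lytchak--Stadler, and finally classify the topology using the quadratic volume growth together with absence of boundary inherited from the smooth approximating sequence.

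I would start by observing that the convergence in \eqref{eq:convn-2EVG} is \emph{noncollapsing}. Fix $q\in\Sigma$ witnessing quadratic growth $\mathcal{H}^2(B_r^\Sigma(q))\ge c r^2$ for every $r>0$; since on the product $Y:=\R^{n-2}\times\Sigma$ the Hausdorff measure factorises as $\mathcal{H}^n=\mathcal{L}^{n-2}\otimes\mathcal{H}^2$, the point $(0,q)\in Y$ satisfies $\mathcal{H}^n(B_r^Y(0,q))\gtrsim r^n$. Picking basepoints $p_i'\in M^n_i$ with $p_i'\to(0,q)$ and invoking Colding's volume continuity \cite[Theorem 5.4]{CheegerColding97I}, we obtain $\mathrm{vol}(B_1(p_i'))\ge v>0$ eventually. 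Stability of the $\RCD(K,N)$ condition under pmGH convergence \cite{GigliMondinoSavare}, applied to $K_i=-\delta_i\to 0$ and $N=n$, then shows that $(Y,\dist_Y,\mathcal{H}^n)$ is an $\RCD(0,n)$ space. Since $Y$ isometrically splits off the Euclidean factor $\R^{n-2}$ and the reference measure factorises accordingly, an iterated use of the converse direction of the $\RCD$ tensorisation/splitting in the noncollapsed setting yields that $(\Sigma,\dist_\Sigma,\mathcal{H}^2)$ is an $\RCD(0,2)$ space.

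Next, by the Lytchak--Stadler result already quoted after \autoref{thm:3d}, in dimension two the $\RCD(0,2)$ condition coincides with Alexandrov curvature $\ge 0$, so $(\Sigma,\dist_\Sigma)$ is a $2$-dimensional Alexandrov space of nonnegative curvature. It remains to pin down the topology. First, $\Sigma$ has no Alexandrov boundary: the ambient $Y$ is a noncollapsed Ricci limit of manifolds without boundary, hence it has no synthetic boundary (\cite{DePhilippisGigli2,KapovitchMondino,BrueNaberSemolabdry}), and a product $\R^{n-2}\times\Sigma$ has synthetic boundary precisely when $\Sigma$ does. Second, $\Sigma$ is noncompact, as a compact space cannot exhibit quadratic volume growth. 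I would then apply the soul theorem for nonnegatively curved Alexandrov spaces: $\Sigma$ deformation retracts onto a compact totally convex soul $S$. Dimension $2$ for $S$ forces $\Sigma$ to be compact, which is excluded; dimension $1$ forces $S$ to be a closed geodesic (a circle, as $\Sigma$ has no boundary) and $\Sigma$ is then homeomorphic to a cylinder $S^1\times\R$ or a M\"obius strip, both of which have only \emph{linear} volume growth, contradicting the hypothesis. Hence $S$ is a single point; $\Sigma$ is contractible, and a contractible $2$-dimensional topological manifold without boundary is homeomorphic to $\R^2$.

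The main obstacle I anticipate is the step in which one passes from $\R^{n-2}\times\Sigma$ being $\RCD(0,n)$ to $\Sigma$ itself being $\RCD(0,2)$: while tensorisation goes in the opposite direction rather cleanly, this converse splitting-type statement, keeping track of the precise dimensional parameter and of the Hausdorff-measure factorisation, is the one genuinely synthetic ingredient of the argument. Once it is in place, the remainder is a mechanical assembly of Lytchak--Stadler with the Alexandrov soul theorem and classical surface topology.
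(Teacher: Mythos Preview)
Your proposal is correct and follows essentially the same route as the paper's sketch: stability gives $\RCD(0,n)$ on the product, a splitting argument drops to $\RCD(0,2)$ on $\Sigma$, Lytchak--Stadler upgrades to Alexandrov $\ge 0$, absence of boundary is inherited from the smooth approximants, and quadratic volume growth forces $\Sigma\approx\R^2$. Two minor remarks: the step you flag as the main obstacle is dispatched in the paper by a direct citation to Gigli's splitting theorem \cite{Gigli13}, so no new work is needed there; and the paper's final topological step is stated tersely as ``quadratic volume growth then yields the homeomorphism with $\R^2$'', whereas your soul-theorem argument spells out one clean way to make that implication precise.
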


\begin{proof}[Sketch of the proof]
By the stability of synthetic Ricci curvature lower bounds, $\R^{n-2}\times \Sigma$ is an $\RCD(0,n)$ space when endowed with the Hausdorff measure $\haus^{n}$. By the stabilty of the $\RCD$ condition under splittings, see \cite{Gigli13}, $(\Sigma,\dist_{\Sigma},\haus^{2})$ is an $\RCD(0,2)$ space. By \cite[Theorem 1.1]{LytchakStadler}, $(\Sigma,\dist_{\Sigma})$ is an Alexandrov space with $\mathrm{Sect}\ge 0$. Moreover, $(\Sigma,\dist_{\Sigma})$ has empty boundary by \cite[Theorem 6.2]{CheegerColding97I}; hence it is a topological surface. The quadratic volume growth then yields the homeomorphism with $\R^{2}$. 
\end{proof}

Our goal is to combine \autoref{thm:slicingBPS} (and \autoref{prop:Greensphere}) with \autoref{lemma:Sigmaplane} to show that when we zoom up enough on each good slice, we see no topology. Moreover, there should be a quantitative estimate of what ``enough'' means.
It is key for this aim that the homeomorphism between $\Sigma$ and $\R^2$ arising from \autoref{lemma:Sigmaplane} is quantitatively well-behaved. To clarify what this means, we recall:

 \begin{definition}[Local uniform contractibility] 
 Let $\mathcal{F}$ be a family of metric spaces. Let $C_0,\rho_0>0$ be fixed. We say that the family $\mathcal{F}$ is locally $(C_0,\rho_0)$-contractible provided that for every $(X,\dist)\in\mathcal{F}$, every $x\in X$ and every $0<r<\rho_0$ the inclusion $B_r(x)\to B_{C_0r}(x)$ is homotopic to a constant map in $B_{C_0r}(x)$. 
 \end{definition}

K. Grove and P. Petersen proved in \cite{GrovePetersen88} that the collection $\mathcal{F}_{v,n}$ of all Riemannian manifolds $(M^n,g)$ with sectional curvature $\ge -1$ and volume of each unit ball $\ge v$ is locally linearly $(C_0(n,v),\rho_0(n,v))$-contractible. See also \cite{Petersen90}. By a scaling argument, it follows that in the class of complete $(M^n,g)$ with $\mathrm{Sect}\ge 0$ and Euclidean volume growth, one can take $\rho_0=+\infty$ and $C_0$ depending only on the asymptotic volume ratio. This applies also to the limit slices $(\Sigma,\dist_{\Sigma})$ arising in \autoref{lemma:Sigmaplane}, although they only have $\mathrm{Sect}\ge 0$ in the Alexandrov sense. See Perelman's \cite{Perelman93} for a more general statement. 
\medskip

We can combine the linear contractibility of Alexandrov surfaces with $\mathrm{Sect}\ge 0$ and quadratic volume growth with \autoref{lemma:Sigmaplane} and \autoref{thm:slicingBPS} (iii). The result is that the good slices of $(v_i,u_i)$ in the $M_i$'s are locally uniformly linearly contractible along the sequence.

\begin{proposition}{\cite[Proposition 7.1]{BruePigatiSemola}}\label{prop:unifcontrslices}
Under the same assumptions and with the same notation of \autoref{thm:slicingBPS}, there exist $C_0=C_0(v)>0$, $\rho_0=\rho_0(v)>0$ and $i_0\in\N$ such that for every $i\ge i_0$ and for every $(x,y) \in  B_1(0^{n-3}) \times [0,10]$ with
		\begin{equation}
			8 \le \sqrt{|x|^2 + y^2}\le 9 \, , \quad
			(x,y)\notin \mathcal{B}_i \, ,
		\end{equation}
		the level set $\{(v_i,u_i)=(x,y)\}\subset M_i$ endowed with the restriction of the ambient distance of $M_i$ is a locally $(C_0,\rho_0)$-contractible closed surface.
\end{proposition}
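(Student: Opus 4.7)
The plan is to argue by contradiction and a blow-up analysis, combining the ``good splitting'' provided by Theorem \ref{thm:slicingBPS}(iii) with the structural result of Lemma \ref{lemma:Sigmaplane}. Suppose the conclusion fails. Then one extracts sequences $C_{0,j}\to\infty$, $\rho_{0,j}\to 0$, indices $i_j$, parameters $(x_j,y_j)$ as in (ii), points $q_j$ in the slice $S_{i_j}:=\{(v_{i_j},u_{i_j})=(x_j,y_j)\}$, and radii $r_j\in(0,\rho_{0,j})$ for which the inclusion $B_{r_j}(q_j)\cap S_{i_j}\hookrightarrow B_{C_{0,j}r_j}(q_j)\cap S_{i_j}$ is not null-homotopic. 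Rescaling the ambient manifolds by $r_j^{-1}$, and appealing to Theorem \ref{thm:slicingBPS}(iii) to find an $\epsilon_{i_j}$-splitting $L_{q_j,r_j}\circ(v_{i_j},u_{i_j})$ on $B_{r_j}(q_j)$, we obtain a sequence $(M_{i_j},r_j^{-1}\dist_{i_j},q_j)$ with $\Ric\ge -r_j^2\delta_{i_j}\to 0$, with $n-2$ almost-coordinate directions vanishing at the basepoint, and with uniform noncollapsing at unit scale inherited from $\mathrm{vol}(B_1(p_i))\ge v$ via Bishop--Gromov. A diagonal subsequence pGH-converges to a noncollapsed Ricci limit $(\R^{n-2}\times\Sigma_\infty,o_\infty)$, and the rescaled slices $S_{i_j}$ converge in GH on bounded sets to $\{0^{n-2}\}\times\Sigma_\infty$, as they coincide (after normalising) with the zero level set of the splitting map.

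Next I would identify $\Sigma_\infty$. The lower bound $\mathrm{vol}(B_1(p_i))\ge v$ together with Bishop--Gromov monotonicity forces the asymptotic volume ratio of every blow-up of the $M_{i_j}$'s to be bounded below by some $c(v)>0$; in particular the product $\R^{n-2}\times\Sigma_\infty$ has Euclidean volume growth with ratio $\ge c(v)$, and the Fubini-type decomposition of the product metric yields the quadratic lower bound $\haus^2(B_R(\sigma_\infty))\ge c'(v)R^2$ for every $R>0$. Hence Lemma \ref{lemma:Sigmaplane} applies and $\Sigma_\infty$ is an Alexandrov surface with $\mathrm{Sect}\ge 0$ homeomorphic to $\R^2$, with a uniform lower bound on the asymptotic volume ratio. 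The Grove--Petersen theorem \cite{GrovePetersen88}, together with Perelman's extension to Alexandrov spaces \cite{Perelman93}, then yields a constant $C_*=C_*(v)$ such that every ball $B_\rho(\sigma)\subset\Sigma_\infty$ is contractible inside $B_{C_*\rho}(\sigma)$.

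The main obstacle is the transfer of this linear contractibility back to the approximating slices $S_{i_j}$, since bare Gromov--Hausdorff convergence does not preserve contractibility. To overcome it one must exploit that Theorem \ref{thm:slicingBPS}(iii) provides $\epsilon_{i_j}$-splittings not only at $q_j$ but at \emph{every} point of $S_{i_j}$ and at \emph{every} scale below $c(\epsilon_{i_j},n,v)$. The associated $(n-2)$-dimensional foliations by level sets furnish local charts in which the slice is quantitatively modelled on the corresponding blow-up of $\Sigma_\infty$, with errors controlled uniformly along the sequence. Fixing a null-homotopy $H\colon(\overline{B}_1(o_\infty)\cap\Sigma_\infty)\times[0,1]\to\overline{B}_{C_*}(o_\infty)\cap\Sigma_\infty$ and covering its image by finitely many such charts, one constructs approximate lifts of $H$ on each chart using the smoothly embedded structure of $S_{i_j}$ (Remark \ref{rm:noncrit}), and glues them via a partition of unity on the slice into a genuine null-homotopy of $B_{r_j}(q_j)\cap S_{i_j}$ inside $B_{C_{0,j}r_j}(q_j)\cap S_{i_j}$ as soon as $C_{0,j}>2C_*$ and $j$ is large enough. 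This contradicts the choice of the sequence and proves the proposition with $C_0=2C_*(v)$ and $\rho_0=\rho_0(v)$ implicit in the scale threshold of Theorem \ref{thm:slicingBPS}(iii).
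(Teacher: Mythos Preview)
Your blow-up strategy, the identification of $\Sigma_\infty$ via Lemma~\ref{lemma:Sigmaplane}, and the appeal to the uniform linear contractibility of nonnegatively curved Alexandrov planes (Grove--Petersen/Perelman) match the paper's sketch exactly. The gap is in the transfer step. Homotopies are not linear objects, so ``gluing via a partition of unity'' does not produce a null-homotopy; nor does the level-set foliation give charts on the slice in any sense that would let you lift $H$ locally. The standard mechanism to pass contractibility from a GH limit back to approximants is Petersen's skeleton-by-skeleton extension \cite{Petersen90}, and that requires the approximants to be uniformly locally contractible at all \emph{smaller} scales---which is exactly the statement being proved.

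The way out of this circularity is a point-selection at the critical scale. Fix $C_0:=2C_*(v)$ and, assuming the conclusion fails, choose $q_j$ and $r_j$ so that $(C_0,\cdot)$-contractibility of the slice fails at $q_j$ at scale $r_j$ but \emph{holds} at every point of the slice and every scale $\le r_j/2$; such a choice exists because each slice is a smooth closed surface (Remark~\ref{rm:noncrit}), hence locally Euclidean at sufficiently small scale, so the infimum of bad scales is positive. After rescaling by $r_j^{-1}$ the slice carries a uniform $(C_0,1/2)$-contractibility, Theorem~\ref{thm:slicingBPS}(iii) forces the rescaled ambient to converge to $\R^{n-2}\times\Sigma_\infty$ as in Lemma~\ref{lemma:Sigmaplane}, and Petersen's extension now legitimately transfers the $(C_*,\infty)$-contractibility of $\Sigma_\infty$ to the slice at unit scale, contradicting the choice of $r_j$. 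In short, (iii) enters by ensuring every such blow-up lands in Lemma~\ref{lemma:Sigmaplane}; it does not provide charts for a direct lift.
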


The rough idea of the proof of \autoref{prop:unifcontrslices} is that the failure of local uniform contractibility can be ruled out by a blow-up argument, exploiting \autoref{lemma:Sigmaplane}. 

\medskip

An important insight due to Petersen in \cite{Petersen90} is that on a locally uniformly contractible family of finite dimensional metric spaces, the homotopy type is stable under Gromov-Hausdorff convergence. With the help of these tools, the completion of the proof of \autoref{thm:noRP2BPS} is virtually identical to that of \autoref{thm:codim4}. 

\begin{proof}[Sketch of the proof of \autoref{thm:noRP2BPS}]
Any sequence of good slices $\{(v_i,u_i)=(x_i,y_i)\}$, endowed with the restrictions of the ambient distances of $(M_i,g_i)$ converges to the section $(Y,\dist_Y)$, up to subsequences and scaling. By \autoref{prop:unifcontrslices}, such slices are closed surfaces homotopically equivalent to $Y$ for every sufficiently large $i\in \N$. Moreover, we can assume without loss of generality that $x_i$ is a regular value of $v_i$ for each $i\in \N$. Hence the slice $\{(v_i,u_i)=(x_i,y_i)\}$ bounds the embedded $3$-manifold $\{v_i=x_i\}\cap\{u_i\le y_i\}$. Therefore the slices cannot be homeomorphic to $\mathbb{RP}^2$. Hence, $Y\approx S^2$.
\end{proof}

Arguing in a similar way, we can prove the following:

\begin{proposition}{\cite[Proposition 9.4]{BruePigatiSemola}}\label{prop:loccont3d}
Under the same assumptions and with the same notation of \autoref{prop:Greensphere} the following hold:
\begin{itemize}
	\item[(i)] there exist $\rho_0=\rho_0(v)>0 0$ and $\overline C = \overline C(v)>0$ such that, whenever $q\in\mathbb{S}_r(p)$ and $0< s\le\rho_0 r$, $B_s(q)\cap \mathbb{S}_r(p)$ is 2-connected in $B_{\overline C s}(q)\cap\mathbb{S}_r(p)$;
		\item[(ii)] $\mathbb{S}_r(p)\approx Z$. Hence they are both either homeomorphic to $S^2$ or to $\mathbb{RP}^2$.
	\end{itemize}
\end{proposition}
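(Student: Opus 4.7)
The proof will mirror the strategy behind \autoref{prop:unifcontrslices}, specialised to the $3$-dimensional $\RCD$ setting, with \autoref{prop:Greensphere} replacing \autoref{thm:slicingBPS}. The plan is to first establish (i) by a blow-up contradiction exploiting the $\eta$-splitting behaviour of the Green distance along the slice, and then deduce (ii) from (i) by combining Petersen's homotopy-type stability theorem with the classification of closed surfaces.

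For (i), I would argue by contradiction. If no uniform $\bar C$ existed, one could extract pointed $\RCD(-\eps_i,3)$ spaces $(X_i,\dist_i,q_i)$ with $q_i\in\mathbb{S}_{r_i}(p_i)$ and scales $s_i\to 0$ such that a loop in $B_{s_i}(q_i)\cap\mathbb{S}_{r_i}(p_i)$ fails to contract in $B_{is_i}(q_i)\cap\mathbb{S}_{r_i}(p_i)$. Rescaling by $s_i^{-1}$ and invoking \autoref{prop:Greensphere}(iv) together with the stability of the $\RCD$ condition, the blown-up spaces converge in the pGH sense to an $\RCD(0,3)$ space of the form $\R\times\Sigma$, with the $\R$ factor induced by the limit of the renormalised Green distance. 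By the stability of $\RCD$ under splittings, $(\Sigma,\dist_\Sigma,\haus^2)$ is an $\RCD(0,2)$ space, and the argument of \autoref{lemma:Sigmaplane} --- based on Lytchak-Stadler, the no-boundary stability \cite[Theorem 6.2]{CheegerColding97I}, and the quadratic volume growth inherited from the ambient cone structure at the top scale --- forces $\Sigma$ to be an Alexandrov surface with $\mathrm{Sect}\ge 0$ homeomorphic to $\R^2$. The limiting slice then coincides with $\{0\}\times\Sigma\approx\R^2$, hence is contractible. Combined with the Grove-Petersen/Perelman linear contractibility in the class of Alexandrov surfaces with $\mathrm{Sect}\ge 0$ and controlled asymptotic volume ratio, this contradicts the non-contractibility of the loops and yields $\bar C$ and $\rho_0$ depending only on $v$.

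For (ii), once (i) is established, the rescaled slice $(r^{-1}\mathbb{S}_r(p),\dist)$ and the limit $(Z,\tilde\dist_Z)$ from \autoref{prop:Greensphere}(ii) both belong to a locally uniformly $(C_0,\rho_0)$-contractible family of finite-dimensional metric spaces. Petersen's stability theorem for homotopy types under GH convergence then gives that $\mathbb{S}_r(p)$ is homotopy equivalent to $Z$. Since $\mathbb{S}_r(p)$ is a closed topological surface by \autoref{prop:Greensphere}(iii) and $Z$, being an Alexandrov surface with curvature $\ge 1$ and empty boundary, is also a closed $2$-manifold, and since homotopy equivalence implies homeomorphism for closed surfaces by the surface classification theorem, we conclude $\mathbb{S}_r(p)\approx Z$. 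Finally, the generalized Bonnet-Myers argument recalled in Section~\ref{sec:prelle3} restricts a closed Alexandrov surface with curvature $\ge 1$ and no boundary to be either $S^2$ or $\mathbb{RP}^2$.

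The main obstacle will be part (i): translating the splitting control of the \emph{ambient} Green distance into a near-product description of the slice $\mathbb{S}_r(p)$ itself at small scales. The slice is a level set of $b_p$ and not an a priori factor of the splitting, so one must verify that in the blow-up it truly converges to $\{0\}\times\Sigma$ rather than to some degenerate subset. This is where the noncriticality of $b_p$ on $\mathbb{S}_r(p)$ (cf.\ \autoref{rm:noncrit}) and the quantitative lower bound on $\fint_{B_s(q)}|\nabla b_p|$ encoded in \autoref{prop:Greensphere}(iv) are essential, and I expect the careful tracking of constants needed to make all estimates uniform in $v$ --- rather than in the scale $r$ or the individual space $X$ --- to be the most delicate part of the argument.
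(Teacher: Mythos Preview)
Your proposal is correct and matches the paper's approach: the paper only says ``Arguing in a similar way'' (i.e., as in the proof of \autoref{prop:unifcontrslices} and \autoref{thm:noRP2BPS}), and you have correctly unpacked this into the blow-up contradiction via \autoref{prop:Greensphere}(iv) and \autoref{lemma:Sigmaplane} for (i), followed by Petersen's homotopy stability plus the classification of closed surfaces for (ii). Your identification of the main technical point --- that the slice genuinely converges to $\{0\}\times\Sigma$ under blow-up, which hinges on the noncriticality and gradient lower bound for $b_p$ --- is exactly the delicate issue one must handle to make the argument rigorous.
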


\subsection{The manifold recognition \autoref{thm:manrecRCD}}

Unfortunately, in the context of \autoref{thm:manrecRCD}, we are not able to construct homeomorphisms between blow-ups and neighbourhoods of points directly. Luckily, this is not the only circumstance where one would like to prove that some metric space is a topological manifold without being able to construct manifold charts directly. We address the reader to the survey papers of J.-W. Cannon \cite{Cannon}, D. Repovš \cite{Repovs}, and A. Cavicchioli, Repovš, and T.-L. Thickstun \cite{CavRepThi}, for an enlightening discussion about the manifold recognition problem, of which \autoref{thm:manrecRCD} is an instance.
 \medskip
 
 The most delicate implication in the proof of \autoref{thm:manrecRCD} is the one from the assumption on the blow-ups to the manifold regularity of $X$. The roadmap for that is as follows:
\begin{itemize}
\item[i)] establish the local (uniform) simple-connectedness of $(X,\dist)$;
\item[ii)] establish the local (uniform) contractibility of $(X,\dist)$;
\item[iii)] prove that $X$ is a \textbf{generalized $3$-manifold};
\item[iv)] prove that $X$ is a topological $3$-manifold.
\end{itemize}

\begin{definition}[Generalized manifold]\label{def:genman}
Let $(X,\dist)$ be a metric space. We say that $X$ is a \emph{generalized $n$-manifold} if it is locally compact, locally contractible, finite-dimensional (in the sense of the covering dimension) and it has the local relative homology of $\setR^n$, i.e., the groups $H_{*}(X,X\setminus\{x\};\Z)$ are isomorphic to $H_{*}(\setR^n,\setR^n\setminus\{0\};\Z)$ for all $x\in X$. 
\end{definition}

Clearly, (i), (ii), and (iii) are necessary conditions for being a topological $3$-manifold. The proof, however, is designed in such a way that to complete each step it is necessary that we have already completed the previous ones. We will discuss each step separately below, with the help of some toy models. 

The starting point for the proof is that any $\RCD(-2,3)$ space is already known to be a manifold on a \emph{large} dense set. We will exploit the assumption on the blow-ups to see that taking the completion does not mess up the manifoldness.
 
Before we move to the outline, it is worth making the above sentence more precise. Kapovitch and A. Mondino proved in \cite[Theorem 1.7]{KapovitchMondino} that for any $\RCD(-(n-1),n)$ space $(X,\dist,\haus^n)$ with empty boundary the following holds. If we let 
\begin{equation*}
 \mathcal{R}_{\eps}:=\{ x\in X\, :\, \dist_{\mathrm{pGH}}(\R^n,(Y,\dist_Y,y))<\eps\,\,  \text{if}\,\,   (Y,\dist_Y,y)\in \mathrm{Tan}_x(X,\dist) \}\, ,
\end{equation*}
then, for $0<\eps<\eps(n)$, $\mathcal{R}_{\eps}$ is open and dense in $X$, $\mathrm{dim}_{\haus}(X\setminus \mathcal{R}_{\eps})\le n-2$, and $\mathcal{R}_{\eps}$ is biH\"older homeomorphic to a smooth Riemannian manifold. The proof is based on two ingredients: 
\begin{itemize}
\item the $\eps$-regularity theorem for spaces with Ricci bounded below, originally due to Colding, and Cheeger and Colding for smooth manifolds and Ricci limits, see \cite[Theorem 9.67]{CheegerFermi}, and generalized to $\RCD$ spaces by Kapovitch and Mondino in \cite{KapovitchMondino};
\item the metric Reifenberg theorem, due to Cheeger and Colding in \cite[Theorem A.1.1]{CheegerColding97I}.
\end{itemize}
For $n=3$, we see that there is a gap of one dimension between the topological regularity obtained with these techniques and the full topological regularity we aim at. In particular, there are certainly examples where $X\setminus \mathcal{R}_{\eps}$ is a $1$-dimensional set, and the metric Reifenberg theorem does not apply therein.

\begin{remark}
It is expected that $X\setminus \mathcal{R}_{\eps}$ is $1$-rectifiable with locally finite $\haus^1$-measure for every $\RCD(-2,3)$ space $(X,\dist,\haus^3)$. This is known, thanks to the work of Cheeger, W. Jiang, and Naber \cite{CheegerJiangNaber}, when $(X,\dist,\haus^3)$ is a noncollapsed Ricci limit, or the cross-section of a blow-down as in \autoref{thm:mainIntro}. However, this information would not simplify the present proof of \autoref{thm:manrecRCD}.
\end{remark}

Given an $\RCD(-2,3)$ space $(X,\dist,\haus^3)$, we define the \textbf{non-manifold set} $\mathcal{S}_{\rm{top}}(X)=\mathcal{S}_{\rm{top}}$ as the set of those points in $X$ with no neighbourhood homeomorphic to $\R^3$. Clearly, $\mathcal{S}_{\rm{top}}$ is a closed subset of $X$. The discussion above can be rephrased by saying that $\mathcal{S}_{\rm{top}}\subset X\setminus\mathcal{R}_{\eps}$ has Hausdorff dimension at most $1$. We will argue that $\mathcal{S}_{\rm{top}}=\emptyset$ provided that every blow-up of $(X,\dist)$ is homeomorphic to $\R^3$. 

\subsection{Local simple-connectedness}\label{subsec:loc1c} 
 
The first step of the proof of \autoref{thm:manrecRCD} is to establish the following:

\begin{proposition}\label{prop:1conn}
Let $v>0$. There exist $C=C(v)>0$ and $\rho=\rho(v)>0$ such that the following holds. If $(X,\dist,\haus^3)$ is an $\RCD(-2,3)$ space with $\haus^3(B_1(p))\ge v$ for any $p\in X$ and such that the cross-sections of all blow-ups are homeomorphic to $S^2$, then $B_r(p)$ is $1$-connected inside $B_{Cr}(p)$ for every $r\le\rho$ and every $p\in X$.
\end{proposition}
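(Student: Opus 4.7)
The plan is to argue by contradiction and compactness, reducing the statement to a 1-connectedness claim inside the Green-distance sublevel sets produced by Proposition \ref{prop:Greensphere}.

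First, assume the conclusion fails. Then there exist $\RCD(-2,3)$ spaces $(X_i,\dist_i,\haus^3)$ satisfying the hypotheses with the same $v$, points $p_i\in X_i$, radii $r_i$, and loops $\gamma_i\subset B_{r_i}(p_i)$ which are not null-homotopic inside $B_{C_ir_i}(p_i)$, with $r_i\to 0$ and $C_i\to\infty$. After rescaling so that $r_i=1$, the sequence becomes noncollapsed $\RCD(-2r_i^2,3)$ spaces with $r_i^2\to 0$. By Gromov precompactness, pass to a pGH-subsequential limit $(Y,d_Y,y)$, a noncollapsed $\RCD(0,3)$ space. A diagonal argument then shows that every blow-up of $(Y,d_Y)$ is a pGH-limit of blow-ups of the $X_i$'s at nearby points, so the hypothesis transfers to $Y$: every blow-up of $Y$ is homeomorphic to $\R^3$, and in particular the Alexandrov cross-section $Z$ appearing when applying Proposition \ref{prop:Greensphere} to $(Y,d_Y)$ can be taken homeomorphic to $S^2$.

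Next, apply Proposition \ref{prop:Greensphere} at $y\in Y$ at a fixed cone scale. This produces a Green distance $b_y:B_2(y)\to\R$ and a set of radii $r\in(1/4,1)$ of positive measure for which the Green sphere $\mathbb{S}_r(y)$ is a closed topological surface homeomorphic to $S^2$, bounding the sublevel set $\mathbb{B}_r(y)\subset B_{(1+\eta)r}(y)$. Since $\mathbb{S}_r(y)\approx S^2$ is simply connected, any loop supported on the Green sphere bounds a disk inside $\mathbb{S}_r(y)$. The core technical step is then to homotope an arbitrary loop $\gamma\subset B_{1/2}(y)$ onto some $\mathbb{S}_r(y)$ inside a ball $B_C(y)$ with $C=C(v)$. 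I would combine two ingredients: the almost-splitting property of $b_y$ at all scales along the Green sphere given by Proposition \ref{prop:Greensphere}(iv), which in a tubular neighbourhood of $\mathbb{S}_r(y)$ makes $b_y$ behave as a coordinate function and permits a gradient-like outward push; and the uniform local 2-connectedness of Green spheres from Proposition \ref{prop:loccont3d}(i), which patches local pushes coherently into a global homotopy. Once $\gamma$ lies on $\mathbb{S}_r(y)\approx S^2$, a null-homotopy exists within $B_{(1+\eta)r}(y)$. Transferring this bound back to the rescaled $X_i$'s via Petersen-style stability of the homotopy type under GH-convergence of uniformly contractible families contradicts the non-contractibility of $\gamma_i$ as soon as $C_i>2C$.

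The main obstacle is the \emph{push-to-boundary} step. In a purely metric $\RCD$ setting there is no intrinsic radial flow, and one must exploit the almost-splitting of $b_y$ very carefully to synthesize the local Euclidean-like projections into a globally well-defined deformation that avoids the singular locus. A possibly cleaner route uses the fact that the non-manifold set $\mathcal{S}_{\rm top}$ has Hausdorff dimension at most $1$: generically perturb $\gamma$ and its putative null-homotopy to miss $\mathcal{S}_{\rm top}$, and execute the contraction inside the smooth Riemannian part $\mathcal{R}_\varepsilon$, where the classical gradient flow of $b_y$ can be used. Either route is delicate and constitutes the technical heart of the proof.
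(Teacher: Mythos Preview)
Your proposal has two genuine gaps, and the paper proceeds by a different route that sidesteps both.

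The first gap is the transfer step at the end. Petersen-style stability of homotopy type under Gromov--Hausdorff convergence requires the approximating spaces to be locally uniformly contractible (at least locally uniformly $1$-connected, to extend a disk over the $2$-cells of a fine triangulation). That is precisely what you are trying to prove about the $X_i$'s, so invoking it here is circular. Without it you cannot lift a null-homotopy from the limit $Y$ back to the rescaled $X_i$.

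The second gap is the push-to-boundary step, which you correctly flag as the heart of the matter but do not resolve. The almost-splitting in Proposition~\ref{prop:Greensphere}(iv) only controls $b_y$ in a tubular neighbourhood of the Green-sphere, not across the whole Green-ball where your loop sits; and the retraction of the punctured Green-ball onto its boundary sphere is Lemma~\ref{lemma:retract}, whose proof uses the local uniform contractibility of $X$ (Proposition~\ref{prop:loccontr}), itself built by induction on top of Proposition~\ref{prop:1conn}. The gradient-flow alternative on $\mathcal{R}_\varepsilon$ is also not available: $\mathcal{R}_\varepsilon$ is only biH\"older homeomorphic to a smooth manifold, so a classical flow for $b_y$ is not at hand.

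The paper avoids both issues. It works directly on $X$ (no compactness step): Lemma~\ref{lemma:conical} gives, at every point and below a fixed scale, an intermediate radius at which the ball is $\varepsilon$-close to a cone $C(Y)$, and one argues that $Y\approx S^2$ (ruling out $\overline{D}^2$ by boundary stability and $\mathbb{RP}^2$ by a separate argument). Then Proposition~\ref{prop:conS21c} shows the associated good Green-ball $\mathbb{B}_r(p)$ is simply connected via a \emph{covering-space argument}, localising the toy model of Proposition~\ref{prop:global1con}: lift $b_p$ to the universal cover of $\mathbb{B}_r(p)$, observe that the lifted good Green-spheres are again closed surfaces bounding a region (hence $\approx S^2$), and conclude that the deck group acts on an $S^2$ with quotient $S^2$, so must be trivial. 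No direct loop contraction and no homotopy transfer are needed.
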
 
 
\begin{remark}
J. Wang proved in \cite{WangRCD} that every $\RCD$ space $(X,\dist,\meas)$ is semi-locally simply connected. It is an open question whether $\RCD$ spaces are locally simply connected in general.
\end{remark} 
 
The key idea of the proof of \autoref{prop:1conn} is better illustrated with the help of a toy model.

\begin{proposition}\label{prop:global1con}
Let $(M^3,g)$ be smooth, complete with $\Ric\ge 0$ and Euclidean volume growth. Then $M$ is simply-connected. 
\end{proposition}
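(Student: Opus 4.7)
\emph{Plan.} I would adapt the Green's function and slicing strategy that the paper deploys for \autoref{prop:1conn}, using it globally to show directly that $M\approx\R^3$, from which $\pi_1(M)=1$ follows at once. By \autoref{thm:3d}, any blow-down of $(M^3,g)$ is a metric cone $C(Z)$ with cross-section $Z\approx S^2$, so $C(Z)\approx\R^3$; the goal is to transport this topological information back to $M$ via a single globally-defined proper function.

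Since $(M^3,g)$ has Euclidean volume growth, it is non-parabolic, so there is a positive Green's function $G_p$ of the Laplacian with pole at $p$ vanishing at infinity. Setting $b_p:=c_3\,G_p^{-1}$, the function $b_p(x)$ is comparable to $\dist(p,x)$ near $p$ and asymptotic to $\dist(p,x)$ at infinity, by \autoref{thm:3d} combined with the Green's function estimates of \cite{ColdingGreen}. Applying the slicing method of \autoref{thm:slicingBPS} to $b_p$ along a sequence of blow-downs, one obtains a positive measure set of arbitrarily large regular values $r$ for which the level set $\mathbb{S}_r:=\{b_p=r\}$ is a smooth closed surface, locally uniformly contractible at all scales comparable to $r$, and Gromov--Hausdorff close (after rescaling by $r$) to the cross-section $Z\approx S^2$. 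As in the proof of \autoref{thm:noRP2BPS}, the $\mathbb{RP}^2$ alternative is excluded since $\mathbb{S}_r$ bounds the compact orientable $3$-manifold $\{b_p\le r\}$, whose boundary has even Euler characteristic. Hence $\mathbb{S}_r\approx S^2$ for a cofinal family of regular values.

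To upgrade topological information from the boundary to the whole sublevel set, I would rule out critical points of $b_p$ on the annular region $\{r_0<b_p<r\}$ for some fixed small $r_0$. A blow-up argument shows that any rescaling of $b_p$ at a point and scale in this annular region subconverges to the distance function from the vertex on the limit cone $C(Z)\approx\R^3$, whose gradient is uniformly non-vanishing away from the vertex. Combining this with the $\epsilon$-splitting estimates provided by \autoref{thm:slicingBPS} yields a uniform lower bound on $|\nabla b_p|$ on the annular region. By Morse theory, $\{b_p\le r\}$ is then diffeomorphic to $\{b_p\le r_0\}$, which for $r_0$ smaller than the injectivity radius at $p$ is a smooth geodesic ball, hence a $3$-disk. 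Taking $r\to\infty$ along the family of good regular values produces an exhaustion $M=\bigcup_r\{b_p\le r\}\approx\R^3$, proving $\pi_1(M)=1$.

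\emph{Main obstacle.} The delicate step is the uniform absence of critical points of $b_p$: a naive pointwise blow-up only guarantees nondegeneracy of the limit gradient at each chosen scale, while what is needed is a scale-invariant lower bound on $|\nabla b_p|$ throughout the annular region. This is exactly the kind of quantitative, multi-scale control that the slicing machinery developed in \cite{BruePigatiSemola} (mirroring the $\epsilon$-regularity philosophy of \cite{CheegerNaber15}) is designed to provide, which is why this statement is presented as a toy model for the localised and non-smooth \autoref{prop:1conn}.
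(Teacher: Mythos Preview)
Your approach differs from the paper's and contains a genuine gap at precisely the step you flag as the ``main obstacle''.

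The paper does not attempt to show $M\approx\R^3$ directly, nor does it need any gradient lower bound on $b_p$. Instead it uses the finiteness of $\pi_1(M)$ (Li \cite{Lilarge}, Anderson \cite{Andersonpi1}) and passes to the universal cover $\pi:\overline{M}\to M$. The slicing argument is applied twice --- once on $M$ to the Green function $G$, and once on $\overline{M}$ to the lift $\overline{G}:=G\circ\pi$ --- to produce good values $s_i\to 0$ for which both $\{G=s_i\}\subset M$ and $\{\overline{G}=s_i\}\subset\overline{M}$ are homeomorphic to $S^2$. Since $\pi_1(M)$ acts freely on $\{\overline{G}=s_i\}\approx S^2$ by deck transformations with quotient $\{G=s_i\}\approx S^2$, comparing Euler characteristics forces $|\pi_1(M)|=1$. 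This uses only the existence of \emph{some} good level sets, never the absence of critical points in between.

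Your Morse-theoretic route, by contrast, requires $|\nabla b_p|>0$ throughout the entire annulus $\{r_0<b_p<r\}$. The slicing machinery does not deliver this: \autoref{thm:slicingBPS}(iii) controls the map at all scales \emph{only along level sets corresponding to a large-measure set of values}, and critical points may well lie between good slices. The paper says so explicitly in the remark following \autoref{prop:Greensphere}: even in the smooth case it is not clear whether $b_p$ induces a trivial fibration on an annulus, and such a result would vastly simplify the argument. Your blow-down argument only gives that $b_p/r$ converges to the cone distance in the uniform and $W^{1,2}$ sense at scale $r$; this is not $C^1$ convergence, does not yield a pointwise lower bound on $|\nabla b_p|$, and cannot rule out isolated critical points. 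The step ``by Morse theory, $\{b_p\le r\}$ is diffeomorphic to $\{b_p\le r_0\}$'' is therefore unjustified, and the gap is not one that the tools of \cite{BruePigatiSemola} are designed to close.
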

 
\begin{proof}[Sketch of the proof]
We fix $p\in M$ and let $G:M\setminus\{p\}\to (0,\infty)$ be the (unique) positive Green's function of the Laplacian with pole at $p$, i.e., $G$ solves $\Delta G=-\delta_p$ on $M$ in the sense of distributions. We can apply the slicing \autoref{thm:slicingBPS} and \autoref{prop:unifcontrslices} to the function $G^{-1}$ (suitably rescaled and normalized) along any sequence $(M,r_i^{-2}g,p)$ converging to some blow-down $C(Y)$ of $(M,g)$. Arguing as in the proof of \autoref{thm:noRP2BPS}, we obtain a sequence $s_i\to 0$ such that the level sets $\{G=s_i\}$ are all homeomorphic to $Y$. Since the level sets bound the respective super level sets, they are all homeomorphic to $S^2$.

Note that $M$ has finite $\pi_1$, as shown by P. Li \cite{Lilarge}, and, independently, Anderson \cite{Andersonpi1}. We consider the universal cover $\pi:\overline{M}\to M$, where $\pi_1(M)$ acts by deck transformations. Such universal cover has $\Ric\ge 0$ and Euclidean volume growth as well. Since $\pi_1(M)$ is finite, we can apply the same slicing argument as before to $\overline{G}:=G\circ\pi:\overline{M}\to (0,\infty)$ and check that the level sets $\{\overline{G}=s_i\}$ are homeomorphic to the cross-section of the blow-downs of $\overline{M}$. For the same reasons as above, this implies that they are homeomorphic to $S^2$. Since $\pi_1$ acts by deck transformations on each $\{\overline{G}=s_i\}$ with quotient $\{G=s_i\}$, $\pi_1$ must be trivial, i.e., $M$ is simply connected.
\end{proof} 

\begin{remark}
The conclusion of \autoref{prop:global1con} is well known. However, the original proof in \cite{Zhu93} argues that the universal cover is contractible in the first place. This is enough to rule out the torsion in the fundamental group for topological reasons and, hence, to complete the proof. To prove the contractibility of the universal cover, the idea, originally due to R. Schoen and S.-T. Yau in \cite[Section 2, Lemma 2]{YauSeminar82}, is to combine the sphere theorem in $3$-manifolds topology with the Cheeger-Gromoll splitting theorem. This strategy heavily relies on the information that $M$ is a (topological) $3$-manifold. As such, it is not feasible for our roadmap.
\end{remark}

The argument that we sketched above is robust in the sense that it uses very little of the regularity of $M$, and it can be localized. Namely we can use it to prove the following: 

\begin{proposition}{\cite[Proposition 9.15]{BruePigatiSemola}}\label{prop:conS21c}
There exists $\eps>0$ such that if $(X,\dist,\haus^3)$ is an $\RCD(-\eps,3)$ space, $p\in X$ and 
\begin{equation}
\dist_{\mathrm{GH}}\left(B_{20}(p),B_{20}(o)\right)<\eps\, ,
\end{equation}
where $B_{20}(o)$ is the ball centred at a vertex of some cone $C(Y)$ with $Y\approx S^2$, then there is a simply connected domain $\mathbb{B}$ such that 
\begin{equation}
B_{1-\eps}(p)\subset \mathbb{B}\subset B_{1+\eps}(p)\, .
\end{equation}
\end{proposition}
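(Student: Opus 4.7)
The natural candidate is $\mathbb{B}:=\mathbb{B}_r(p)=\{b_p<r\}$ for a good radius $r$ close to $1$, where $b_p$ is the Green distance provided by \autoref{prop:Greensphere}. Choosing $\eta$ sufficiently small and then $\eps$ small relative to $\eta$, the set of good radii in \autoref{prop:Greensphere} has positive measure in $(1-\eps,1+\eps)$, so we can select such an $r$ with the sandwich $B_{(1-\eta)r}(p)\subset\mathbb{B}\subset B_{(1+\eta)r}(p)$ and $\partial\mathbb{B}=\mathbb{S}_r(p)$ a closed topological surface. By \autoref{prop:loccont3d}(ii) $\mathbb{S}_r(p)\approx Z$, and the hypothesis $Y\approx S^2$ forces $Z\approx S^2$, hence $\mathbb{S}_r(p)\approx S^2$. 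Absorbing $\eta$ into $\eps$ gives the required sandwich $B_{1-\eps}(p)\subset\mathbb{B}\subset B_{1+\eps}(p)$.

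The heart of the matter is to show $\mathbb{B}$ is simply connected, and for this I would localize the argument sketched in \autoref{prop:global1con}. Suppose for contradiction that $G:=\pi_1(\mathbb{B})\ne 0$. By Wang's semi-local simple connectedness for $\RCD$ spaces, $\mathbb{B}$ admits a universal cover $\pi:\tilde{\mathbb{B}}\to\mathbb{B}$, and by \cite{MondinoWei} this cover inherits the $\RCD(-2,3)$ structure with empty boundary. Lifting the Green distance to $\tilde b_p:=b_p\circ\pi$, the level set $\{\tilde b_p=r\}=\pi^{-1}(\mathbb{S}_r(p))$ covers $\mathbb{S}_r(p)\approx S^2$; since every connected cover of a simply connected space is trivial, $\pi^{-1}(\mathbb{S}_r(p))$ decomposes as $|G|$ disjoint homeomorphic copies of $S^2$, bounding $|G|$ ball-like components whose union equals $\pi^{-1}(\mathbb{B})$.

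To conclude $|G|=1$, I would use a volume comparison argument. The GH-closeness at scale $20$ together with Cheeger--Colding volume convergence pins down $\mathcal{H}^3(\mathbb{B})$ up to error $o_\eps(1)$ as the volume of the corresponding ball $\mathbb{B}_r(o)\subset C(Y)$. Since $\pi$ has degree $|G|$, one has $\mathcal{H}^3(\tilde{\mathbb{B}})=|G|\cdot\mathcal{H}^3(\mathbb{B})$. On the other hand, near a lift $\tilde p\in\pi^{-1}(p)$ the covering is a local isometry, so the pointed blow-up/tangent structure of $(\tilde{\mathbb{B}},\tilde p)$ coincides with that of $(\mathbb{B},p)$, and hence with the cone $C(Y)$. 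Combining this with the simple connectedness of $\tilde{\mathbb{B}}$ and the fact that the model cone over $S^2$ is itself its own universal cover, a Gromov precompactness argument in the noncollapsed $\RCD(-\eps,3)$ class forces $\tilde{\mathbb{B}}$ itself to be GH-close to $\mathbb{B}_r(o)$. Volume convergence then yields $\mathcal{H}^3(\tilde{\mathbb{B}})\approx\mathcal{H}^3(\mathbb{B}_r(o))\approx\mathcal{H}^3(\mathbb{B})$, forcing $|G|=1$.

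The main obstacle is the last step: propagating the GH-closeness from a neighborhood of a lift $\tilde p$ to the full cover $\tilde{\mathbb{B}}$. This is a quantitative manifestation of the rigidity of simply connected cones under noncollapsed GH-limits, where the noncollapsing lower volume bound is essential to prevent dimension drop and the GH-hypothesis at scale $20$ (rather than only at scale $1$) provides the necessary room to rule out spurious sheets of the cover creating extra volume.
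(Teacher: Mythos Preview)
Your setup through the choice of $\mathbb{B}=\mathbb{B}_r(p)$ and the identification $\mathbb{S}_r(p)\approx S^2$ is correct and matches the paper. The gap is in the simple-connectedness argument, and you yourself flag it: the final step of your volume route---propagating GH-closeness from a neighbourhood of $\tilde p$ to all of $\tilde{\mathbb{B}}$ and then reading off a volume identity---is not justified. There is also a slip: the universal cover $\tilde{\mathbb{B}}$ is connected, so it cannot be a disjoint union of $|G|$ ``ball-like components''; the $|G|$ copies of $S^2$ you produce are boundary components of one connected space, which by itself gives no bound on $|G|$.

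The paper's approach, modelled on \autoref{prop:global1con} and then localized, bypasses volume counting. The key is not merely to pull back $\mathbb{S}_r(p)$ via $\pi$ and observe that the preimage is a disjoint union of spheres (trivial covering theory); rather, one re-runs the slicing machinery of \autoref{prop:Greensphere} and \autoref{prop:loccont3d} \emph{on the cover} with respect to the lifted Green distance $\tilde b_p=b_p\circ\pi$. This shows that a good Green-sphere $\{\tilde b_p=r\}$ on $\tilde{\mathbb{B}}$ is a \emph{single connected} closed surface, homeomorphic to the cross-section of the cone approximating the cover near $\tilde p$; since $Y\approx S^2$ is already simply connected, that cross-section is again $S^2$. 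The deck group then acts freely on this connected $S^2$ with quotient $\mathbb{S}_r(p)\approx S^2$, forcing the group to be trivial. Verifying the hypotheses of \autoref{prop:Greensphere} on the cover does require some control on the cover---related in spirit to what you were reaching for---but the payoff is connectedness of the lifted Green-sphere, not a volume identity, and this is what closes the argument.
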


The domain $\mathbb{B}$ in \autoref{prop:conS21c} is a sublevel set $\{b_p<r\}$. Here $b_p:=G_p^{-1}$ (up to normalization) and $G_p$ is a local Green's function $G_p$ with pole at $p$ such that the corresponding level set $\{b_p=r\}$ is a good slice as obtained through \autoref{prop:Greensphere}. We will call $\mathbb{B}_r(p):=\{b_p<r\}$ a good \textbf{Green-ball} and $\mathbb{S}_r(p):=\{b_p=r\}$ a good \textbf{Green-sphere} in this situation. 

\begin{remark}\label{rm:goodGBsimcon}
In the context of \autoref{prop:conS21c} we can actually prove that every good Green-ball $B_{1/2}(p)\subset\mathbb{B}_r(p)\subset B_1(p)$ is simply connected.
\end{remark}

\begin{remark}
We warn the reader that Green-balls and Green-spheres are not uniquely defined, as they depend on choosing a local Green's function $G_p$ (equivalently, a Green-distance $b_p$). This ambiguity does not cause any trouble.
\end{remark}

The relevance of \autoref{prop:conS21c} depends on the possibility of verifying its assumptions sufficiently often. This is possible thanks to the following lemma, originally due to Cheeger and Colding for smooth Riemannian manifolds, in \cite[Theorem 4.91]{CheegerColding96}, and generalized to $\RCD$ spaces thanks to De Philippis and Gigli's \cite{DePhilippisGigli}:
 
\begin{lemma}\label{lemma:conical}
Let $(X,\dist,\haus^3)$ be an $\RCD(-2,3)$ space such that $\haus^3(B_1(p))>v>0$ for every $p\in X$. For every $\eps>0$ there exist $C=C(\eps,v)>0$ and $\rho=\rho(\eps,v)>0$ such that the following holds. For every $p\in X$ and every $0<r<\rho$ there exists $r<r'<Cr$ such that 
\begin{equation}\label{eq:closeY}
\dist_{\mathrm{GH}}\left(B_{20r'}(p),B_{20r'}(o)\right)<20\eps r'\, ,
\end{equation}
where $B_{2r'}(o)\subset C(Y)$ is the ball centred at a vertex of a cone $C(Y)$, and $(Y,\dist_Y,\haus^2)$ is an $\RCD(1,2)$ space. 
\end{lemma}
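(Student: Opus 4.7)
The plan is to combine the Bishop-Gromov volume monotonicity with De Philippis-Gigli's quantitative ``volume almost cone implies metric almost cone'' rigidity in the $\RCD$ setting, and then identify the cross-section of the resulting cone via Ketterer's theorem.

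First, since $(X,\dist,\haus^3)$ is $\RCD(-2,3)$, Bishop-Gromov tells us that the normalized volume ratio $V(r):=\haus^3(B_r(p))/v_{-1,3}(r)$ is non-increasing in $r$, where $v_{-1,3}(r)$ denotes the volume of a geodesic ball of radius $r$ in the simply connected $3$-dimensional space form of constant sectional curvature $-1$. Combined with $\haus^3(B_1(p))\ge v$, monotonicity yields $c(v)\le V(r)\le \omega_3$ for $r\le 1$, so $V$ is a non-increasing function taking values in a bounded interval. For any $\delta>0$ and any $A>1$, a standard pigeonhole argument then produces constants $C=C(\delta,A)>0$ and $\rho=\rho(\delta,v,A)>0$ such that for every $p\in X$ and every $0<r<\rho$ one can find $r'\in(r,Cr)$ with $V(r'/A)-V(Ar')<\delta$.

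Second, for $A=A(\eps)$ sufficiently large and $\delta=\delta(\eps,v)$ sufficiently small, and after shrinking $\rho$ so that the curvature scale $-2$ is negligible compared to $r'$, the quantitative volume-cone-implies-metric-cone theorem of De Philippis-Gigli \cite{DePhilippisGigli} (extending \cite[Theorem 4.91]{CheegerColding96}) yields a pointed metric cone $(C(Y),o)$ with vertex $o$ satisfying
\begin{equation}
\dist_{\mathrm{GH}}\bigl(B_{20r'}(p),B_{20r'}(o)\bigr)<20\eps r'.
\end{equation}
The cone arises as a pmGH sub-limit of the rescalings $(X,(r')^{-1}\dist,\haus^3(B_{r'}(p))^{-1}\haus^3,p)$; by the stability of the $\RCD(0,3)$ condition under pmGH convergence \cite{GigliMondinoSavare}, $C(Y)$ is an $\RCD(0,3)$ space. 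The uniform noncollapsing $\haus^3(B_1(p))\ge v$ passes to the limit via Colding's volume convergence, so the cone is non-collapsed; in particular the reference measure on $C(Y)$ is a constant multiple of $r^2\di r\otimes \haus^2_Y$, and Ketterer's theorem \cite[Theorem 1.1]{Ketterer15} identifies $(Y,\dist_Y,\haus^2)$ as $\RCD^*(1,2)$, equivalently $\RCD(1,2)$ by \cite{CavallettiMilman}.

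The main subtle point is ensuring that the limit cone $C(Y)$ is genuinely three-dimensional rather than a lower-dimensional collapse, so that Ketterer's theorem delivers an $\RCD(1,2)$ structure on the cross-section endowed with $\haus^2$. This is precisely what the uniform lower volume bound $\haus^3(B_1(p))>v$ buys us: by Bishop-Gromov it propagates to $\haus^3(B_{r'}(p))\gtrsim c(v)(r')^3$ uniformly, which is preserved in the pmGH limit and thus forces $C(Y)$ to be noncollapsed in dimension $3$. Beyond this, the only remaining technicality is tracking the effective dependence of the constants produced by the quantitative volume-cone rigidity on $\eps$ and $v$, so that both $C$ and $\rho$ end up depending only on these two parameters.
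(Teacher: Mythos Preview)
Your proposal is correct and follows essentially the same route as the paper, which attributes the lemma to Cheeger and Colding's almost-volume-cone-implies-almost-metric-cone theorem \cite[Theorem 4.91]{CheegerColding96}, as generalized to $\RCD$ spaces by De Philippis and Gigli \cite{DePhilippisGigli}. Your explicit use of Bishop--Gromov pigeonholing to locate the good scale and of Ketterer's theorem \cite{Ketterer15} (together with noncollapsing and \cite{CavallettiMilman}) to identify the cross-section as $\RCD(1,2)$ spells out details the paper leaves implicit.
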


\begin{proof}[Sketch of proof of \autoref{prop:1conn}]
The conclusion follows from \autoref{lemma:conical} and \autoref{prop:conS21c} provided that we can show that all the sections of the cones appearing in \eqref{eq:closeY} are homeomorphic to $S^2$. Note that such sections cannot be topological disks by the (no-)boundary stability \cite[Theorem 1.6]{BrueNaberSemolabdry}. Moreover, they are homeomorphic to $S^2$ for every sufficiently small $r$, depending on $p$, thanks to the assumption on the blow-ups. In the context of \autoref{thm:mainIntro}, it is possible to rule out sections $Y\approx \mathbb{RP}^2$ exploiting \autoref{thm:noRP2BPS}. For a general $\RCD(-2,3)$ space as in the assumptions of \autoref{thm:manrecRCD}, the argument is more delicate, and we omit it.
\end{proof}


\subsection{Good Green-spheres are nicely embedded}\label{sec:niceembd}

If $(X,\dist)$ is a smooth Riemannian manifold, we observed in \autoref{rm:noncrit} that the good Green-spheres $\mathbb{S}_r(p)$ are regular level sets of smooth functions. Hence, they are smoothly embedded submanifolds, bounding the respective Green-balls $\mathbb{B}_r(p)$. Recall, however, that there exist topological embeddings $\iota :S^2\to \mathbb{R}^3$ such that $\iota(S^2)$ separates $\R^3$ into two components, whose closures are not manifolds with boundary. A well-known example is Alexander's horned sphere. It is crucial that for (generic) good Green-spheres this kind of pathologies cannot occur.
\medskip

The set of good radii in \autoref{prop:Greensphere} has a positive measure.\\ 
\textbf{Assumption:} From now on, we will assume that a good Green-sphere is always a slice corresponding to a good value which is an accumulation point of good values. Almost every good radius obtained in \autoref{prop:Greensphere} satisfies such restriction, by Lebesgue's density theorem. 

\begin{definition}\label{def:kcoco}
Let $(X,\dist)$ be a metric space. A subset $C\subseteq X$ is said to be \emph{locally $1$-coconnected} (abbreviated to $1$-LCC) if every neighbourhood $U\subseteq X$ of an arbitrary point $x\in X$ contains another neighbourhood $V\subseteq X$ such that all continuous maps $\partial I^{2}\to V\setminus C$ extend to maps $I^{2}\to U\setminus C$, where $I:=[0,1]$.
\end{definition}


Exploiting the approximation from both sides (i.e., from the interior and from the exterior of the respective Green-ball) with locally uniformly contractible good slices, we can prove:

\begin{proposition}\label{prop:tame}
Any good Green-sphere $\mathbb{S}_r(p)$ is $1$-LCC. 
\end{proposition}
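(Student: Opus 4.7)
The strategy is to approximate $\mathbb{S}_r(p)$ by nearby good Green-spheres $\mathbb{S}_{r'}(p)$ with $r'$ accumulating at $r$ from both sides (which exist by the standing assumption on good radii), and to combine the locally uniform contractibility of these approximating spheres (\autoref{prop:loccont3d}~(i)) with the $\eta$-splitting behavior of $b_p$ near $\mathbb{S}_r(p)$ (\autoref{prop:Greensphere}~(iv)) to fill loops in the complement of $\mathbb{S}_r(p)$.

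Fix $x\in\mathbb{S}_r(p)$ and an open neighborhood $U$ of $x$. Choose $V:=B_\sigma(x)$ with $\sigma$ so small that $B_{\overline C^{2}\sigma}(x)\subset U$, where $\overline C$ is the local contractibility constant furnished by \autoref{prop:loccont3d}. Let $\gamma:\partial I^{2}\to V\setminus\mathbb{S}_r(p)$ be any continuous map. Since $b_p$ is continuous and $\gamma$ avoids $\{b_p=r\}$, after splitting into components we may reduce to the case $\gamma\subset V\cap\{b_p<r\}$; the other side is symmetric. Set $\beta:=\max_\gamma b_p<r$ and, using that $r$ is an accumulation point of good radii, pick a good radius $r'\in(\beta,r)$, so that $\gamma\subset\mathbb{B}_{r'}(p)\cap V$.

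The first main step is to homotope $\gamma$ onto the good Green-sphere $\mathbb{S}_{r'}(p)$, staying inside $U\cap\{b_p<r\}$. By \autoref{prop:Greensphere}~(iv) the function $b_p$ is an $\eta$-splitting on small balls around every point of $\mathbb{S}_r(p)$, and (as in the slicing machinery of \autoref{thm:slicingBPS}) on such balls the space is quantitatively close to a product $(-\sigma,\sigma)\times N$ with $b_p$ playing the role of the first coordinate. Using this almost-product structure one builds a continuous ``vertical'' deformation
\begin{equation*}
\Phi:[0,1]\times(V\cap\mathbb{B}_r(p))\to B_{\overline C\sigma}(x)
\end{equation*}
with $\Phi(0,\cdot)=\mathrm{Id}$, with $s\mapsto b_p\circ\Phi(s,y)$ monotone non-increasing and taking the value $r'$ at $s=1$, and with image contained in $\{b_p<r\}$. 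In particular $\Phi([0,1]\times\gamma)\subset U\setminus\mathbb{S}_r(p)$, and $\tilde\gamma:=\Phi(1,\gamma)\subset\mathbb{S}_{r'}(p)\cap B_{\overline C\sigma}(x)$. The locally uniform contractibility of the good Green-sphere $\mathbb{S}_{r'}(p)$ (\autoref{prop:loccont3d}~(i)), applied to the ball $B_{\overline C\sigma}(x)\cap \mathbb{S}_{r'}(p)$, gives a disk $D\subset \mathbb{S}_{r'}(p)\cap B_{\overline C^{2}\sigma}(x)\subset U\setminus\mathbb{S}_r(p)$ bounded by $\tilde\gamma$. Concatenating $D$ with the cylinder $\Phi([0,1]\times\partial I^{2})$ yields the required extension of $\gamma$ to a disk in $U\setminus\mathbb{S}_r(p)$, establishing the $1$-LCC condition.

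The main obstacle is the rigorous construction of the vertical deformation $\Phi$ in the non-smooth $\RCD$ setting. The $\eta$-splitting property only delivers $W^{1,2}$-type closeness of $b_p$ to a true splitting function, so promoting this to a genuine continuous, monotone homotopy along the $b_p$-direction requires either a regularization procedure combined with the local almost-product charts produced by the slicing theorem, or a selection-type argument that exploits the density of good radii accumulating at $r$. It is precisely the standing assumption that $r$ be an accumulation point of good values that makes this work: one needs a dense supply of nearby good slices $\mathbb{S}_{r'}(p)$ onto which to push $\gamma$ and to which to apply the local contractibility of \autoref{prop:loccont3d}~(i). A parallel argument with $r'\in(r,\min_\gamma b_p)$ handles the case $\gamma\subset V\cap\{b_p>r\}$.
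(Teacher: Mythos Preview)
Your strategy matches the paper's one-line sketch (``Exploiting the approximation from both sides \ldots\ with locally uniformly contractible good slices''): you have correctly identified the two ingredients---the standing assumption that good radii accumulate at $r$ from both sides, and \autoref{prop:loccont3d}(i)---and put them together in the intended way.

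One minor slip: having chosen $r'\in(\beta,r)$ with $\beta=\max_\gamma b_p$, the loop lies in $\{b_p<r'\}$, so to land on $\mathbb{S}_{r'}(p)=\{b_p=r'\}$ the deformation must \emph{increase} $b_p$, not decrease it; your monotonicity goes the wrong way. On the difficulty you flag: you are right that the $\eta$-splitting estimate from \autoref{prop:Greensphere}(iv) does not by itself yield a continuous monotone homotopy in the $\RCD$ setting, and the survey supplies no further detail either, deferring to \cite{BruePigatiSemola}. Bear in mind that at this stage of the argument the local contractibility of $X$ (\autoref{prop:loccontr}) is \emph{not} yet available, so any Petersen-type construction must rely only on the uniform local contractibility of the \emph{slices}. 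One way to sidestep the global deformation $\Phi$ is to first fill $\gamma$ by a singular disk $D\subset B_{C\sigma}(x)$ using the already-established local simple connectedness (\autoref{prop:1conn}), and then cap off the pieces of $D$ lying in $\{b_p>r'\}$ by small disks on $\mathbb{S}_{r'}(p)$ via \autoref{prop:loccont3d}(i); this localizes the problem and uses only tools available at this point.
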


R.-H. Bing proved in \cite{Bing 1ULC} that $1$-LCC subsets of a $3$-manifold which are homeomorphic to closed surfaces are tamely embedded.

\begin{corollary}\label{cor:manifoldwithboundary}
If for a good Green-ball it holds $\overline{\mathbb{B}}_r(p)\subset X\setminus\mathcal{S}_{\rm{top}}$, then $\overline{\mathbb{B}}_r(p)$ is a $3$-manifold with boundary, with boundary $\mathbb{S}_r(p)$.
\end{corollary}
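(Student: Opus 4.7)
The plan is to work inside the open topological $3$-manifold $U := X \setminus \mathcal{S}_{\mathrm{top}}$, which contains $\overline{\mathbb{B}}_r(p)$ by hypothesis. Every point $q \in \mathbb{B}_r(p) = \{b_p < r\}$ is automatically a manifold interior point of $\overline{\mathbb{B}}_r(p)$: it has a Euclidean $\R^3$-chart inside $U$, and $\mathbb{B}_r(p)$ is open, so the intersection of the chart with $\mathbb{B}_r(p)$ is an open neighborhood in $\overline{\mathbb{B}}_r(p)$ that is still Euclidean. Hence the only work is to produce half-space charts at boundary points $q \in \mathbb{S}_r(p)$.

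For this, the plan is to combine \autoref{prop:tame} with Bing's tame embedding theorem cited above. In the setting of \autoref{thm:manrecRCD}, all cross-sections of blow-ups are homeomorphic to $S^2$, so by \autoref{prop:loccont3d}(ii) the good Green-sphere $\mathbb{S}_r(p)$ is itself homeomorphic to $S^2$, and by \autoref{prop:tame} it is $1$-LCC in the topological $3$-manifold $U$. Bing's theorem then furnishes a bicollar, i.e.\ a topological embedding $h : \mathbb{S}_r(p) \times (-1,1) \to U$ with $h(x,0) = x$ whose image is an open neighborhood of $\mathbb{S}_r(p)$ in $U$.

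The remaining point is to identify which half-slab of the bicollar lies inside $\mathbb{B}_r(p)$; I expect this to be the main subtlety, since Bing's bicollar is purely topological and does not a priori respect the level-set structure of the Green-distance $b_p$. The way around it is to exploit the standing assumption of this subsection that $r$ is an accumulation point of good values. Choose sequences of good radii $r_k^- \nearrow r$ and $r_k^+ \searrow r$; by \autoref{prop:Greensphere}(ii) the corresponding good Green-spheres $\mathbb{S}_{r_k^\pm}(p)$ are closed embedded surfaces contained in $\{b_p < r\}$ and in $\{b_p > r\}$ respectively, and they converge in the Hausdorff sense to $\mathbb{S}_r(p)$. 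For $k$ large they are contained in the bicollar image and, being connected and disjoint from $h(\mathbb{S}_r(p)\times\{0\}) = \mathbb{S}_r(p)$, each of them sits entirely in one of the two open slabs $h(\mathbb{S}_r(p)\times(-1,0))$ or $h(\mathbb{S}_r(p)\times(0,1))$. After relabeling, one slab is contained in $\mathbb{B}_r(p)$ and the other in $\{b_p > r\}$, so each $q\in \mathbb{S}_r(p)$ inherits a neighborhood in $\overline{\mathbb{B}}_r(p)$ homeomorphic to an open half-space of $\R^3$. Combined with the interior charts from the first paragraph, this shows $\overline{\mathbb{B}}_r(p)$ is a (compact) topological $3$-manifold with boundary $\mathbb{S}_r(p)$.
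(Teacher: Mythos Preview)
Your argument is correct and follows the paper's approach: the survey simply cites Bing's tame-embedding theorem for $1$-LCC closed surfaces immediately after \autoref{prop:tame} and states the corollary without further detail, so your identification of which bicollar slab lies in $\mathbb{B}_r(p)$ via approximating good radii is a legitimate elaboration of a point the paper leaves implicit (and which is consistent with the two-sided approximation already invoked in the proof of \autoref{prop:tame}). Two minor remarks: the eventual containment of $\mathbb{S}_{r_k^\pm}(p)$ in the bicollar follows from continuity of $b_p$ and compactness (any cluster point lies in $\{b_p=r\}=\mathbb{S}_r(p)$), not from \autoref{prop:Greensphere}(ii); and for Bing's theorem you only need that $\mathbb{S}_r(p)$ is a closed surface (\autoref{prop:Greensphere}(iii)), not specifically $S^2$.
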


We can readily use \autoref{cor:manifoldwithboundary} to prove the implication from manifold regularity to the blow-ups being homeomorphic to $\R^3$ of \autoref{thm:manrecRCD}. Indeed, if $(X,\dist)$ is a topological $3$-manifold we can apply \autoref{cor:manifoldwithboundary} to any good Green-ball $\overline{\mathbb{B}}_r(p)$. If $r$ is sufficiently small, then $\mathbb{S}_r(p)$ is homeomorphic to the cross-section of any blow-up at $p$ by \autoref{prop:loccont3d} (ii). Since $\mathbb{S}_r(p)$ also bounds a $3$-manifold, $\mathbb{S}_r(p)\approx S^2$.   

\medskip

\begin{corollary}\label{cor:goodGman}
Any good Green-ball $\overline{\mathbb{B}}_r(p)\subset X\setminus\mathcal{S}_{\rm{top}}$ is homeomorphic to the closed Euclidean ball in $\R^3$.
\end{corollary}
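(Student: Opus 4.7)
The plan is to reduce to classical three-manifold topology. By \autoref{cor:manifoldwithboundary}, under the hypothesis $\overline{\mathbb{B}}_r(p)\subset X\setminus\mathcal{S}_{\rm{top}}$ the space $\overline{\mathbb{B}}_r(p)$ is already a compact topological $3$-manifold with boundary $\mathbb{S}_r(p)$. To recognize it as a closed Euclidean $3$-ball, I would verify that (i) the interior $\mathbb{B}_r(p)$ is simply connected, and (ii) the boundary $\mathbb{S}_r(p)$ is homeomorphic to $S^2$, and then invoke Perelman's resolution of the Poincaré conjecture together with Brown's topological Schoenflies theorem.

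First I would establish the two hypotheses. Item (i) follows from \autoref{prop:1conn} combined with the inclusions $B_{(1-\eta)r}(p)\subset\mathbb{B}_r(p)\subset B_{(1+\eta)r}(p)$ from \autoref{prop:Greensphere}: up to choosing the good radius small enough, $\mathbb{B}_r(p)$ is $1$-connected inside a larger Green-ball contained in a metric ball to which \autoref{prop:1conn} applies; this is the content of \autoref{rm:goodGBsimcon}. For (ii), \autoref{prop:loccont3d}(ii) identifies $\mathbb{S}_r(p)$ with the cross-section $Z$ of a nearby conical approximation, which is either $S^2$ or $\mathbb{RP}^2$. Since $\mathbb{S}_r(p)$ is the boundary of the compact $3$-manifold $\overline{\mathbb{B}}_r(p)$, its Euler characteristic is even; this rules out $\mathbb{RP}^2$ and forces $\mathbb{S}_r(p)\approx S^2$.

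Given (i) and (ii), I would finish by a doubling argument. Glue two copies of $\overline{\mathbb{B}}_r(p)$ along their $S^2$ boundary to obtain a closed topological $3$-manifold $D$. Since each copy is simply connected and the gluing locus $S^2$ is simply connected, van Kampen gives $\pi_1(D)=1$, and Perelman's theorem yields $D\approx S^3$. The Green-sphere sits in $D$ as a bicollared embedded $2$-sphere, the bicollar being automatic from the manifold-with-boundary structure on each half. Brown's topological Schoenflies theorem then shows that each complementary region of $\mathbb{S}_r(p)$ in $S^3$ is homeomorphic to the closed Euclidean $3$-ball; one of them is $\overline{\mathbb{B}}_r(p)$, as desired.

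The main obstacle is not the endgame but the upstream step of knowing that $\overline{\mathbb{B}}_r(p)$ is genuinely a topological manifold with boundary, rather than some compact set with an abstractly embedded $S^2$ on its frontier; pathologies of Alexander-horned-sphere type have to be excluded. This is exactly what \autoref{cor:manifoldwithboundary} delivers, resting in turn on the $1$-LCC tameness of \autoref{prop:tame} and on Bing's theorem that $1$-LCC closed surfaces in a $3$-manifold are tamely embedded. Once the manifold-with-boundary structure, simple-connectedness, and the two-sphere boundary are in place, the Poincaré--Schoenflies package closes out the argument.
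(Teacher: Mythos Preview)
Your proposal is correct and follows essentially the same approach as the paper: establish via \autoref{cor:manifoldwithboundary} and \autoref{rm:goodGBsimcon} that $\overline{\mathbb{B}}_r(p)$ is a simply connected compact $3$-manifold with boundary $S^2$, then invoke the Poincar\'e conjecture. The paper states the endgame more tersely (``the conclusion follows from the resolution of the Poincar\'e conjecture''), whereas you spell out a doubling-plus-Schoenflies route and an Euler-characteristic argument to rule out $\mathbb{RP}^2$; both elaborations are standard and equivalent to what the paper has in mind.
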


\begin{proof}
Any such good Green-ball $\overline{\mathbb{B}}_r(p)$ is a simply connected $3$-manifold with boundary homeomorphic to $S^2$. The conclusion follows from the resolution of the Poincar\'e conjecture. 
\end{proof}

\begin{remark}
If one is only interested in the conclusion that $\overline{\mathbb{B}}_r(p)$ is contractible, then by Whitehead's theorem it is sufficient to argue that it has the homotopy type of a point. This can be verified with a standard argument based on Lefschetz duality; we address the reader to the proof of \cite[Proposition 9.23]{BruePigatiSemola} for the details. 
\end{remark}

\subsection{Local linear contractibility}\label{sec:loccont}

The proof of the local linear contractibility of $(X,\dist)$ is one of the most delicate steps of the argument. We begin with a precise statement:

\begin{proposition}\label{prop:loccontr}
Let $v>0$. There exist $C=C(v)>0$ and $\rho=\rho(v)>0$ such that if $(X,\dist,\haus^3)$ is an $\RCD(-2,3)$ space with $\haus^3(B_1(p))\ge v$ for any $p\in X$ and such that all blow-ups are homeomorphic to $\R^3$, then $B_r(p)$ is contractible inside $B_{Cr}(p)$ for every $r\le\rho$ and every $p\in X$.
\end{proposition}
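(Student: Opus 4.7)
The plan is to reduce the claim to the contractibility of a suitable good Green-ball sandwiched between $B_r(p)$ and $B_{Cr}(p)$, and then to contract that Green-ball explicitly using the Green-distance as a radial parameter.

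First I would apply \autoref{lemma:conical} at the base-point $p$ to produce, for every $p$ with $\haus^3(B_1(p)) \ge v$ and every sufficiently small $r$ (with smallness depending only on $v$), a radius $r' \in (r, C(v)r)$ such that $B_{20r'}(p)$ is Gromov--Hausdorff close, after rescaling by $r'$, to the ball of radius $20$ in a cone $C(Y)$ over an $\RCD(1,2)$ space $(Y, \dist_Y, \haus^2)$. By the no-boundary stability of \cite{BrueNaberSemolabdry}, $Y$ is closed; combining the hypothesis that all blow-ups of $X$ are homeomorphic to $\R^3$ with \autoref{thm:noRP2BPS} applied diagonally to a suitable Gromov--Hausdorff converging sequence (in the spirit of the argument sketched for \autoref{prop:1conn}) would rule out $Y \approx \mathbb{RP}^2$, forcing $Y \approx S^2$. \autoref{prop:conS21c} then furnishes a good Green-ball $\mathbb{B}_{r'}(p)$ with $B_r(p) \subset \mathbb{B}_{r'}(p) \subset B_{Cr}(p)$, which is simply connected by \autoref{rm:goodGBsimcon}.

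To contract $\mathbb{B}_{r'}(p)$ within itself I would use the one-parameter family of sublevel sets of the Green-distance $b_p$ as a substitute for a radial contraction. By \autoref{prop:Greensphere}, almost every level $\mathbb{S}_s(p)$ for $s \in (0, r')$ is a tamely embedded $S^2$ thanks to \autoref{prop:tame}. The dimension bound $\dim_\haus \mathcal{S}_{\rm top} \le 1$, which follows from the $\eps$-regularity theorem of Kapovitch--Mondino \cite{KapovitchMondino} combined with the metric Reifenberg theorem \cite{CheegerColding97I}, ensures that for a dense collection of pairs $s_1 < s_2$ the annular region $\{s_1 \le b_p \le s_2\}$ lies entirely in $X \setminus \mathcal{S}_{\rm top}$, and is therefore by \autoref{cor:manifoldwithboundary} a compact simply connected $3$-manifold with boundary $\mathbb{S}_{s_1}(p) \sqcup \mathbb{S}_{s_2}(p)$, hence homeomorphic to $S^2 \times [0,1]$ by the generalized Schoenflies theorem. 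Concatenating countably many such annular product structures, and handling the innermost scales via the blow-up assumption at $p$ together with the quantitative Gromov--Hausdorff approximation from step one, produces a deformation retraction of $\mathbb{B}_{r'}(p)$ onto $\{p\}$ realized inside $B_{Cr}(p)$.

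The main obstacle is to ensure that this countable concatenation of annular retractions is continuous across the (at most countably many) exceptional levels of $b_p$ intersecting the singular set $\mathcal{S}_{\rm top}$, and to do so with uniformly controlled modulus so as to yield a genuine contraction rather than a mere statement about homotopy groups. I would address this by exploiting the $1$-LCC property of \autoref{prop:tame} to push partial contractions off the $1$-dimensional set $\mathcal{S}_{\rm top}$ in a controlled way, coupled with a localised application of \autoref{prop:1conn} in small neighbourhoods of singular strata to patch the annular pieces together. This delicate handling of how higher-dimensional spherical cycles interact with the low-dimensional non-manifold set is the technical heart of the argument.
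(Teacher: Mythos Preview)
Your reduction to contracting a good Green-ball sandwiched between $B_r(p)$ and $B_{Cr}(p)$, via \autoref{lemma:conical} and \autoref{prop:conS21c}, matches the paper. However, your strategy for contracting the Green-ball itself contains a genuine gap.

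The central claim that ``for a dense collection of pairs $s_1 < s_2$ the annular region $\{s_1 \le b_p \le s_2\}$ lies entirely in $X \setminus \mathcal{S}_{\rm top}$'' is false in general. The bound $\dim_{\haus}\mathcal{S}_{\rm top}\le 1$ does not prevent $\mathcal{S}_{\rm top}$ from intersecting \emph{every} Green-annulus centred at $p$: think of a radial arc through $p$ lying entirely in $\mathcal{S}_{\rm top}$. Nothing in the hypotheses rules this out. Consequently \autoref{cor:manifoldwithboundary} is unavailable on these annuli, and you cannot conclude that they are manifolds with boundary, let alone homeomorphic to $S^2\times[0,1]$. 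Your argument is essentially the isolated-singularity toy model (\autoref{prop:toymodelisol}); it breaks precisely because at this stage of the roadmap $\mathcal{S}_{\rm top}$ is not known to be discrete. The patching you propose in the last paragraph, via the $1$-LCC property and \autoref{prop:1conn}, controls loops and embedded surfaces but does not produce the product structure on annuli that your concatenation scheme requires.

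The paper takes a different route that does not attempt to build an explicit radial contraction. It proves that every good Green-ball is $k$-connected for each $k\le 3$ by induction on $k$, the base case being \autoref{prop:conS21c}. For the inductive step one reduces via Hurewicz to killing a $(k+1)$-cycle in homology: the Reifenberg structure is used to deform the cycle into a thin tubular neighbourhood of the effective singular set, and a covering argument together with Mayer--Vietoris breaks it into a sum of cycles each supported in a good Green-ball of much smaller diameter. Iterating, and passing to the limit via a technical lemma (with a Baire-category argument to secure uniformity), annihilates the cycle. The contractibility of Green-balls contained in $X\setminus\mathcal{S}_{\rm top}$ (\autoref{cor:goodGman}) is invoked only for those small balls in the iteration that happen to avoid $\mathcal{S}_{\rm top}$, never for a whole annulus. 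This is what allows the argument to tolerate a possibly $1$-dimensional non-manifold set.
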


As for local linear simple connectedness before, \autoref{prop:loccontr} follows from \autoref{lemma:conical} and the more precise statement that all good Green-balls are contractible, corresponding to \cite[Proposition 10.2]{BruePigatiSemola}. 
\medskip

The contractibility of the good Green-balls can be reduced to showing that they are $k$-connected for each $k\le 3$, i.e., they have trivial $k$-homotopy groups. See \cite[Proposition 10.7]{BruePigatiSemola} for the details about this reduction.
We prove this claim by arguing by induction over $k$. The base step of the induction, corresponding to $k=1$, follows from \autoref{prop:conS21c} (see also \autoref{rm:goodGBsimcon}).

\begin{proof}[Idea of the proof of the inductive step]
By Hurewicz, it is sufficient to argue that $H_{k+1}(\mathbb{B}_r(p);\mathbb{Z})$ is trivial for each good Green-ball $\mathbb{B}_r(p)$. We assume that $H_{k}(\mathbb{B}_s(q);\mathbb{Z})$ is trivial for each good Green-ball $\mathbb{B}_s(q)$. 

We consider a $(k+1)$-cycle $[\sigma]$ supported in $\mathbb{B}_r(p)$. By construction $\mathbb{B}_r(p)$ looks (scale invariantly) very close in the $\mathrm{GH}$ sense to $B_r(o)\subset C(Y)$, where $(Y,\dist_Y)$ is a topological sphere with $\mathrm{Sect}\ge 1$ in the Alexandrov sense. By the Cheeger-Colding Reifenberg theorem, we understand the topology of $\mathbb{B}_r(p)$ completely away from a (scale invariantly) small tubular neighbourhood $U\subset \mathbb{B}_r(p)$ of $C(\{p_1,\dots,p_{\ell}\})$, where $p_1,\dots,p_{\ell}\in Y$ are the points whose blow-up is not $\eps$-close to $\R^2$. In particular, we can deformation retract $\mathbb{B}_r(p)$ onto $U$. Hence $[\sigma]$ is homologous to a $(k+1)$-cycle $[\sigma']$ supported in $U$. 

With a careful covering argument and Mayer-Vietoris, we can break up $[\sigma']$ into a homologous sum $\sum[\sigma'_j]$ of $(k+1)$-cycles each supported in a good Green-ball $\mathbb{B}_s(q)$ with a diameter much smaller than the original $\mathbb{B}_r(p)$. Here the inductive hypothesis plays a key role. 

Morally, one would like to iterate this procedure with each of the cycles $[\sigma'_j]$ and pass to the limit until $[\sigma]$ vanishes in homology. See \cite[Lemma 10.9]{BruePigatiSemola} for the key technical lemma that justifies this limiting procedure.
Making this moral work requires quite some care. In particular, it is crucial to know that good Green-balls supported away from $\mathcal{S}_{\rm{top}}$ (which might be nonempty at this stage of the proof) are contractible, see \autoref{cor:goodGman}.  
\end{proof}


\begin{remark}
Inductive and iterative arguments with a similar flavour have appeared before in the study of the topology of spaces with lower Ricci bounds. For instance, in \cite{Perelmanmaximal}, or more recently, in \cite{PanWei,WangRCD}. With respect to these references, a key difference is the need to argue by contradiction with the help of Baire's category theorem, to gain some uniformity which might not be guaranteed a priori. See the discussion at the beginning of \cite[Section 10]{BruePigatiSemola} for the details of this reduction.
\end{remark}

\subsection{Generalized manifold regularity}\label{sec:genman}

To complete the proof that $(X,\dist)$ is a generalized $3$-manifold we are left with the verification of the isomorphism $H_{*}(X,X\setminus\{x\};\Z)\cong H_{*}(\R^3,\R^3\setminus\{0\};\Z)$ for all $x\in X$. 

For a $3$-manifold $M$, the sought isomorphism is easily established by considering a closed neighbourhood $\overline{U}_x\ni x$ homeomorphic to the closed ball $\overline{D}$ centred at the origin of $\R^3$. Indeed
\begin{equation}
H_{*}(M,M\setminus\{x\};\Z)\cong H_{*}(\overline{U}_x,\overline{U}_x\setminus\{x\};\Z)\, ,
\end{equation}
by excision, and clearly
\begin{equation}
H_{*}(\overline{U}_x,\overline{U}_x\setminus\{x\};\Z)\cong H_{*}(\R^3,\R^3\setminus\{0\};\Z)\, .
\end{equation}
We replace the neighbourhood homeomorphic to $\overline{D}$, whose existence is not clear at this stage, with a sufficiently small good (closed) Green-ball $\overline{\mathbb{B}}_r(x)$. The local uniform contractibility of $X$, i.e., \autoref{prop:loccontr}, together with the local uniform contractibility of the boundary Green-sphere $\overline{\mathbb{S}}_r(x)$, i.e., \autoref{prop:loccont3d} (i), yield:

\begin{lemma}{\cite[Lemma 10.12]{BruePigatiSemola}}\label{lemma:retract}
Any punctured good Green-ball $\overline{\mathbb{B}}_r(x)\setminus\{x\}$ deformation retracts onto the Green-sphere $\mathbb{S}_r(x)$, if $r$ is sufficiently small. 
\end{lemma}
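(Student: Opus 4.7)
The strategy is to decompose the punctured Green-ball into a countable union of nested annular shells, deformation-retract each shell onto its outer Green-sphere, and concatenate these via a telescope. Using the standing Assumption that $r$ is an accumulation point of good values, and iteratively applying \autoref{prop:Greensphere} at every scale near $x$, I fix a strictly decreasing sequence of good radii $r=r_0>r_1>r_2>\cdots\downarrow 0$ with bounded ratios $r_{k+1}/r_k\in[1/4,1/2]$. Each $\mathbb{S}_{r_k}(x)$ is a closed surface homeomorphic to $S^2$ by \autoref{prop:loccont3d}(ii) combined with the blow-up hypothesis of \autoref{thm:manrecRCD} at $x$; it is $1$-LCC by \autoref{prop:tame}; and it bounds a good Green-ball $\overline{\mathbb{B}}_{r_k}(x)$ which is contractible by \autoref{prop:loccontr}.

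The main step is to construct, for each $k$, a deformation retraction $\eta_k:A_k\times[0,1]\to A_k$ of the annular shell $A_k:=\overline{\mathbb{B}}_{r_k}(x)\setminus\mathbb{B}_{r_{k+1}}(x)$ onto its outer boundary $\mathbb{S}_{r_k}(x)$, fixing $\mathbb{S}_{r_k}(x)$ pointwise throughout. For the homotopy type, Mayer--Vietoris applied to $\overline{\mathbb{B}}_{r_k}(x)=A_k\cup\overline{\mathbb{B}}_{r_{k+1}}(x)$ with intersection $\mathbb{S}_{r_{k+1}}(x)\simeq S^2$, combined with contractibility of both Green-balls, forces $A_k$ to have the singular homology of $S^2$; local uniform simple connectedness (\autoref{prop:1conn}) and Seifert--van Kampen yield $\pi_1(A_k)=0$; hence by Whitehead the inclusion $\mathbb{S}_{r_k}(x)\hookrightarrow A_k$ is a weak equivalence. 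Promoting this to a deformation retract requires the pair $(A_k,\mathbb{S}_{r_k}(x))$ to have the homotopy extension property. This is exactly where the $1$-LCC property of \autoref{prop:tame} is essential: together with the local uniform contractibility of $A_k$ (\autoref{prop:loccontr}) and of $\mathbb{S}_{r_k}(x)$ itself (\autoref{prop:loccont3d}(i)), it provides enough neighborhood retraction data to exhibit $\mathbb{S}_{r_k}(x)\hookrightarrow A_k$ as a closed cofibration. A closed cofibration which is a homotopy equivalence is automatically a deformation retract, so $\eta_k$ exists.

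Finally, I would concatenate the $\eta_k$ by a standard telescope. A point $y\in A_k$ first slides through $A_k$ under $\eta_k$ during a time interval of length $2^{-(k+1)}$, then through $A_{k-1}$ under a reparametrization of $\eta_{k-1}$ during an interval of length $2^{-k}$, and so on, reaching a point of $\mathbb{S}_r(x)$ at $t=1$. The fact that each $\eta_j$ fixes its outer boundary pointwise forces the definitions on adjacent shells to agree along each interface sphere $\mathbb{S}_{r_k}(x)$, yielding joint continuity away from $x$; continuity at $t=1$ is immediate since each trajectory terminates on $\mathbb{S}_r(x)$, which is fixed throughout by construction, and no trajectory accumulates at the deleted point. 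The hardest part of the argument is the homotopy-theoretic Step 2: verifying the homotopy extension property purely from metric information (uniform contractibility plus $1$-LCC), without any appeal to a manifold structure on $X$ -- which is, after all, precisely what we are trying to establish.
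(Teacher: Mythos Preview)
Your telescope has a gluing bug. Take $y_0\in\mathbb{S}_{r_k}(x)=A_{k-1}\cap A_k$. With your schedule, viewed as a point of $A_k$ the trajectory of $y_0$ sits still on $[0,2^{-(k+1)}]$ (because $\eta_k$ fixes its outer boundary $\mathbb{S}_{r_k}$) and only then begins the $\eta_{k-1}$ flow; viewed as a point of $A_{k-1}$ it enters the $\eta_{k-1}$ flow immediately at $t=0$. These two descriptions disagree at every $t\in(0,2^{-(k+1)}]$, so $H$ is not continuous across the interface. The sentence ``each $\eta_j$ fixes its outer boundary pointwise forces the definitions on adjacent shells to agree'' is therefore false as written. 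The fix is to synchronise by absolute time rather than by shell-relative time: let every point stay put on $[0,2^{-(k+1)}]$ and run $\eta_k$ on $[2^{-(k+1)},2^{-k}]$, so that the $\eta_{k-1}$ flow through $A_{k-1}$ begins at the same moment $2^{-k}$ regardless of which side of $\mathbb{S}_{r_k}$ one approaches from. With that reparametrisation the pasting works, and continuity on the punctured ball follows from the local finiteness of the closed cover $\{A_k\}$.

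A second point: you invoke \autoref{prop:loccontr} for ``local uniform contractibility of $A_k$'', but that proposition gives contractibility of metric balls in $X$, not of one-sided half-balls $B_s(q)\cap\{b_x\le r_k\}$ at boundary points $q\in\mathbb{S}_{r_k}$. Promoting it to the closed annulus is precisely where the local uniform contractibility of the sphere (\autoref{prop:loccont3d}(i)) and the $1$-LCC property must be combined with \autoref{prop:loccontr}; this deserves an explicit argument rather than a citation. The paper's hint singles out exactly these two inputs and suggests working directly with the single pair $(\overline{\mathbb{B}}_r(x)\setminus\{x\},\mathbb{S}_r(x))$: both are then ANRs, the closed inclusion is a cofibration, and one verifies it is a homotopy equivalence. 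Your annular decomposition is a legitimate route to that last verification, but it does not sidestep the one-sided contractibility issue --- it multiplies it, since each $A_k$ now has two boundary spheres to handle.
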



Exploiting \autoref{lemma:retract}, the contractibility of $\overline{\mathbb{B}}_r(x)$, and the homeomorphism between $\mathbb{S}_r(x)$ and $S^2$, proving that 
\begin{equation}
H_{*}(\overline{\mathbb{B}}_r(x),\overline{\mathbb{B}}_r(x)\setminus\{x\};\Z)\cong H_{*}(\R^3,\R^3\setminus\{0\};\Z)\, 
\end{equation}
is an easy exercise in algebraic topology. 

\subsection{Manifold regularity}
The last step of our roadmap corresponds to upgrading the information that $(X,\dist)$ is a generalized $3$-manifold to the sought statement that it is a topological $3$-manifold.

\begin{remark}
In \cite[Section 4]{Semmes96}, S. Semmes constructed some Ahlfors $3$-regular generalized $3$-manifolds $(X,\dist)$, which are topological manifolds away from a single point which is not a manifold point. 
\end{remark}

\begin{remark}
The spherical suspension over the Poincar\'e homology sphere (endowed with a round metric with constant curvature $\equiv  1$) is a well-known example of a $4$-dimensional Alexandrov space with curvature $\ge 1$, which is a generalised $4$-manifold but is not a topological $4$-manifold. It is not a noncollapsed limit of manifolds with sectional curvature uniformly bounded from below by Perelman's stability theorem \cite{Perelman99}. On the other hand it is a noncollapsed Ricci limit space.
\end{remark}

The two remarks above are meant to convince the reader that the last step in our roadmap requires some work. We start again from two toy models.

\begin{proposition}\label{prop:toymodelglobal}
Let $(M^3,g)$ have $\Ric\ge 0$ and Euclidean volume growth. Then $M\approx \R^3$.
\end{proposition}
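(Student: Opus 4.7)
The plan is to adapt the Green's function slicing argument from the proof of \autoref{prop:global1con} to produce a monotone exhaustion of $M$ by open sets homeomorphic to $\R^3$, and then to invoke Morton Brown's theorem on monotone unions of cells.

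First I would fix $p\in M$, let $G:M\setminus\{p\}\to(0,\infty)$ be the minimal positive Green's function with pole at $p$, and set $b_p:=G^{-1}$ suitably normalized so that it is proper and blows up to $\infty$ at infinity. For any sequence of scales $R_j\to\infty$, the rescaled manifolds $(M,R_j^{-2}g,p)$ converge (up to a subsequence) in the pointed Gromov-Hausdorff sense to a blow-down $C(Y)$ with $Y\approx S^2$, by \autoref{thm:3d}. Applying the slicing \autoref{thm:slicingBPS} to the rescaled $b_p$ and arguing as in the proof of \autoref{prop:global1con}, I would extract good radii $r_j\to\infty$ such that each good Green-sphere $\mathbb{S}_{r_j}(p):=\{b_p=r_j\}$ consists only of noncritical values of $G$ (cf.\ \autoref{rm:noncrit}) and is homeomorphic to the cross-section of the corresponding blow-down, hence to $S^2$.

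Next, each closed Green-ball $\overline{\mathbb{B}}_{r_j}(p):=\{b_p\le r_j\}$ is then a compact smooth $3$-manifold with boundary $\mathbb{S}_{r_j}(p)\approx S^2$. By \autoref{prop:conS21c}, applied at the reference scale after rescaling (whose hypotheses are met thanks to the pGH convergence to $C(Y)$ with $Y\approx S^2$), the open Green-ball $\mathbb{B}_{r_j}(p)$ is simply connected. Thus $\overline{\mathbb{B}}_{r_j}(p)$ is a compact, simply connected $3$-manifold whose boundary is a topological $2$-sphere, and the resolution of the Poincar\'e conjecture gives $\overline{\mathbb{B}}_{r_j}(p)\approx \overline{D}^3$; in particular $\mathbb{B}_{r_j}(p)\approx\R^3$.

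Finally, since $b_p$ is proper, by passing to a subsequence I may assume $\overline{\mathbb{B}}_{r_j}(p)\subset \mathbb{B}_{r_{j+1}}(p)$ and $\bigcup_j \mathbb{B}_{r_j}(p)=M$. Morton Brown's theorem on monotone unions of open cells then yields $M\approx\R^3$. The main obstacle will be the diagonal selection of good radii at arbitrarily large scales: \autoref{thm:slicingBPS} provides good radii only at a fixed reference scale at a time, so one must verify that within each rescaled annular region the set of good radii has positive measure (with the error parameters going to zero along the sequence), enabling a nested and cofinal choice $r_j\to\infty$ compatible with the exhaustion of $M$ by simply connected Green-balls.
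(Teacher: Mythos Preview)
Your proposal is correct and follows essentially the same route as the paper's sketch: Green's function slicing at large scales produces good level sets homeomorphic to $S^2$, the corresponding Green-balls are simply connected (the paper leaves this implicit, while you spell out the appeal to \autoref{prop:conS21c} and \autoref{rm:goodGBsimcon}), Poincar\'e gives that each closed Green-ball is a $3$-ball, and Brown's theorem finishes. Your closing concern about diagonally selecting a nested cofinal sequence of good radii is a routine point---the slicing theorem yields a positive-measure set of good radii in each rescaled annulus, so such a selection is always possible---and the paper's sketch does not dwell on it either.
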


\begin{proof}[Sketch of the proof]
We fix $p\in M$ and consider the Green's function of the Laplacian $G_p:M^3\setminus\{p\}\to (0,\infty)$. There exists a sequence $t_i\to 0$ such that the super level sets $\{G_p\ge t_i\}$ are simply connected $3$-manifolds with boundary, with boundary homeomorphic to $S^2$. By the resolution of the Poincar\'e conjecture the $3$-manifold $\{G_p\ge t_i\}$ is homeomorphic (actually diffeomorphic) to the closed Euclidean $3$-ball. Hence $M^3$ admits an exhaustion into Euclidean $3$-balls. By \cite{Brown}, $M\approx \R^3$. 
\end{proof}

\begin{remark}
The statement of \autoref{prop:toymodelglobal} is certainly not original. Our goal above was to present a proof based on the methods developed in \cite{BruePigatiSemola}. There are (at least) two other morally independent proofs of \autoref{prop:toymodelglobal}:
\begin{itemize}
\item[i)] One can rely on the work of G. Liu \cite{Liu13} where complete (noncompact) $(M^3,g)$ with $\Ric\ge 0$ are classified up to diffeomorphism. The options which are not diffeomorphic to $\R^3$ are easily ruled out by the Euclidean volume growth condition.\\ 
The proof in \cite{Liu13} relies on the resolution of the Poincar\'e conjecture in the very last step to prove that $M^3$ is irreducible, i.e., any embedded $S^2$ bounds a standard $3$-ball. As pointed out to me by Chao Li, under the Euclidean volume growth condition one can check this in an alternative way by observing that $(M^3,g)$ admits an exhaustion into bounded strictly mean convex domains, see for instance \cite{FogagnoloMazzieri}. 
 Indeed, any strictly mean convex domain in a $3$-manifold with $\Ric\ge 0$ is diffeomorphic to a handlebody, as proved by N.-G. Anonov, Yu.-D. Burago, and V.-A. Zalgaller in \cite{AnonovBuragoZalgaller}, and independently by W.-H. Meeks III, L. Simon, and Yau in \cite{MeeksSimonYau} with a different method.
\item[ii)] Using the work of Simon and Topping \cite{SimonTopping22,SimonTopping22a} based on Ricci flow, it is possible to exhaust $M^3$ with open domains homeomorphic to the ball centred at the vertex of any blow-down cone of $(M^3,g)$. Such balls are homeomorphic to the Euclidean ball. The conclusion follows again from \cite{Brown}.
\end{itemize} 
Both approaches rely on the smoothness of $(M^3,g)$ very heavily. On the other hand, the proof of \autoref{prop:toymodelglobal} that we sketched above works for any $\RCD(0,3)$ manifold $(X,\dist,\haus^3)$ with Euclidean volume growth. 
\end{remark}

To prove that an $\RCD(-2,3)$ space $(X,\dist,\haus^3)$ with all blow-ups homeomorphic to $\R^3$ is a topological $3$-manifold, we need to contend with the presence of a possibly $1$-dimensional nonmanifold set $\mathcal{S}_{\rm top}$. It is instructive to discuss the case of isolated point singularities first.

\begin{proposition}\label{prop:toymodelisol}
Let $(X,\dist,\haus^3)$ be an $\RCD(-2,3)$ space such that $X\setminus\{x_1,\dots,x_k\}$ is a topological $3$-manifold, i.e., $\mathcal{S}_{\rm{top}}\subset \{x_1,\dots,x_k\}$, and every blow-up of $X$ at $x_i$ is homeomorphic to $\R^3$, for every $i=1,\dots, k$. Then $X$ is a topological $3$-manifold. 
\end{proposition}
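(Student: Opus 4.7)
The strategy is to show that each isolated singular point $x_i$ admits a neighborhood homeomorphic to $\R^3$; since $X\setminus\{x_1,\ldots,x_k\}$ is already a topological $3$-manifold by hypothesis, this suffices. Fix $i$, write $x:=x_i$, and work with small good Green-balls $\mathbb{B}_r(x)$ as in \autoref{prop:Greensphere}. Under the assumption that all blow-ups at $x$ are homeomorphic to $\R^3=C(S^2)$, a combination of \autoref{lemma:conical} with \autoref{prop:loccont3d}(ii) guarantees that, for every sufficiently small good radius, $\mathbb{S}_r(x)\approx S^2$; moreover each $\mathbb{S}_r(x)$ is $1$-LCC (\autoref{prop:tame}) and hence bicollared in the ambient $3$-manifold $X\setminus\{x\}$ by Bing's theorem, and, as observed after \autoref{prop:conS21c}, small good Green-balls $\mathbb{B}_r(x)$ are simply connected.

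\textbf{Step 1: shells are cylinders.} Choose $r_0>0$ small enough that $\overline{\mathbb{B}}_{r_0}(x)\cap\mathcal{S}_{\rm top}=\{x\}$ and the properties above hold, and pick a decreasing sequence $r_0>r_1>r_2>\cdots\to 0$ of good radii. The shell
\begin{equation*}
A_n:=\overline{\mathbb{B}}_{r_{n-1}}(x)\setminus\mathbb{B}_{r_n}(x)\,\subset\,X\setminus\{x\}
\end{equation*}
is a compact topological $3$-manifold with boundary $\mathbb{S}_{r_{n-1}}(x)\sqcup\mathbb{S}_{r_n}(x)\approx S^2\sqcup S^2$. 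Applying van Kampen to $\mathbb{B}_{r_{n-1}}(x)=A_n^{\circ}\cup\mathbb{B}_{r_n}(x)$, whose intersection is a bicollar of $\mathbb{S}_{r_n}(x)$ and therefore simply connected, together with the simple connectedness of both good Green-balls, forces $\pi_1(A_n)=1$. Capping each boundary component of $A_n$ with a $3$-ball (permissible by the bicollaring and the generalised Schoenflies theorem) produces a closed simply connected $3$-manifold, which by Perelman's resolution of the Poincar\'e conjecture is homeomorphic to $S^3$. Hence $A_n$ is the region between two disjoint bicollared $2$-spheres in $S^3$, and the classical $3$-dimensional annulus theorem yields $A_n\approx S^2\times[0,1]$.

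\textbf{Step 2: assembly.} Construct inductively matching homeomorphisms $\phi_n:A_n\to S^2\times[2^{-n},2^{-(n-1)}]$ that agree on the common interface sphere $\mathbb{S}_{r_{n-1}}(x)$; this is always possible because any self-homeomorphism of $S^2$ extends to a homeomorphism of $S^2\times[0,1]$ that is the identity on one end. The $\phi_n$ patch into a single homeomorphism $\Phi:\overline{\mathbb{B}}_{r_0}(x)\setminus\{x\}\to S^2\times(0,1]$. Since the shells $A_n$ are contained in ambient balls around $x$ whose radii tend to $0$, the diameter of $\Phi^{-1}(S^2\times[2^{-n},2^{-(n-1)}])$ tends to $0$, so $\Phi$ extends continuously by $x\mapsto$(cone point) to a homeomorphism
\begin{equation*}
\overline{\mathbb{B}}_{r_0}(x)\,\approx\,S^2\times[0,1]/(S^2\times\{0\})\,\approx\,\overline{D}^3.
\end{equation*}
Thus $x$ has a Euclidean neighborhood; performing this at each $i$ completes the proof.

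\textbf{Main obstacle.} I expect the most delicate point to lie in Step 2: the inductive gluings $\phi_n$ and their reparametrisations must be assembled so that the resulting $\Phi$ truly extends continuously across $\{x\}$, and some uniform control on the boundary identifications has to be imposed by hand. A cleaner conceptual alternative bypasses the explicit gluing: by Step 1 and \autoref{lemma:retract}, the unique end of $\overline{\mathbb{B}}_{r_0}(x)\setminus\{x\}$ at $x$ has $\pi_1^{\infty}=1$ and arbitrarily small neighborhoods of the homotopy type of $S^2$, hence it is tame in the sense of Siebenmann and admits a collar $S^2\times[0,\infty)$, whose one-point compactification at the $\infty$-end is $\overline{D}^3$.
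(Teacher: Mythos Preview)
Your proof is correct and follows essentially the same route as the paper's: localize at one isolated point, take a nested sequence of good Green-balls with Green-spheres $\approx S^2$, show each intervening annulus is $S^2\times[0,1]$, glue, and extend over $x$ by one-point compactification. The only cosmetic difference is in identifying the annuli: the paper invokes the retraction argument of \autoref{lemma:retract} to see directly that each annulus has the homotopy type of $S^2$ and then applies Poincar\'e, whereas you run van Kampen plus capping plus the annulus theorem in $S^3$; both are standard and either one works.
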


\begin{proof}[Sketch of the proof]
The argument is local. We assume that there is only one singularity, i.e., $k=1$, and set $x:=x_1$. The key idea is to ``reverse'' the moral that we exploited in the proof of \autoref{prop:toymodelglobal}. We consider a strictly decreasing sequence $t_i\to 0$ such that the Green-balls $\mathbb{B}_{t_i}(x)$ are all good. Thanks to the assumption on the blow-ups at $x$, we can assume that all the corresponding Green-spheres $\mathbb{S}_{t_i}(x)$ are homeomorphic to $S^2$. Since $X\setminus \{x\}$ is a topological $3$-manifold, all the Green-annuli $\overline{\mathbb{B}}_{t_i}(x)\setminus \mathbb{B}_{t_{i+1}}(x)$ are $3$-manifolds with boundary (cf. with \autoref{prop:tame}), with two boundary components homeomorphic to $S^2$. Arguing as in the proof of \autoref{lemma:retract}, we infer that all such annuli are homotopically equivalent to $S^2$. By the resolution of the Poincar\'e conjecture, they are homeomorphic to $S^2\times[0,1]$. We can ``paste together'' these homeomorphisms into a homeomorphism between $\overline{\mathbb{B}}_{t_0}(x)\setminus\{x\}$ and $\overline{B}_1(0)\setminus\{0\}\subset \R^3$. The conclusion follows by the uniqueness of the one-point compactification. 
\end{proof}

The general case is clearly more challenging with respect to \autoref{prop:toymodelisol}. Indeed, the nonmanifold set $\mathcal{S}_{\rm{top}}$ in principle might be $1$-dimensional. Hence, reconstructing the topology of $X$ starting from the topology of $X\setminus\mathcal{S}_{\rm{top}}$ is highly nontrivial.
What is particularly important for us is that, although removing $X\setminus\mathcal{R}_{\eps}$ from $X$ might mess up the local simple connectedness, we can control the way this occurs. To make this precise, following \cite{Thickstunb}, we introduce:

\begin{definition}\label{def:GPDO}
If $X$ is a generalized $3$-manifold and $A\subset X$ is a closed subset, we say that $A$ has \emph{general-position dimension one} in $X$ if any continuous map $f:\overline D\to X$ can be approximated arbitrarily well by maps $g:\overline D\to X$ such that $g(\overline D)\cap A$ is $0$-dimensional. Here $\overline D\subset \R^2$ denotes the closed $2$-ball.
\end{definition}

\begin{proposition}{\cite[Proposition 11.1]{BruePigatiSemola}}\label{prop:GPDOS}
Let $(X,\dist,\haus^3)$ be an $\RCD(-2,3)$ space such that all the blow-ups are homeomorphic to $\R^3$. Then $X\setminus\mathcal{R}_{\eps}$ has general-position dimension one. A fortiori, $\mathcal{S}_{\rm{top}}\subset X\setminus\mathcal{R}_{\eps}$ has general-position dimension one. 
\end{proposition}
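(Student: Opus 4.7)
The plan is to localize the statement to a single good Green-ball $\mathbb{B}_r(p)$, where the blow-up hypothesis together with the Cheeger--Colding metric Reifenberg theorem concentrates the complement of $\mathcal{R}_\eps$ near finitely many radial arcs, and then to push a generic $2$-disk off these arcs by a standard transversality argument inside a smooth chart. Given $f\colon\overline D\to X$, I would first use the local uniform contractibility (cf.\ \autoref{prop:loccontr}) to triangulate $\overline D$ so finely that each $2$-simplex has image contained in a good Green-ball of the type produced by \autoref{prop:Greensphere}, and so that small perturbations of $f$ defined on single simplices can be reassembled along edges using contractible filling without creating new intersections with $X\setminus\mathcal{R}_\eps$. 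This reduces the problem to perturbing $f$ on a single simplex with image in some $\mathbb{B}_r(p)$.

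Inside such a $\mathbb{B}_r(p)$, the ball is GH-close to a ball $B_r(o)\subset C(Y)$ with cross-section $Y\approx S^2$. Fix $\eps'\ll\eps$. By the Kapovitch--Mondino $\varepsilon$-regularity combined with the metric Reifenberg theorem, $\mathcal{R}_{\eps'}\cap\mathbb{B}_r(p)$ is open and biHölder homeomorphic to an open subset of a smooth Riemannian $3$-manifold, and the complement $\mathbb{B}_r(p)\setminus\mathcal{R}_{\eps'}$ is contained in an arbitrarily small tubular neighborhood of finitely many ``radial arcs'' $\gamma_1,\dots,\gamma_\ell$ emanating from $p$; these arcs correspond to the non-conical points of $Y$ and form a $1$-dimensional set by the Hausdorff dimension bound $\dim_{\haus}(X\setminus\mathcal{R}_{\eps'})\le 1$. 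Pulling back the restricted map through the biHölder chart, I would then invoke the classical general-position principle that a continuous map from a $2$-disk into a smooth $3$-manifold can be $C^0$-approximated by one meeting any prescribed closed $1$-dimensional subset in a $0$-dimensional set, obtained by PL-approximation on a fine triangulation followed by a generic translation of the target near each simplex.

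The main obstacle is promoting this local perturbation, which only controls intersections with $X\setminus\mathcal{R}_{\eps'}$ restricted to a single scale, to a global perturbation with a $0$-dimensional intersection against all of $X\setminus\mathcal{R}_\eps$. To address this I would iterate at smaller scales: the residual image of the perturbed map near each $\gamma_i$ can be covered by finitely many smaller good Green-balls, in each of which the previous argument produces a further perturbation of the current map pushing it off the finer-scale singular strata. Making this iteration converge to a limit $g$ that is still a uniformly small $C^0$-approximation of $f$ requires summable size estimates on successive perturbations; here the local linear contractibility (\autoref{prop:loccontr}) is used to glue adjacent modifications, and the tame (locally $1$-LCC) embedding of good Green-spheres (\autoref{prop:tame}) is used to avoid creating pathological intersections with the boundaries of the nested Green-balls. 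Combined with a diagonal/Baire-category extraction and the Hausdorff dimension bound, this should yield the desired approximation $g$ with $g(\overline D)\cap(X\setminus\mathcal{R}_\eps)$ of topological dimension $0$, establishing general-position dimension one.
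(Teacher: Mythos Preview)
Your approach has a genuine gap, and it diverges from the paper's mechanism in a way that matters.

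The paper's argument does \emph{not} try to push the disk \emph{off} the set $X\setminus\mathcal{R}_\eps$ via transversality in a chart. Instead it pushes the disk \emph{onto} a finite union of good Green-spheres using the deformation retraction of \autoref{lemma:retract}: cover $X\setminus\mathcal{R}_\eps$ by finitely many good Green-balls $\mathbb{B}_{r_i}(p_i)$, perturb $f$ off the centers $p_i$, and then retract $f(\overline D)\cap\mathbb{B}_{r_i}(p_i)$ onto $\mathbb{S}_{r_i}(p_i)$. The intersection $f_\eps(\overline D)\cap(X\setminus\mathcal{R}_\eps)$ is then contained in $\bigcup_i\bigl(\mathbb{S}_{r_i}(p_i)\cap(X\setminus\mathcal{R}_\eps)\bigr)$, and the crucial geometric input is that each $\mathbb{S}_{r_i}(p_i)\cap(X\setminus\mathcal{R}_\eps)$ is \emph{finite}. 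This finiteness comes from \autoref{prop:Greensphere}(iv): at a point of $X\setminus\mathcal{R}_\eps$ there is at most one approximate splitting direction at any scale, and along a good Green-sphere that direction is forced to be the normal one, so the sphere meets $X\setminus\mathcal{R}_\eps$ transversally in this effective sense.

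Your route runs into trouble because the bi-H\"older Reifenberg chart only exists on $\mathcal{R}_{\eps'}$, which is precisely the complement of the set you want to avoid; classical PL general position inside that chart says nothing about what happens where the chart is undefined. Your claim that $\mathbb{B}_r(p)\setminus\mathcal{R}_{\eps'}$ lies in a thin tube around finitely many radial arcs is not justified by the Hausdorff-dimension bound alone (a $1$-dimensional set need not have this structure), and even granting it, the iteration you sketch would have to keep perturbing near points that never enter any chart. There is no clear mechanism guaranteeing the limiting map meets $X\setminus\mathcal{R}_\eps$ in a $0$-dimensional set, nor summable estimates for convergence. The paper sidesteps all of this by trading ``avoid a $1$-dimensional set'' for ``land on a $2$-dimensional set that meets it finitely'', which requires no iteration and no structural claim about $X\setminus\mathcal{R}_\eps$ beyond the transversality with Green-spheres.
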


\begin{proof}[Sketch of the proof]
There are two main steps:
\begin{itemize}
\item[i)] for every good Green-sphere $\mathbb{S}_{r}(p)$, $\mathbb{S}_{r}(p)\cap (X\setminus\mathcal{R}_{\eps})$ is a finite set; 
\item[ii)] any continuous map $f:\overline D\to X$ can be approximated aribtrarily well by continuous $f_{\eps}:\overline D\to X$ such that 
\begin{equation}
B_{\eps}(\mathcal{S}_{\rm top})\cap f_{\eps}(\overline{D})\subset \bigcup_{i\le k}\mathbb{S}_{r_i}(p_i)\, ,
\end{equation} 
where $B_{\eps}(C)$ denotes the $\eps$-enlargment of a set $C\subset X$.
\end{itemize}
Step (i) exploits \autoref{prop:Greensphere} (iv). Roughly speaking, on $X\setminus\mathcal{R}_{\eps}$ there is at most one approximate splitting at every scale. By \autoref{prop:Greensphere} (iv), this splitting must be in the normal direction of the slice. This forces the Green-spheres to intersect $X\setminus\mathcal{R}_{\eps}$ transversely, and hence finiteness of the intersection set. 

For Step (ii), we can cover $X\setminus\mathcal{R}_{\eps}$ with a locally finite collection of good Green-balls $\mathbb{B}_{r_i}(p_i)$. After perturbing away $f(\overline{D})$ from the centers $p_i$, we can apply \autoref{lemma:retract} to push it iteratively onto the boundary Green-spheres $\mathbb{S}_{r_i}(p_i)$. 
\end{proof}

\begin{remark}
Step (i) above generalizes the well-known statement that for a $2$-dimensional Alexandrov space $(Y,\dist_Y)$ with empty boundary, the effective singular set $Y\setminus\mathcal{R}_{\eps}(Y)$ is locally finite. 
\end{remark}

Through the work of T.-L. Thickstun \cite{Thickstuna,Thickstunb} and Perelman's resolution of the Poincar\'e conjecture, \autoref{prop:GPDOS} implies that the generalized $3$-manifold $(X,\dist)$ is resolvable, i.e., there exist a $3$-manifold $N$ and a proper, cell-like, surjective, and continuous map $\Phi:N\to X$.

To complete the proof of \autoref{thm:manrecRCD}, we borrow a general recognition theorem from the work of R.-J. Daverman and D. Repovš \cite{DavermanRepovs}:

\begin{theorem}{\cite[Proposition 1.2, Theorem 3.4]{DavermanRepovs}}\label{thm:recoDavRep}
A resolvable generalized $3$-manifold $(X,\dist)$ is a $3$-manifold if any $x\in X$ is $1$-LCC and admits arbitrarily small neighbourhoods $U$ such that there exist maps $g:S^2\to U\setminus\{x\}$ with the following properties:
\begin{itemize}
\item[(i)] $g:S^2\to g(S^2)\subset X$ is a homeomorphism;
\item[(ii)] $g(S^2)$ is $1$-LCC in $X$;
\item[(iii)] $g:S^2\to U$ is homotopically trivial;
\item[(iv)] $g:S^2\to U\setminus\{x\}$ is not homotopically trivial.
\end{itemize}
\end{theorem}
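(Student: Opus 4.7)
The hypothesis supplies a proper cell-like surjection $\Phi : N \to X$ with $N$ a topological $3$-manifold. My strategy is to verify that every fiber $\Phi^{-1}(x)$ is a cellular subset of $N$; once this is known, classical decomposition-theoretic results on cell-like maps between $3$-manifolds (in the spirit of Armentrout and McMillan) imply that $\Phi$ can be approximated by homeomorphisms, hence $X \approx N$ is a topological $3$-manifold. I intend to check cellularity through McMillan's criterion: a cell-like compactum $K \subset N$ is cellular if and only if every neighborhood $V$ of $K$ contains a smaller neighborhood $W$ of $K$ such that every loop in $W \setminus K$ is null-homotopic in $V \setminus K$. It suffices, and is more convenient, to show the stronger statement that $\Phi^{-1}(x)$ has arbitrarily small $3$-ball neighborhoods in $N$.

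Fix $x \in X$ and a neighborhood $\tilde V \subset N$ of the compactum $\Phi^{-1}(x)$. By properness of $\Phi$, I can choose an open set $V \ni x$ with $\Phi^{-1}(V) \subset \tilde V$. The hypothesis produces a neighborhood $U \subset V$ and a map $g : S^2 \to U \setminus \{x\}$ satisfying (i)--(iv). The $1$-LCC property (ii) of $g(S^2)$ in $X$, combined with the approximate lifting property enjoyed by cell-like maps out of compact polyhedra, will allow me to perturb $g$ slightly to an embedding $\tilde g : S^2 \hookrightarrow N \setminus \Phi^{-1}(x)$ with $\Phi \circ \tilde g$ uniformly close to $g$; a sufficiently small perturbation keeps $\tilde g(S^2)$ inside $\Phi^{-1}(U) \subset \tilde V$.

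Property (iii) provides a singular disk in $U$ filling $g$, and the same cell-like approximate lifting mechanism upgrades it to a null-homotopy of $\tilde g$ inside $\Phi^{-1}(U)$. Being a null-homotopic embedded $S^2$ in the $3$-manifold $N$, $\tilde g(S^2)$ bounds a topological $3$-ball $B \subset N$ by the Papakyriakopoulos sphere theorem together with the resolution of the Poincar\'e conjecture. Condition (iv) says $g$ is nontrivial in $U \setminus \{x\}$, which forces $x$ to lie in the bounded complementary region of $g(S^2)$ inside $U$: otherwise $g$ would be null-homotopic in $U \setminus \{x\}$ since the unbounded side deformation-retracts away from $g(S^2)$ using the resolvable structure of $X$. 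Projecting through $\Phi$, this separation statement yields $\Phi^{-1}(x) \subset B \subset \tilde V$. Since $U$ can be taken arbitrarily small (the $1$-LCC assumption at $x$ guarantees that the sphere hypothesis remains available at every scale), $\Phi^{-1}(x)$ admits arbitrarily small $3$-ball neighborhoods in $N$, so it is cellular.

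The main technical obstacle I foresee is the double lifting step: turning the approximate lifts of $g$ and of its null-homotopy through the cell-like (but not necessarily homeomorphism-like) map $\Phi$ into honest embedded and extended maps that remain in the prescribed small neighborhoods while avoiding $\Phi^{-1}(x)$ entirely. The $1$-LCC condition on $g(S^2)$ is precisely what permits perturbations away from the non-manifold set inside $X$, while condition (iv) is indispensable for identifying the ``inside'' component of the embedded $2$-sphere in which $\Phi^{-1}(x)$ lives. The Poincar\'e conjecture enters essentially in the final upgrade from ``null-homotopic embedded $S^2$'' to ``bounds a $3$-ball'', and cannot be bypassed at this level of generality. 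Once cellularity is in hand at every point, the passage from cellular decomposition to honest manifold structure is standard.
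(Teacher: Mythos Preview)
The paper does not give a proof of this result; it is quoted from Daverman--Repov\v{s} and used as a black box. So there is no paper's proof to compare against, and I assess your sketch on its own terms.

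Your global strategy---verify that each fibre $\Phi^{-1}(x)$ of the resolution is cellular in $N$ and then invoke Armentrout's theorem that such a cell-like map is a near-homeomorphism---is the correct framework. The genuine gap is in the middle step where you try to manufacture a tame embedded $2$-sphere in $N$ bounding a $3$-ball around $\Phi^{-1}(x)$. Approximate lifting through a cell-like map produces only a \emph{map} $\tilde g:S^2\to N$ with $\Phi\circ\tilde g$ close to $g$, not an embedding; the $1$-LCC hypothesis (ii) concerns $g(S^2)$ downstairs in $X$ and does not by itself promote $\tilde g$ to a tame embedding upstairs in $N$. Even granting a tame embedding, a null-homotopic embedded $2$-sphere in a $3$-manifold need not bound a $3$-ball (think of the Alexander horned sphere for wildness, or note that null-homotopy of the sphere in $\Phi^{-1}(U)$ says nothing about simple connectivity of the compact complementary component, which is what Poincar\'e would actually require). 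The Papakyriakopoulos sphere theorem is also being invoked in the wrong direction: it produces essential embedded spheres from nontrivial $\pi_2$, not fillings of inessential ones.

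A more direct path to cellularity, which your argument does not exploit, comes straight from the $1$-LCC hypothesis \emph{at the point} $x$. A proper cell-like map between finite-dimensional ANRs is a hereditary homotopy equivalence, so $\Phi$ restricted over any open $V'\setminus\{x\}$ induces an isomorphism on $\pi_1$; this converts $1$-LCC of $\{x\}$ in $X$ directly into McMillan's cellularity criterion for the fibre $\Phi^{-1}(x)$ in $N$, and McMillan's theorem in dimension three (which, as you correctly note, needs the Poincar\'e conjecture) finishes. The sphere hypotheses (i)--(iv) in Daverman--Repov\v{s} reflect the general-position viewpoint of that paper and its pre-Perelman context; leaning on them the way you do introduces tameness obstacles rather than removing them.
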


\begin{proof}[Completing the proof of \autoref{thm:manrecRCD}]
We are left with the proof of the implication from the blow-ups being homeomorphic to $\R^3$ to $X$ being a topological $3$-manifold. We argued above that $(X,\dist)$ is a resolvable generalized $3$-manifold. The maps $g$ as in the assumptions of \autoref{thm:recoDavRep} can be taken to be homeomorphic parameterizations of (sufficiently small) good Green-spheres $\mathbb{S}_r(x)$. That such good Green-spheres are homeomorphic to $S^2$ follows from the assumption about the blow-ups of $(X,\dist)$ and \autoref{prop:loccont3d}. Item (ii) corresponds to \autoref{prop:tame}. Item (iii) follows from \autoref{prop:loccontr}. Item (iv) follows from \autoref{lemma:retract}. 
\end{proof}

\subsection{Remarks}
We conclude this section with an informal discussion about some of the key $3$-dimensional aspects of the proof of \autoref{thm:manrecRCD}.
\begin{itemize}
\item[(i)] The almost conicality away from uniformly finitely many scales at all points, i.e., \autoref{lemma:conical}, holds in any dimension. However, being close to a cone in the Gromov-Hausdorff sense is enough to activate a slicing mechanism only in dimension $3$, unless the cone almost splits a factor $\R^{n-3}$, see \autoref{rm:slicingfails}. 
\item[(ii)] In any dimension $n$, the combination of a lower bound on the Ricci curvature with a lower bound on the volume yields uniform finiteness for fundamental groups. However, while for a (closed) $3$-manifold the fundamental group carries a great deal of information about the topology, this is not the case in higher dimensions. Indeed, in the counterexamples to a manifold recognition theorem based on the blow-up behaviour discussed in \autoref{rm:nomanrec4}, the main issues are at the level of the second homotopy groups.
\item[(iii)] As clearly illustrated by \autoref{thm:3d}, in dimension $3$ a qualitative information, i.e., being a manifold, combined with the nonnegativity of Ricci curvature and Euclidean volume growth, uniquely determines the large-scale topological behaviour, i.e., the blow-down is homeomorphic to $\R^3$. The proof of \autoref{thm:manrecRCD} heavily hinges on this moral. On the regions where the manifold regularity is already known, we propagate this qualitative information from infinitesimal scales to finite ones, as in \autoref{cor:goodGman}. Simultaneously, we reverse the moral to push the \emph{nonmanifoldness} to infinitesimal scales, where we can exploit the information on the blow-up behaviour, as in the proof of \autoref{prop:loccontr}.
\end{itemize}

\subsection{The stability \autoref{thm:stability3d}}

The proof of \autoref{thm:stability3d} hinges on a local uniform contractibility statement:

\begin{theorem}{\cite[Theorem 1.10]{BruePigatiSemola}}\label{thm:locunicontr}
Let $v>0$ be fixed. There exist constants $C=C(v)>0$ and $\rho=\rho(v)>0$ such that if $(X,\dist,\haus^3)$ is an $\RCD(-2,3)$ topological manifold with $\haus^3(B_1(p))\ge v$ for any $p\in X$, then the ball $B_r(p)$ is contractible inside $B_{Cr}(p)$ for every $0<r\le\rho$ and every $p\in X$.
\end{theorem}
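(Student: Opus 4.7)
The plan is to deduce \autoref{thm:locunicontr} from \autoref{prop:loccontr} by verifying that, in the $\RCD(-2,3)$ setting, the topological-manifold hypothesis forces every blow-up of $(X,\dist)$ to be homeomorphic to $\R^3$; the quantitative conclusion is then immediate. In essence, this theorem is the specialization of \autoref{prop:loccontr} obtained by composing it with the easy direction of the manifold recognition \autoref{thm:manrecRCD}.

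The first step I would carry out is to show that, under the assumptions of \autoref{thm:locunicontr}, for each $x\in X$ every blow-up $C(W)$ of $(X,\dist)$ at $x$ is homeomorphic to $\R^3$. Fix such a point $x$ and such a blow-up $C(W)$. By \autoref{lemma:conical} combined with \autoref{prop:Greensphere}, for all sufficiently small radii $r$ we may pick a good Green-sphere $\mathbb{S}_r(x)$, and by \autoref{prop:loccont3d}(ii) it is homeomorphic to the cross-section $W$, which in turn is constrained to be homeomorphic either to $S^2$ or to $\mathbb{RP}^2$. Because $X$ is a topological $3$-manifold, for $r$ sufficiently small the closed Green-ball $\overline{\mathbb{B}}_r(x)$ is contained in $X\setminus\mathcal{S}_{\rm top}$, so \autoref{cor:manifoldwithboundary} shows that $\overline{\mathbb{B}}_r(x)$ is a compact $3$-manifold with boundary $\mathbb{S}_r(x)$. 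Since the boundary of a compact $3$-manifold has even Euler characteristic, we can rule out $\mathbb{S}_r(x)\approx\mathbb{RP}^2$, whence $W\approx S^2$ and $C(W)\approx\R^3$.

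Once this is established, the hypotheses of \autoref{prop:loccontr} are satisfied, and that proposition produces constants $C=C(v)>0$ and $\rho=\rho(v)>0$, depending only on $v$, such that $B_r(p)$ is contractible inside $B_{Cr}(p)$ for every $p\in X$ and every $0<r\le\rho$. This is precisely the conclusion of \autoref{thm:locunicontr}.

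The only delicate point to monitor is the uniformity of the constants: the qualitative input ``all blow-ups are $\R^3$'' is a priori non-uniform across the class of admissible $(X,\dist,\haus^3)$, but it is already converted into a uniform quantitative bound depending only on $v$ inside \autoref{prop:loccontr}, via the Baire-category contradiction argument mentioned there. Hence no additional quantitative work is needed for the present theorem, which follows by a black-box combination of \autoref{prop:loccontr}, \autoref{prop:loccont3d}, and \autoref{cor:manifoldwithboundary}.
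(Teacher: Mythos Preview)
Your proposal is correct and matches the paper's own approach: the paper states that \autoref{thm:locunicontr} was ``already proved in the previous section when we proved \autoref{prop:loccontr},'' having earlier established the easy direction of \autoref{thm:manrecRCD} via exactly the argument you give (good Green-spheres, \autoref{prop:loccont3d}(ii), \autoref{cor:manifoldwithboundary}, and the Euler-characteristic parity obstruction to $\mathbb{RP}^2$). Your remark on uniformity is also in line with how the paper handles the constants.
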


\begin{remark}
For smooth Riemannian manifolds \autoref{thm:locunicontr} corresponds to \cite[Proposition 3.1]{Zhu93}, whose proof is based on a different idea. It is conceivable that the arguments in \cite{Zhu93} could be used to prove \autoref{thm:locunicontr} as well. 
\end{remark}

\begin{remark}
Note that \autoref{thm:locunicontr} fails to extend to higher dimensions. Indeed, the local uniform $1$-contractibility of the class of $n$-dimensional Riemannian manifolds with Ricci curvature and volume uniformly bounded from below fails as soon as $n\ge 4$; see \cite[Remark 2, pg. 262]{Otsu}.
\end{remark}  

We already proved \autoref{thm:locunicontr} in the previous section when we proved \autoref{prop:loccontr}. 

\begin{remark}\label{rm:Semmes*}
Actually, under the same assumptions, we proved something stronger: for every $p\in X$ and every $0<r<\rho$, there is an open domain $B_r(p)\subset U\subset B_{Cr}(p)$ homeomorphic to the Euclidean $3$-ball. 
\end{remark}

We can exploit \autoref{thm:locunicontr} in combination with the results from \cite{Petersen90} to establish a weak version of \autoref{thm:stability3d}, with homeomorphisms replaced by homotopy equivalences. 

\begin{proposition}\label{prop:epsequiv}
Under the assumptions of \autoref{thm:stability3d} there exists $i_0\in \N$ such that for every $i\ge i_0$ there exist $\epsilon_i$-equivalences $f_i:X_i\to X$ with $\epsilon_i\to 0$ as $i\to \infty$, i.e., the $f_i$'s are continuous and there exists continuous $g_i:X\to X_i$ such that, for every $i\ge i_0$: 
\begin{itemize}
\item[i)] $f_i\circ g_i$ is homotopic to $\mathrm{id}_X$ through a homotopy $G_i$;
\item[ii)] $g_i\circ f_i$ is homotopic to $\mathrm{id}_{X_i}$ through a homotopy $F_i$;
\item[iii)] all the flow lines of $F_i$ and $G_i$ have diameter less than $\epsilon_i$.
\end{itemize}
\end{proposition}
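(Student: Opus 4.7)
The strategy is to adapt Petersen's construction from \cite{Petersen90} for producing homotopy equivalences between Gromov-Hausdorff close metric spaces in a uniformly locally contractible family. The key input is \autoref{thm:locunicontr}, which ensures that $\{(X_i,\dist_i)\}_{i\in\mathbb{N}}$ is uniformly locally $(C_0,\rho_0)$-contractible with $C_0,\rho_0$ depending only on $v$.

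First, I would verify that $(X,\dist)$ is itself locally uniformly contractible. This is standard from the Gromov-Hausdorff convergence: any loop in $B_r(p)\subset X$ can be lifted via an approximation to a loop in $B_{r+o(1)}(p_i)\subset X_i$, contracted inside $B_{C_0 r}(p_i)$, and pushed back to a contraction in $B_{C_0 r + o(1)}(p)\subset X$. Hence $X$ is locally $(C_0',\rho_0')$-contractible with constants depending only on $v$. Together with the noncollapsing bound and finite Hausdorff dimension, this places $\{X_i\}\cup\{X\}$ into Petersen's framework of uniformly locally contractible, finite-dimensional compact metric spaces.

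Next, fix $\delta_i\to 0$ with $\delta_i\ll\epsilon_i$ and $C_0\delta_i\ll\rho_0'$, together with Gromov-Hausdorff approximations $\Phi_i:X_i\to X$ of distortion $\eta_i\to 0$. Choose a maximal $\delta_i$-separated net $\{q_\alpha\}$ in $X$ and pick $p_\alpha^i\in X_i$ with $\dist_X(\Phi_i(p_\alpha^i),q_\alpha)\le\eta_i$. Take a partition of unity $\{\chi_\alpha\}$ subordinate to the open cover $\{B_{2\delta_i}(q_\alpha)\}$ and let $\mathcal{N}$ denote its nerve. Construct $g_i:X\to X_i$ by induction on the skeleta of $\mathcal{N}$: on vertices set $g_i(q_\alpha):=p_\alpha^i$, and extend over each $k$-simplex by using local contractibility to contract the boundary image, which lies in a ball of radius $O(\delta_i)$ in $X_i$, inside a ball of radius $O(C_0\delta_i)\ll\rho_0$. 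Construct $f_i:X_i\to X$ by the symmetric procedure. The resulting maps satisfy $\dist_X(f_i(x),\Phi_i(x))=o(1)$ and $\dist_i(g_i(y),\Phi_i^{-1}(y))=o(1)$ uniformly.

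Finally, build the homotopies $F_i$ and $G_i$. The composition $g_i\circ f_i:X_i\to X_i$ moves every point by at most $O(\delta_i+\eta_i)$. To interpolate to the identity, apply the same nerve construction to a sufficiently fine partition of unity on $X_i$ at scale $\tau_i\ll\delta_i$: on each small simplex, both the initial map $g_i\circ f_i$ and the identity land in a common ball of radius $O(\tau_i+\delta_i+\eta_i)$, so local $(C_0,\rho_0)$-contractibility fills in a homotopy inside a ball of radius $O(C_0(\tau_i+\delta_i+\eta_i))$. Choosing $\tau_i$ so small that $C_0(\tau_i+\delta_i+\eta_i)<\epsilon_i$ ensures every flow line of $F_i$ has diameter below $\epsilon_i$. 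The identical scheme produces $G_i$ on $X$. The main obstacle is the multi-scale bookkeeping: one must pick $\delta_i\gg\eta_i$ so vertices can be matched across the GH approximation, yet $C_0\delta_i\ll\rho_0'$ so contractibility is applicable for the nerve extension, and simultaneously arrange $C_0(\tau_i+\delta_i+\eta_i)<\epsilon_i$ for the final flow-line bound. Since $C_0,\rho_0,\rho_0'$ are uniform constants from \autoref{thm:locunicontr}, this amounts to a routine but careful choice of scales in the correct order.
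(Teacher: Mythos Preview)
Your approach is essentially the same as the paper's: combine the uniform local contractibility from \autoref{thm:locunicontr} with Petersen's nerve-based construction from \cite{Petersen90}, and you supply considerably more implementation detail than the paper (which simply cites these two ingredients). One small caveat: your heuristic for transferring local contractibility to the limit $X$ by ``lifting a loop to $X_i$, contracting, and pushing back'' is circular as written, since pushing a filling disk back to $X$ through a discontinuous GH approximation already requires filling simplices in $X$, i.e.\ local contractibility there; this is exactly what Petersen's induction over connectivity degrees handles, so once you genuinely invoke his framework (as you do in the next paragraph) the issue disappears.
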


The imprecise idea is that local uniform contractibility removes the obstruction to extend maps continuously.
\medskip

While the proof of \autoref{thm:locunicontr} in \cite{BruePigatiSemola} is (essentially) self-contained, the proof of \autoref{thm:stability3d} is definitely not, as it heavily relies on some deep tools from controlled homotopy theory. We sketch it below.

\begin{proof}[Sketch of the proof of \autoref{thm:stability3d}]
All the blow-ups of the limit $(X,\dist)$ in \eqref{eq:limstab} have cross-section homeomorphic to $S^2$ by a mild generalisation of \autoref{thm:noRP2BPS}. Hence, $X$ is a topological $3$-manifold, by \autoref{thm:manrecRCD}. Thus, we can invoke the $3$-dimensional $\alpha$-approximation theorem due to W. Jakobsche in \cite{Jakobsche}, taking into account the resolution of the Poincaré conjecture, to perturb the $\epsilon_i$-equivalences in \autoref{prop:epsequiv} to homeomorphisms. 
\end{proof}

\begin{remark}
The terminology \emph{$\alpha$-approximation} comes from the work of T.-A. Chapman and S. Ferry \cite{ChapmannFerry}, where an analogous statement was obtained before in dimensions $n\ge 5$.
\end{remark}

\begin{remark}
The proof of \autoref{thm:stability3d} is very similar in spirit to the proof of Perelman's stability theorem for noncollapsing limits of smooth Riemannian manifolds with sectional curvature bounded below discussed in \cite[Section 3]{KapovitchPer}. 
\end{remark}

\section{Open questions}\label{sec:OpenQuest}

We end this survey with a collection of conjectures and open questions. Some of them are well-known, and some are original, to the best of the author's knowledge.  

\subsection{Improving the regularity in the manifold recognition }

In the context of the manifold recognition \autoref{thm:manrecRCD}, it would be interesting to understand whether a more regular homeomorphism with a smooth Riemannian manifold can be constructed. In \cite{BruePigatiSemola} we raised the following:

\begin{conjecture}{\cite[Conjecture 1.18]{BruePigatiSemola}}\label{conj:biHolder}
Let $(X,\dist,\haus^3)$ be an $\RCD(-2,3)$ space such that all blow-ups are homeomorphic to $\R^3$. Then $(X,\dist)$ is locally biH\"older homeomorphic to a smooth, complete Riemannian manifold $(M^3,g)$. 
\end{conjecture}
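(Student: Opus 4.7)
By \autoref{thm:manrecRCD}, $X$ is a topological $3$-manifold, hence carries a (unique) smooth structure and at least one smooth Riemannian metric $g$; the conjecture then reduces to producing, at each $x\in X$, a biH\"older chart from a neighbourhood of $x$ to an open subset of $\R^3$. On the Kapovitch--Mondino effective regular set $\mathcal{R}_{\eps}$ such charts are already supplied by the metric Reifenberg construction. The task therefore narrows to building biH\"older charts at points of $\mathcal{N}:=X\setminus\mathcal{R}_{\eps}$, which has Hausdorff dimension at most one and general-position dimension one (\autoref{prop:GPDOS}).

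The first ingredient I would try to establish is a \emph{uniform biH\"older classification of the cross-sections that can appear}: any $\RCD(1,2)$ space $(Z,\dist_Z,\haus^2)$ homeomorphic to $S^2$ with $\haus^2(Z)\ge v>0$ should be biH\"older equivalent to the round $2$-sphere, with exponent and constants depending only on $v$. Such a $Z$ is a $2$-dimensional Alexandrov space with curvature $\ge 1$ by Lytchak--Stadler, and by Alexandrov's embedding theorem it arises as the boundary of a convex body $K\subset\R^3$ of diameter $\le\pi$ and volume bounded below in terms of $v$. A compactness argument combined with the finiteness (with controlled cardinality) of the Alexandrov singular set of such spheres, in the spirit of Step (i) in the proof of \autoref{prop:GPDOS}, should yield the uniform biH\"older bound.

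Fix $x\in\mathcal{N}$ and use \autoref{lemma:conical} together with \autoref{prop:Greensphere} to pick a sequence $r_i\downarrow 0$ of good radii for which $\mathbb{B}_{r_i}(x)$ is $\eps_i$-GH-close, after rescaling by $r_i^{-1}$, to a cone $C(Z_i)$ with $(Z_i,\dist_{Z_i})$ as above. Combining the biH\"older parametrisations $\psi_i:\mathbb{S}_{r_i}(x)\to S^2$ supplied by the key lemma of the previous paragraph with the radial Green-distance coordinate $b_x$, we obtain candidate charts $\Phi_{r_i}:\mathbb{B}_{r_i}(x)\to \overline B_{r_i}(0)\subset\R^3$. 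Item (iv) of \autoref{prop:Greensphere} controls the radial distortion at every scale, so the delicate point is the \emph{cross-scale compatibility} of the $\Phi_{r_i}$: one must check that $\Phi_{r_{i+1}}$ agrees with $\Phi_{r_i}|_{\mathbb{B}_{r_{i+1}}(x)}$ up to a biH\"older error that sums geometrically in $i$, so that the family stabilises to a single limit chart at $x$. Gluing with the Reifenberg charts on $\mathcal{R}_{\eps}$ is then a partition-of-unity argument, using that $\mathcal{N}$ is nowhere dense and has general-position dimension one.

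The main obstacle I foresee is exactly this cross-scale compatibility: it hinges on a \emph{quantitative} biH\"older stability result for the parametrisations $\psi_i$ under Gromov--Hausdorff perturbations of the $Z_i$, which, to my knowledge, is not available in the required generality even for smooth $\RCD(1,2)$ two-spheres. An alternative that would bypass this step would be a $3$-dimensional Ricci-flow smoothing in the spirit of \cite{Simon14,SimonTopping22a}, run directly from the $\RCD(-2,3)$ initial datum; this however requires extending their Ricci-flow framework beyond the Ricci-limit setting, which appears of comparable difficulty.
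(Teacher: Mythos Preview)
This statement is presented in the paper as an \emph{open conjecture} (in Section~\ref{sec:OpenQuest}), not as a theorem; the paper gives no proof and explicitly discusses why the problem remains open. Your write-up is accordingly not a proof but a strategy outline, and you yourself flag its incompleteness. That is honest, but a few points deserve comment.

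Your reduction in the first paragraph is sound: since any smooth Riemannian $3$-manifold is locally biLipschitz to $\R^3$, the conjecture is equivalent to producing local biH\"older charts to Euclidean balls, and restricting attention to $\mathcal{N}=X\setminus\mathcal{R}_{\eps}$ is correct.

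The gap you identify---cross-scale compatibility of the charts $\Phi_{r_i}$---is genuine and, with current techniques, decisive. The paper makes essentially the same point in different language: it observes that $\RCD(-2,3)$ manifolds satisfy Semmes' condition~$(\ast)$ from \cite{Semmes96}, conjectures they satisfy the stronger condition~$(\ast\ast)$, and then notes that \emph{even} $(\ast\ast)$ together with Ahlfors regularity is not known to yield biH\"older parametrisability for general metric manifolds. Your approach would require something strictly stronger still, namely a quantitative biH\"older stability of the parametrisations $\psi_i$ under Gromov--Hausdorff perturbation of the $Z_i$, and no such result is available. Your first ingredient (uniform biH\"older equivalence of $\RCD(1,2)$ two-spheres to the round sphere) is plausible and may well follow from convex-surface theory, but it does not touch the compatibility issue: even if each $\psi_i$ is individually well-controlled, nothing forces $\psi_{i+1}$ to be close to the restriction of $\psi_i$.

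The Ricci-flow alternative you mention is precisely what settles the conjecture in the special case of noncollapsed Ricci limits, via \cite{Simon14,SimonTopping22,SimonTopping22a,McLeodTopping}, as the paper records. Extending that flow to a general $\RCD(-2,3)$ initial datum is essentially \autoref{quest:smoothingRCD}, which is also open.
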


If $(X^3,\dist)$ is a noncollapsed Ricci limit, then \autoref{conj:biHolder} holds, thanks to the already mentioned works of Simon and Simon and Topping \cite{Simon14,SimonTopping22,SimonTopping22a}, and A. McLeod and Topping \cite{McLeodTopping}.\\ 
As we stressed in \autoref{rm:Semmes*}, an $\RCD(-2,3)$ space $(X,\dist,\haus^3)$ with all blow-ups homeomorphic to $\R^3$ is a manifold in an ``effective'' way, at least locally. This corresponds to condition $(\ast)$ in Seemes' \cite[Definition 1.3]{Semmes96}. As shown therein, there exist Ahlfors $3$-regular metric $3$-manifolds satisfying condition $(\ast)$ which are not biH\"older homeomorphic to smooth Riemannian $3$-manifolds. It seems conceivable that $\RCD(-2,3)$ manifolds satisfy condition $(\ast\ast)$ in \cite[Definition 1.7]{Semmes96} as well. To the best of the author's knowledge, it is an open question whether the combination of Ahlfors regularity and the said condition $(\ast\ast)$ yields the existence of locally biH\"older homeomorphisms with a smooth Riemannian manifold for general metric manifolds; see the discussion in \cite[Section 12]{Semmes96}.\\
Of course, establishing \autoref{conj:biHolder} would pave the way for a biH\"older version of the stability \autoref{thm:stability3d}.
\medskip

For $3$-dimensional Alexandrov spaces $(X^3,\dist)$ which are topological manifolds, the biLipschitz version of the stability theorem (an unpublished result of Perelman, see the discussion in \cite{KapovitchPer}) together with the argument sketched at the very end of Section \ref{sec:prelle3} yield (locally) biLipschitz homeomorphisms with subsets of $\R^3$ near to every point. It is natural to ask:

\begin{question}\label{q:biLip}
Is there any $\RCD(-2,3)$ space $(X,\dist,\haus^3)$ with all blow-ups homeomorphic to $\R^3$ which is not (locally) biLipschity homeomorphic to a smooth, complete, Riemannian $(M^3,g)$?
\end{question}

We stress that for a general $\RCD(-2,3)$ space $(X,\dist,\haus^3)$ as above, it is an open question whether there exists an open subset $U\subset X$ biLipschitz homeomorphic to an open set in $\R^3$. The question is open for noncollapsed Ricci limits as well, see \cite[pg. 411]{CheegerColding97I} and \cite[Open Problem 2.2]{Naberconj}. 

\subsection{Regularization}

The next question has been around for some time, although it did not appear in print before \cite{BruePigatiSemola}. See also the recent survey \cite[Section 7]{Simon24}.

\begin{question}\label{quest:smoothingRCD}
Let $(X,\dist,\haus^3)$ be an $\RCD(-2,3)$ space which is a topological manifold. Is it a noncollapsed Ricci limit space?
\end{question}

The analogous question for $3$-dimensional Alexandrov spaces with curvature bounded from below has been open from the Eighties; see, for instance, the discussion around \cite[2.4 Open Problem]{Lebedevaetal}. 
\medskip

We mentioned in Section \ref{sec:curvge0} that the cross-section of the blow-down of every $(M^n,g)$ with $\mathrm{Sect}\ge 0$ and Euclidean volume growth is a smoothable Alexandrov space homeomorphic to $S^{n-1}$, thanks to \cite{Kapovitchcross}. By \autoref{thm:3d}, this is the case also for every $(M^3,g)$ with $\Ric\ge 0$ and Euclidean volume growth. In view of \autoref{thm:mainIntro}, the next question can be seen as dealing with a special case of \autoref{quest:smoothingRCD}.

\begin{question}\label{quest:smoothingcross}
Let $(M^4,g)$ be smooth, complete, with $\Ric\ge 0$ and Euclidean volume growth. Let $(Z,\dist_Z,\haus^3)$ be the cross-section of a blow-down of $(M^4,g)$. Is $(Z,\dist_Z)$ a noncollapsed Ricci limit space?
\end{question}

Similarly, one might ask:

\begin{question}\label{quest:smoothingslice}
Let $(M^n_i,g_i,p_i)$ be smooth, complete Riemannian manifolds with $\Ric_i\ge-\delta_i$, $\delta_i\to 0$ as $i\to\infty$. Assume that 
\begin{equation}
(M^n_i,g_i,p_i)\xrightarrow{\mathrm{pGH}}(\R^{n-3}\times Z,\dist_{\R^{n-3}}\times\dist_Z,(0,p))\, ,\quad\text{as $i\to\infty$}\, , 
\end{equation}
and $\haus^3(B_r(p))\ge vr^3$ for every $r>0$, for some $v>0$. Is $(Z,\dist_Z)$ a noncollapsed Ricci limit space?
\end{question}

The slice $Z$ arising in \autoref{quest:smoothingslice} is homeomorphic to $\R^3$, by \cite[Theorem 1.7, Theorem 1.9]{BruePigatiSemola}. It seems tempting to approach \autoref{quest:smoothingslice} and \autoref{quest:smoothingcross} via Ricci flow. Thanks to an unpublished result due to R. Hochard \cite[Lemma I.3.12]{Hochard}, to answer \autoref{quest:smoothingslice} in the affirmative, it would suffice to construct a smooth complete Ricci flow $(M^n,g(t))_{t\in (0,T)}$ such that, for every $t\in(0,T)$ and for some $K>0$,
\begin{equation}
\left|\mathrm{Riem}(g(t))\right|\le \frac{K}{t}\, ,\quad  \mathrm{inj}_x(g(t))\ge \sqrt{\frac{K}{t}}\, ,\quad \Ric(g(t))\ge 0\, ,
\end{equation}
and
\begin{equation}\label{eq:convHochard}
\lim_{t\to 0}(M,\dist_{t})=(\R^{n-3}\times Z,\dist_{\R^{n-3}}\times\dist_Z)\, .
\end{equation}
We address the reader to Hochard's PhD thesis \cite{Hochard} for the precise notion of convergence employed in \eqref{eq:convHochard} and a thorough discussion about some of the difficulties related to this approach.

\subsection{Topology of $(M^4,g)$ with $\Ric\ge 0$ and Euclidean volume growth}

The examples with infinite topological type constructed by Menguy in \cite{Menguy inftop} clearly illustrate that the topology of manifolds within this class might be quite complicated. As far as the author is aware, the only general topological results are: 
\begin{itemize}
\item[i)] $|\pi_1(M)|<\infty$, due to Li \cite{Lilarge} and Anderson \cite{Andersonpi1} independently; 
\item[ii)] $H_3(M;\mathbb{Z})=0$, due to Y. Itokawa and R. Kobayashi in \cite{ItokawaKobayashi};
\item[iii)] $H_2(M;\mathbb{Z})$ is torsion-free, due to Z. Shen and C. Sormani in \cite{ShenSormani}.
\end{itemize}
Up to changing $3$ into $(n-1)$ and $2$ with $(n-2)$, (i), (ii), and (iii) above hold for any $(M^n,g)$ with $\Ric\ge 0$ and Euclidean volume growth. \\
Brena, Bru\`e and Pigati recently proved that any $(M^4,g)$ with $\Ric\ge 0$ and Euclidean volume growth is orientable in \cite{BrenaBruePigati}, partly relying on \autoref{thm:mainIntro}. Such a result does not generalize to $n>4$. Indeed, $\mathbb{RP}^2\times\mathbb{R}^3$ admits a complete metric $g$ with $\mathrm{Ric}\ge 0$ and Euclidean volume growth; see \cite{Otsu}.
It seems conceivable to the author that \autoref{thm:mainIntro} could help classify all possible fundamental groups of $(M^4,g)$ with $\Ric\ge 0$ and Euclidean volume growth.
\medskip

If $\Ric\equiv 0$, then $M^4$ can be compactified to a smooth manifold with boundary with boundary diffeomorphic to the cross-section of the blow-down. No such statement can hold, in general, when we only assume that $\Ric\ge 0$. 
It seems natural to ask whether the finiteness of the topological type is the only obstruction.

\begin{question}\label{quest:compactification}
Let $(M^4,g)$ be smooth complete with $\Ric\ge 0$, Euclidean volume growth, and finite topological type. Does there exist a compact topological manifold with boundary $\overline{M}$ such that $\partial M$ is homeomorphic to the cross-section of a blow-down of $(M,g)$ and $\overline{M}\setminus\partial M$ is homeomorphic to $M$?
\end{question}

In all the examples available in the literature at the time of writing, a nontrivial fundamental group for the cross-section of the blow-down of $(M^4,g)$ can only appear at the price of nontrivial homotopy groups for $M$.

\begin{question}\label{quest:contractcross}
Let $(M^4,g)$ be smooth complete with $\Ric\ge 0$ and Euclidean volume growth. Assume that $M$ is contractible. Can the cross-sections of the blow-downs of $(M,g)$ be not simply connected?
\end{question} 

The examples constructed by S. Zhou in \cite{Zhou23} suggest that \autoref{quest:contractcross} might be a subtle one.
\medskip

To the best of the author's knowledge, the unique smooth contractible $4$-manifold which is known to admit a complete metric with $\Ric\ge 0$ and Euclidean volume growth is $\R^4$ (with the standard smooth structure).

\begin{question}\label{quest:contrtopdif}
Do there exist contractible $4$-manifolds $M$ which admit a complete smooth metric $g$ with $\Ric\ge 0$ and Euclidean volume growth and are not homeomorphic (or diffeomorphic) to $\R^4$?
\end{question}

Note that if a contractible $M^4$ admits a Ricci-flat metric with Euclidean volume growth, then $M^4$ is diffeomorphic to $\R^4$ and the metric is flat. This follows from \cite[Lemma 6.3]{Anderson89} combined with \cite[Corollary 8.85]{CheegerNaber15}, stated here as \autoref{thm:ChNaALE}. 

As pointed out to me by Shengxuan Zhou, the questions discussed in this section are mostly open also in the case of Kähler surfaces, unless one assumes that they are polarized or Stein.

\subsection{Generalizations to $\RCD$ spaces}

It is conceivable that for every $\RCD(-2,3)$ space $(X,\dist,\haus^3)$ every $x\in X$ should have a neighbourhood homeomorphic to a blow-up of $(X,\dist)$ at $x$. Such a statement would (almost) extend Perelman's conical neighbourhood theorem for Alexandrov spaces with curvature bounded from below from \cite{Perelman99} to the present setting. Moreover, it would resolve in the affirmative the following:

\begin{conjecture}[Mondino '22]\label{conj:Mondino}
Every $\RCD(-2,3)$ space $(X,\dist,\haus^3)$ is homeomorphic to an orbifold (possibly with boundary). 
\end{conjecture}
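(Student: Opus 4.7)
The plan is to extend the topological recognition framework developed in \cite{BruePigatiSemola} from manifolds to orbifolds, mirroring the architecture of the proof of \autoref{thm:manrecRCD}. First I would reduce to a local statement: it suffices to show that every $x\in X$ admits a neighborhood homeomorphic to $\R^3/\Gamma$ or $\R^3_+/\Gamma$ for some finite subgroup $\Gamma<\mathrm{O}(3)$.

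I would then classify the possible topological types of tangent cones. By Ketterer's theorem combined with \cite[Theorem 1.1]{LytchakStadler}, the cross-section $(Y,\dist_Y,\haus^2)$ of any tangent cone $C(Y)$ is a $2$-dimensional Alexandrov space with curvature $\ge 1$, and hence is homeomorphic to $S^2$, $\mathbb{RP}^2$, or the closed disk $\overline{D}^2$. Correspondingly, the tangent cones are homeomorphic to $\R^3$, $\R^3/\{\pm I\}$, or the closed half-space $\R^3_+$, all of which are underlying spaces of $3$-orbifold charts. I would further show that the topological type is constant across all blow-ups at a fixed point, using the connectedness of the tangent-cone space (see for instance \cite[Theorem 4.2]{CheegerJiangNaber}) together with topological stability for $\RCD(1,2)$ cross-sections; this step is relatively elementary since the homeomorphism types available are rigid. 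The outcome is a partition $X=X_{\mathrm{mfd}}\sqcup X_{\R\mathrm{P}^2}\sqcup X_{\partial}$ according to the type of blow-up.

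Next I would establish a recognition principle for each type. Points in $X_{\mathrm{mfd}}$ are topological $3$-manifold points by \autoref{thm:manrecRCD}. Points in $X_{\partial}$ should be identified with the $\RCD$-boundary of $X$ in the sense of \cite{BrueNaberSemolabdry}, and the slicing and good-Green-sphere machinery of BPS (\autoref{prop:Greensphere}, \autoref{prop:tame}, \autoref{prop:loccontr}) should be adapted to the boundary setting to show that sufficiently small good Green-balls at boundary points are homeomorphic to half-balls in $\R^3_+$; this is technically involved but does not require fundamentally new ideas. For points $x\in X_{\R\mathrm{P}^2}$, corresponding to isolated orbifold singularities, the strategy is to construct a local double cover $\pi\colon\widetilde U\to U$ around $x$ such that $\widetilde U$ is an $\RCD(-2,3)$ space whose unique preimage $\widetilde x$ of $x$ has $S^2$-type blow-ups, apply \autoref{thm:manrecRCD} to conclude that $\widetilde U$ is a topological $3$-manifold, and then pass to the $\mathbb{Z}/2$-quotient to obtain an orbifold chart $U\approx\R^3/\{\pm I\}$.

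The main obstacle is the construction and control of the local double cover in the $\R\mathrm{P}^2$-case. Since $X$ is not a priori locally simply connected on $\mathcal{S}_{\mathrm{top}}$, one cannot invoke the universal cover directly. A promising route is to exploit the simple-connectedness of good Green-balls (\autoref{prop:conS21c} and \autoref{rm:goodGBsimcon}), together with the fact that a sufficiently small good Green-ball at an $\R\mathrm{P}^2$-type point is close to $B_1(o)\subset C(\mathbb{RP}^2)\approx \R^3/\{\pm I\}$, to show that small \emph{punctured} good Green-balls around $x$ have fundamental group $\mathbb{Z}/2$; the local double cover would then be built as an inverse limit of the corresponding topological double covers over a nested sequence of good Green-balls shrinking to $x$. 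Verifying that this cover inherits the $\RCD(-2,3)$ structure (using that the $\RCD$ condition lifts to covers as in \cite{MondinoWei}) and that its blow-ups at $\widetilde x$ are indeed $\R^3$-cones is the core new input required. Once this is established, the three cases combine via the local constancy of type on small good Green-balls to yield the orbifold structure globally.
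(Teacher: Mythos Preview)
The statement you are attempting to prove is labelled a \emph{Conjecture} in the paper and is presented in Section~\ref{sec:OpenQuest} as an open problem; the paper contains no proof of it. The paper's only commentary on how one might approach it is the paragraph immediately following the conjecture, which identifies the two new cases to be handled (blow-ups of type $C(\mathbb{RP}^2)$ and $\R^3_+$) and singles out the boundary case as the \emph{more delicate} one, suggesting that it may require affirmative answers to \cite[Question~4.9]{KapovitchMondino} and \cite[Open Question~7.3]{BrueNaberSemolabdry}, both of which are themselves open.

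Your overall architecture---classify blow-up types, treat each stratum separately, and use a local double cover at $\mathbb{RP}^2$-type points---is a sensible blueprint and is broadly consistent with what the paper hints at. However, several steps are genuine gaps rather than routine adaptations. First, your assessment that the boundary case ``does not require fundamentally new ideas'' is directly contradicted by the paper: the good-Green-ball machinery of \cite{BruePigatiSemola} is developed for spaces with empty boundary, and extending \autoref{prop:Greensphere}, \autoref{prop:tame}, and \autoref{prop:loccontr} to boundary points presupposes a structure theory for the $\RCD$ boundary that is not currently available. Second, you assert that $\mathbb{RP}^2$-type points are isolated, but this is exactly what needs to be proved; nothing in the paper gives this a priori, and without it your inverse-limit construction of the local double cover has no starting point. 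Third, \autoref{prop:conS21c} and \autoref{rm:goodGBsimcon} are stated and proved only under the hypothesis $Y\approx S^2$; the slicing argument that yields simple connectedness of good Green-balls uses that the Green-spheres are $S^2$'s bounding the sublevel sets, and it does not transfer verbatim to the case where the Green-spheres are $\mathbb{RP}^2$'s. You would need an independent argument that punctured good Green-balls at $\mathbb{RP}^2$-type points have fundamental group exactly $\mathbb{Z}/2\mathbb{Z}$, and that the resulting topological double cover carries an $\RCD(-2,3)$ structure with the correct blow-up at the lift of $x$---none of which follows from the cited results.
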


To address \autoref{conj:Mondino}, one needs to handle blow-ups homeomorphic to $C(\mathbb{RP}^2)$, corresponding to interior orbifold points, or $\mathbb{R}^3_{+}$ (i.e., a half-space), corresponding to boundary points. The case of boundary points seems more delicate. In particular, to deal with it, it might be helpful to have positive answers to \cite[Question 4.9]{KapovitchMondino} and \cite[Open Question 7.3]{BrueNaberSemolabdry}.
\medskip

If \autoref{conj:Mondino} holds true, then it is very likely that the manifold assumption could be removed from the stability \autoref{thm:stability3d}. This would amount to settle in the affirmative the following:

\begin{conjecture}\label{conj:stability}
Let $(X_i,\dist_i,\haus^3)$ be $\RCD(-2,3)$ spaces. Assume that 
\begin{equation}\label{eq:limstab}
(X_i,\dist_i)\xrightarrow{\mathrm{GH}}(X,\dist)\, \quad \text{as $i\to\infty$}\, ,
\end{equation}
without collapse, for some compact $\RCD(-2,3)$ space $(X,\dist,\haus^3)$. Then there exists $i_0\in\N$ such that $X_i$ is homeomorphic to $X$ for every $i\ge i_0$.
\end{conjecture}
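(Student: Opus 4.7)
The plan is to reduce \autoref{conj:stability} to the topological-manifold case \autoref{thm:stability3d} by understanding and matching up the non-manifold sets $\mathcal{S}_i:=\mathcal{S}_{\rm top}(X_i)$ and $\mathcal{S}:=\mathcal{S}_{\rm top}(X)$. Working under the standing assumption that the metric-measure boundary of each $X_i$ and of $X$ is empty (which is preserved by the noncollapsing (no-)boundary stability), by \autoref{thm:manrecRCD} the only obstruction to being a topological manifold is the presence of points whose blow-up cross-sections are homeomorphic to $\mathbb{RP}^2$. As a first step I would prove that $\mathcal{S}$ (and each $\mathcal{S}_i$) is a discrete subset, with a uniform upper bound on its cardinality in a unit ball depending only on the noncollapsing constant. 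Discreteness should follow from \autoref{thm:manrecRCD} applied locally together with a local $\mathbb{Z}/2\mathbb{Z}$-branched-cover construction near each $C(\mathbb{RP}^2)$-point: the total space of the cover is a topological manifold by \autoref{thm:manrecRCD} and so cannot accumulate further non-manifold points, while the cardinality bound follows from Bishop--Gromov applied to the double cover.

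The second step is to show that $\mathcal{S}_i\to\mathcal{S}$ in the Hausdorff sense and that for $i$ large the cardinalities agree. One direction (no spontaneous appearance) follows from the uniform framework of \autoref{prop:Greensphere}: if $\overline{B}\cap \mathcal{S}=\emptyset$, then the good Green-spheres $\mathbb{S}_r(p_i)$ produced along the sequence converge without collapse to $\mathbb{S}_r(p)\approx S^2$, and \autoref{thm:manrecRCD} applied locally to the sublevel sets forces $\overline{B_i}\cap \mathcal{S}_i=\emptyset$ for $i$ large. The other direction (no loss) is more subtle: given $x\in \mathcal{S}$, I would produce $x_i\in \mathcal{S}_i$ with $x_i\to x$ by exploiting a $\mathbb{Z}/2\mathbb{Z}$-valued topological invariant of balls that is continuous along the convergence. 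Concretely, one can use the non-orientability of a good Green-sphere $\mathbb{S}_r(x)\approx \mathbb{RP}^2$ obtained from \autoref{prop:loccont3d}: for $r$ small enough and $i$ large the approximating good Green-spheres in $X_i$ converge to $\mathbb{S}_r(x)$ without collapse, so they must also be homeomorphic to $\mathbb{RP}^2$, forcing the interior Green-ball to contain a non-manifold point.

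Once $\mathcal{S}_i$ and $\mathcal{S}$ are matched, the third step is a local homeomorphic reduction. Around each $x\in \mathcal{S}$ the local fundamental group of a small punctured good Green-ball is $\mathbb{Z}/2\mathbb{Z}$, computable from the fact that a punctured $C(\mathbb{RP}^2)$ deformation-retracts onto $\mathbb{RP}^2$ together with \autoref{lemma:retract}. The associated $\mathbb{Z}/2\mathbb{Z}$-cover extends across $x$ to a local $\RCD(-2,3)$ space $\tilde{U}_x$ in which every blow-up cross-section is homeomorphic to $S^2$; hence $\tilde{U}_x$ is a topological manifold by \autoref{thm:manrecRCD}. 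The same construction applies to each $x_i$, yielding $\tilde{U}_{x_i}\to \tilde{U}_x$ without collapse. Applying \autoref{thm:stability3d} to the sequence of double covers and upgrading the homeomorphisms to $\mathbb{Z}/2\mathbb{Z}$-equivariant ones, using the freeness of the action on the complement of the lifted non-manifold point together with an equivariant version of Jakobsche's $\alpha$-approximation, one descends to a homeomorphism $U_{x_i}\to U_x$. On the complement of small neighbourhoods of $\mathcal{S}$ both $X$ and $X_i$ are topological manifolds, so \autoref{thm:stability3d} applies directly; a controlled collar-gluing along the boundary Green-spheres $\mathbb{S}_r(x_i)\approx \mathbb{RP}^2$ matches the two families of homeomorphisms.

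The main obstacle is Step 1, namely the discreteness of $\mathcal{S}$ for a general $\RCD(-2,3)$ space. The current Hausdorff dimension bound only yields $\dim_{\mathcal{H}}\mathcal{S}\le 1$, and it is not clear whether one-dimensional strata of non-manifold points can appear without contradicting \autoref{thm:manrecRCD}; settling this may well require a partial resolution of \autoref{conj:Mondino}, or at least a proof that the non-manifold set inside the class of $C(\mathbb{RP}^2)$-type singularities is isolated. A secondary obstacle is the equivariant upgrade of the $\alpha$-approximation theorem underlying \autoref{thm:stability3d}: while such an upgrade should be routine given the freeness of the $\mathbb{Z}/2\mathbb{Z}$-action off the single non-manifold point, an off-the-shelf equivariant version does not appear to be available in the literature and would have to be set up by hand.
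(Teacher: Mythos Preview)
The statement you are attempting, \autoref{conj:stability}, is presented in the paper as an \emph{open conjecture}; there is no proof to compare against. The paper's only indication of an approach is the sentence immediately preceding it: the author expects the conjecture to follow once \autoref{conj:Mondino} (the orbifold structure of general $\RCD(-2,3)$ spaces) is established. Your strategy is in the same spirit, and you correctly single out Step~1 --- discreteness of $\mathcal{S}_{\rm{top}}$ --- as the real obstruction; this is precisely the boundary-free instance of \autoref{conj:Mondino}, which the paper leaves open.

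Your sketched argument for Step~1 via a local $\mathbb{Z}/2\mathbb{Z}$-branched cover is, however, circular as it stands. To build the cover near a $C(\mathbb{RP}^2)$-point $x$ you need the local fundamental group of a punctured neighbourhood to be $\mathbb{Z}/2\mathbb{Z}$, and to verify via \autoref{thm:manrecRCD} that the cover is a manifold you need all blow-ups upstairs to have $S^2$ cross-section; both of these presuppose control on $\mathcal{S}_{\rm{top}}$ near $x$ that is unavailable without already knowing isolation. Two minor comments on the later steps: the ``no loss'' half of Step~2 is already recorded in the paper as \autoref{thm:secstab}, so you need not reprove it via Green-spheres; and in Step~3, \autoref{thm:stability3d} is stated for closed manifolds, so your collar-gluing would additionally require a relative (manifold-with-boundary) version of Jakobsche's $\alpha$-approximation, on top of the equivariant upgrade you already flag.
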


In the context of noncollapsed Gromov-Hausdorff convergence under lower sectional curvature bounds, cross-sections of blow-ups are topologically stable, as illustrated by Kapovitch's \cite{Kapovitchcross}. See, in particular, Theorem 5.1 and Remark 5.4 therein. The combination of \cite[Theorem 1.6]{BrueNaberSemolabdry} with \cite{BruePigatiSemola} shows that something similar holds for (synthetic) lower Ricci bounds in dimension $3$.

\begin{theorem}\label{thm:secstab}
Let $(X_i,\dist_i,\haus^3,p_i)$ be $\RCD(-2,3)$ spaces with empty boundary such that 
\begin{equation}
(X_i,\dist_i,\haus^3,p_i)\xrightarrow{\mathrm{pmGH}}(X,\dist,\haus^3,p)\, ,\quad \text{as $i\to\infty$}\, .
\end{equation}
Let $C(Y)$ be a blow-up of $(X,\dist)$ at $p$. There exist $q_i\in X_i$ such that the cross-sections of every blow-up of $(X_i,\dist_i)$ at $q_i$ are homeomorphic to $Y$.
\end{theorem}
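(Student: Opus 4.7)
The approach is to combine the manifold recognition \autoref{thm:manrecRCD} with a localized version of the stability \autoref{thm:stability3d}, through a topological dichotomy on $Y$. Since the $X_i$ have empty boundary and this condition passes to the limit $X$ by \cite[Theorem 1.6]{BrueNaberSemolabdry}, every cross-section of a blow-up (of $X$ or of any $X_i$) is a $2$-dimensional $\RCD(1,2)$ space with empty boundary; by \cite{LytchakStadler} it is a closed Alexandrov surface of curvature $\ge 1$, hence topologically either $S^2$ or $\mathbb{RP}^2$. Moreover \cite{LytchakStadler} gives that at any point of an $\RCD(-2,3)$ space all cross-sections of blow-ups are homeomorphic to one another, so \autoref{thm:manrecRCD} applied locally yields the following characterization: a point is a topological manifold point if and only if its blow-up cross-sections are homeomorphic to $S^2$.

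When $Y\approx S^2$ the theorem is easy. The Kapovitch-Mondino $\varepsilon$-regularity theorem combined with the metric Reifenberg theorem produces an open dense set $\mathcal{R}_\varepsilon(X_i)\subset X_i$ whose points are bi-H\"older homeomorphic to a smooth Riemannian $3$-manifold, hence are topological manifold points. I would pick $q_i\in\mathcal{R}_\varepsilon(X_i)$ with $q_i\to p$ along the pmGH approximations, which exist by density; by manifold recognition every blow-up at $q_i$ has $S^2\approx Y$ as cross-section.

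When $Y\approx\mathbb{RP}^2$, manifold recognition places $p$ in the non-manifold set $\mathcal{S}_{\rm{top}}(X)$. I would argue by contradiction: if no sequence $q_i\to p$ with $q_i\in\mathcal{S}_{\rm{top}}(X_i)$ existed, then up to a subsequence there would be $\delta>0$ with $B_\delta(p_i)\cap\mathcal{S}_{\rm{top}}(X_i)=\emptyset$, so $B_\delta(p_i)$ would be an $\RCD(-2,3)$ open topological $3$-manifold. A localized version of \autoref{thm:stability3d} applied to the noncollapsed GH convergence $B_\delta(p_i)\to B_\delta(p)$ would then force $B_{\delta/2}(p)$ to be a topological $3$-manifold, so $p$ would be a manifold point of $X$, contradicting the existence of the blow-up $C(\mathbb{RP}^2)$ at $p$. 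Hence the required $q_i$ exist, and by manifold recognition every blow-up at $q_i$ has $\mathbb{RP}^2\approx Y$ as cross-section.

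The main obstacle is the localization of \autoref{thm:stability3d}, whose present formulation requires compact ambient $\RCD(-2,3)$ spaces, whereas above it must be applied to open balls in pointed non-compact spaces. The required tools are essentially in hand: \autoref{thm:locunicontr} supplies uniform local linear contractibility of $B_\delta(p_i)$, and one can replace $B_\delta(p_i)$ by a compact good Green-domain $\overline{\mathbb{B}}_r(p_i)$ that is a compact $3$-manifold with tamely embedded $S^2$ boundary by \autoref{prop:tame} and \autoref{cor:manifoldwithboundary}. A relative version of Jakobsche's $3$-dimensional $\alpha$-approximation theorem, analogous to the one used in the proof of \autoref{thm:stability3d} but compatible with sphere boundaries, should then upgrade the $\varepsilon$-homotopy equivalences produced from the local contractibility (as in \autoref{prop:epsequiv}) to honest homeomorphisms. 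Making this relative stability rigorous is where the real work lies.
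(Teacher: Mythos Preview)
The paper does not supply an explicit proof of this theorem; it simply attributes the statement to the combination of \cite[Theorem~1.6]{BrueNaberSemolabdry} with the machinery of \cite{BruePigatiSemola}. Your dichotomy on $Y$ and the invocation of manifold recognition are the right framework, and the $S^2$ case is handled correctly.

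In the $\mathbb{RP}^2$ case, however, you take an unnecessary detour. You reduce to a localized version of \autoref{thm:stability3d} and then, honestly, flag that localization (and the attendant relative $\alpha$-approximation theorem) as unresolved --- so as written your argument has a gap. But the gap is avoidable with tools already on the table. Since $C(Y)$ is a blow-up at $p$, there is a scale $r>0$ at which $B_{20r}(p)$ is $\eps$-GH-close to the ball $B_{20r}(o)\subset C(Y)$; by pmGH convergence the same holds for $B_{20r}(p_i)$ once $i$ is large. Applying \autoref{prop:Greensphere} and \autoref{prop:loccont3d}(ii) to the rescaled $X_i$ at $p_i$ produces a good Green-sphere $\mathbb{S}_{r'}(p_i)\approx Y\approx\mathbb{RP}^2$. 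Now invoke the contrapositive of \autoref{cor:manifoldwithboundary}: were $\overline{\mathbb{B}}_{r'}(p_i)\subset X_i\setminus\mathcal{S}_{\rm top}(X_i)$, it would be a compact $3$-manifold with boundary $\mathbb{S}_{r'}(p_i)\approx\mathbb{RP}^2$, which is impossible since boundaries of compact $3$-manifolds have even Euler characteristic. Hence there exists $q_i\in\mathcal{S}_{\rm top}(X_i)\cap\overline{\mathbb{B}}_{r'}(p_i)$, and manifold recognition forces every blow-up cross-section at $q_i$ to be $\mathbb{RP}^2$. This route uses only the Green-ball tools and the Euler-characteristic obstruction already developed in the paper, and it bypasses the localized-stability issue entirely.
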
 
The examples discussed in Section \ref{subsec:alltop} clearly illustrate that the topology of cross-sections (of blow-ups) need not be stable under noncollapsed Gromov-Hausdorff convergence with lower Ricci bounds in dimensions larger than $4$. However, the singularities of the form $\R^{n-3}\times C(\mathbb{RP}^2)$ enjoy some form of stability, as shown by \autoref{thm:noRP2BPS}. We conjecture that \autoref{thm:noRP2BPS} generalizes to $\RCD$ spaces if suitably reformulated.

\begin{conjecture}\label{conj:noCRP2}
 Fix $n\ge 4$. Let $(X_i,\dist_i,\haus^n,p_i)$ be $\RCD(-\delta_i,n)$ spaces, with $\delta_i\to 0$, $\haus^n(B_1(p_i))>v>0$ and such that 
\begin{equation}\label{eq:pGH3sim}
(X_i,\dist_{i},p_i)\xrightarrow{\mathrm{pGH}}\R^{n-3}\times C(Y)\, ,\quad i\to \infty\, ,
\end{equation}
for some metric space $(Y,\dist_Y)$. Assume that each $(X_i,\dist_i)$ has no blow-up of the form $\R^{n-3}\times C(W)$ with $W\approx\mathbb{RP}^2$. Then $Y\approx S^2$ or $Y\approx \overline{D}^2$. If the $X_i$'s have empty boundaries, then only the first possibility can occur. 
\end{conjecture}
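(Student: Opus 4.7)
My approach would follow the blueprint used for the smooth ``no $C(\mathbb{RP}^2)$'' \autoref{thm:noRP2BPS}, namely slicing together with a local uniform contractibility argument that culminates in the observation that the boundary of a $3$-manifold has even Euler characteristic. The extra ingredient not present in the smooth case is the manifold recognition \autoref{thm:manrecRCD}, which is precisely the tool needed to promote metric information on level sets to topological rigidity in the absence of charts; the assumption that no blow-up of the form $\R^{n-3}\times C(\mathbb{RP}^2)$ appears is tailored to activate it. First I would establish an $\RCD$ analogue of the slicing \autoref{thm:slicingBPS}, producing approximations $(v_i,u_i):B_{10}(p_i)\to\R^{n-3}\times[0,\infty)$ of the limit function $(\pi_{\R^{n-3}},\dist_{\R^{n-3}\times\{o\}}(\cdot))$ with $v_i$ an almost splitting map and $u_i$ built from a local Green's function with pole at $p_i$; a positive-measure set of ``good values'' $(x,y)$ exhibits $(n-2)$ almost splitting directions at all scales along the slice, exactly as in \cite[Theorem 5.2]{BruePigatiSemola}. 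This step is standard in the smooth setting, but in the non-smooth setting for $n\ge 4$ it requires a Green's function theory with suitable decay and matrix-type almost splitting estimates; this is where the main technical work lies.

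Next I would repeat the rigidity argument of \autoref{lemma:Sigmaplane} to show that any noncollapsed $\R^{n-2}$-splitting limit with quadratic volume growth in the transverse direction is either homeomorphic to $\R^{n-2}\times\R^{2}$ or to $\R^{n-2}\times\R^{2}_+$, using Gigli's splitting stability, Lytchak--Stadler, and the (no-)boundary stability \cite[Theorem 6.2]{CheegerColding97I}. Combining this with the quantitative linear contractibility of Alexandrov surfaces with $\mathrm{Sect}\ge 0$ (Perelman \cite{Perelman93}) and a blow-up argument gives, exactly as in \autoref{prop:unifcontrslices}, the local $(C_0,\rho_0)$-contractibility of the good slices of $(v_i,u_i)$ inside $X_i$, with constants depending only on $v$. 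Along a generic sequence of good values, Petersen's homotopy stability then implies that the $2$-dimensional slices are homotopy equivalent to $Y$ for $i$ large.

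The decisive step is to show that the good $2$-slices are topological surfaces bounding topological $3$-manifolds inside $X_i$. For this, I would view the ``$(n-3)+3$ slicing'' as first slicing off the Euclidean $\R^{n-3}$-factor: the level set $\Sigma_i:=\{v_i=x_i\}$, endowed with the induced distance, inherits an approximate $\RCD(-\delta'_i,3)$ structure via the almost splitting control of $v_i$ on $B_{10}(p_i)$. The hypothesis of the conjecture directly implies that every blow-up of $\Sigma_i$ (at any point of $\Sigma_i$) is homeomorphic to $\R^3$: a bad blow-up with $C(\mathbb{RP}^2)$ cross-section would, by combining the almost product structure coming from the slicing with a diagonal argument, produce a blow-up of $X_i$ itself of the form $\R^{n-3}\times C(\mathbb{RP}^2)$. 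Applying the manifold recognition \autoref{thm:manrecRCD} to $\Sigma_i$ shows it is a topological $3$-manifold, hence the further $2$-slice $\{(v_i,u_i)=(x_i,y_i)\}$ is a tamely embedded closed surface inside $\Sigma_i$ bounding the compact $3$-manifold-with-boundary $\Sigma_i\cap\{u_i\le y_i\}$.

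The conclusion is then immediate: the even-Euler-characteristic principle forbids this bounded surface from being $\mathbb{RP}^2$, so $Y\approx S^2$; in the case with boundary the disk $\overline{D}^2$ is the only other option, and (no-)boundary stability excludes it when all $X_i$ have empty boundary. The main obstacle, as noted, is Step~1: the $n\ge 4$ $\RCD$ slicing theorem with control on Green's functions and splitting error terms at all scales. A second, more subtle point is to make rigorous the claim that $\Sigma_i$ carries an honest $\RCD(-\varepsilon_i,3)$ structure for the induced distance (so that \autoref{thm:manrecRCD} applies); if this proves to be too fragile, one can instead work with $\Sigma_i$ purely as a metric space with controlled blow-up behaviour and replace the appeal to \autoref{thm:manrecRCD} by a direct slicing reproduced one dimension lower, following \autoref{prop:Greensphere} and \autoref{prop:loccont3d}.
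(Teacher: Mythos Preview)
The statement you are attempting is labeled a \emph{conjecture} in the paper, and no proof is offered. The author writes explicitly that, beyond the technical challenge of extending the slicing \autoref{thm:slicingBPS} to $\RCD$ spaces, ``the present proof of \autoref{thm:noRP2BPS} hinges on the smoothness in a crucial way. As such, proving \autoref{conj:noCRP2} would require some new idea.'' Your proposal is, in essence, a transplant of the smooth argument; you correctly flag the slicing extension as work to be done, but you underestimate the second obstruction.

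The decisive gap is your treatment of the $3$-slice $\Sigma_i=\{v_i=x_i\}$. In the smooth proof, Sard's theorem guarantees that for a generic value $x_i$ the level set is a smooth embedded $3$-submanifold, so the $2$-slice bounds a compact $3$-manifold and the Euler-characteristic argument fires. In an $\RCD$ space there is no Sard-type mechanism, and level sets of almost splitting maps carry no a priori manifold or $\RCD$ structure. Your assertion that $\Sigma_i$ ``inherits an approximate $\RCD(-\delta'_i,3)$ structure via the almost splitting control of $v_i$'' is precisely the missing theorem: nothing in the literature produces a synthetic lower curvature bound on fibers of almost splitting maps, and there is reason to expect this is genuinely hard. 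Without it, \autoref{thm:manrecRCD} cannot be invoked. Your fallback---running \autoref{prop:Greensphere} and \autoref{prop:loccont3d} directly on $\Sigma_i$---fails for the same reason, since both of those results are formulated and proved only for honest $\RCD(-\eps,3)$ spaces. This bounding-a-$3$-manifold step is exactly where smoothness is ``crucial'' in the author's language, and bypassing it is the new idea the conjecture calls for rather than a detail to be filled in.
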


Besides the technical challenge in generalizing the slicing \autoref{thm:slicingBPS} to $\RCD$ spaces, the present proof of \autoref{thm:noRP2BPS} hinges on the smoothness in a crucial way. As such, proving \autoref{conj:noCRP2} would require some new idea. 
\medskip

We pointed out before that a (smooth) $3$-manifold $M^3$ admits a smooth Riemannian metric $g$ with $\Ric\ge 2$ if and only if it admits a distance $\dist$ compatible with the topology and such that $(M^3,\dist,\haus^3)$ is an $\RCD(2,3)$ space. We end the survey with the following:

\begin{conjecture}
There exist $n\ge 4$ and a smooth simply connected $n$-manifold $M^n$ such that:
\begin{itemize}
\item[i)] there is no smooth metric $g$ with $\Ric\ge n-1$ on $M^n$;
\item[ii)] there is a distance $\dist$ compatible with the topology of $M^n$ such that $(M^n,\dist,\haus^n)$ is an $\RCD(n-1,n)$ space.
\end{itemize}
\end{conjecture}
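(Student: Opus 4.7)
The plan is to take $n=4$ and exhibit $M^4$ as an exotic smooth structure on a del Pezzo surface, leveraging that the $\RCD(n-1,n)$ condition is purely a metric measure property, while the existence of a smooth metric with $\Ric\ge n-1$ is sensitive to the smooth structure in dimension four through Seiberg-Witten theory. Concretely, I would take $X^4:=\mathbb{CP}^2\#8\overline{\mathbb{CP}^2}$, viewed as a del Pezzo surface of degree one, and let $M^4$ be the Barlow surface, a minimal complex surface of general type with $p_g=q=0$ and $K^2=1$ that is homeomorphic but not diffeomorphic to $X^4$. Then $M^4$ is smooth, closed, and simply connected.

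For property (i), I would argue that $M^4$ admits no smooth Riemannian metric of positive scalar curvature. Since $M^4$ is a minimal Kähler surface of Kodaira dimension two, its Seiberg-Witten basic classes are nonzero; combining this with the chamber analysis in the $b_+=1$ case, Witten's curvature inequality together with LeBrun's refinements rule out smooth metrics of positive scalar curvature on $M^4$. In particular, no smooth metric on $M^4$ can satisfy $\Ric\ge 3$, since this would force the scalar curvature to be strictly positive.

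For property (ii), by Tian's theorem (and its refinement through Chen-Donaldson-Sun) on Kähler-Einstein metrics on del Pezzo surfaces, the standard smooth structure $X^4$ carries a Kähler-Einstein metric $g_0$ with $\Ric_{g_0}=\lambda g_0$, $\lambda>0$. A suitable rescaling produces a smooth Riemannian metric $g$ on $X^4$ with $\Ric_g\ge 3 g$, so that $(X^4,\dist_g,\haus^4)$ is an $\RCD(3,4)$ space. Let $\phi:M^4\to X^4$ be a homeomorphism, and set $\dist:=\phi^*\dist_g$. This distance is compatible with the topology of $M^4$, and $(M^4,\dist,\haus^4)$ is isomorphic as a metric measure space to $(X^4,\dist_g,\haus^4)$. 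Since the $\RCD$ condition is invariant under metric measure isomorphism, $(M^4,\dist,\haus^4)$ is $\RCD(3,4)$.

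The main obstacle, paradoxically, is not so much one of proof but of identification: one must pinpoint a simply connected closed $4$-manifold $M^4$ that simultaneously (a) is homeomorphic to a smoothable Fano manifold admitting a Kähler-Einstein metric with positive Einstein constant, and (b) carries a Seiberg-Witten obstruction to positive scalar curvature robust enough to apply across \emph{all} smooth Riemannian metrics, including subtleties of the $b_+=1$ regime. The Barlow surface provides the cleanest candidate, but the same scheme would equally apply to Akhmedov-Park exotica on $\mathbb{CP}^2\#k\overline{\mathbb{CP}^2}$ for suitable $k\in\{3,\dots,8\}$, where standard del Pezzo KE metrics are available without degree-one subtleties, or to Dolgachev surfaces on $\mathbb{CP}^2\#9\overline{\mathbb{CP}^2}$, provided one verifies that the standard smooth structure admits positive Ricci via Sha-Yang-type constructions.
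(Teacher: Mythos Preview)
The paper does not prove this statement: it is listed as an open \emph{conjecture} at the very end of the section on open questions, with no accompanying argument or sketch. There is nothing to compare your proposal against on the paper's side.

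That said, your proposal is a genuine and essentially correct strategy for resolving the conjecture. The logic of (ii) is unimpeachable: the $\RCD$ condition is a property of the metric measure space, so pulling back the K\"ahler--Einstein distance from the degree-one del Pezzo $X^4=\mathbb{CP}^2\#8\overline{\mathbb{CP}^2}$ via a homeomorphism $\phi:M^4\to X^4$ produces an $\RCD(3,4)$ structure on the Barlow surface $M^4$ compatible with its topology. Tian's 1990 result already covers the degree-one case, so Chen--Donaldson--Sun is not needed.

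For (i), your instinct about the $b_+=1$ subtlety is exactly right, and this is the only point requiring care. The clean reference that closes the gap is LeBrun, \emph{Kodaira dimension and the Yamabe problem}, Comm.\ Anal.\ Geom.\ \textbf{7} (1999), which proves that a compact complex surface with $b_1$ even admits a metric of positive scalar curvature if and only if its Kodaira dimension is $-\infty$. The Barlow surface is simply connected (hence K\"ahler) and of general type, so it admits no PSC metric, and a fortiori no smooth metric with $\Ric\ge 3$. LeBrun's argument does handle the $b_+=1$ chamber analysis you flag.

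Your alternative candidates are less clean: the Akhmedov--Park exotica are symplectic rather than complex, so LeBrun's theorem does not apply directly and one must argue via Taubes with the same $b_+=1$ wall-crossing issues; and for the Dolgachev option, positive Ricci on the standard $\mathbb{CP}^2\#9\overline{\mathbb{CP}^2}$ is not, to my knowledge, established by Sha--Yang-type methods. The Barlow surface is the right choice.
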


\frenchspacing

\end{document}